\newtheorem*{lemma}{Lemma}
\newtheorem*{prop}{Proposition}
\newtheorem*{thm}{Theorem}
\newtheorem*{cor}{Corollary}
\newcommand{\twoheaddownarrow}{\overset{\sim}{\twoheaddownarrow}}
\newcommand{\End}{\operatorname{End}}
\newcommand{\gr}{\operatorname{gr}}
\newcommand{\nc}{\newcommand}
\nc{\Ker}{\operatorname{Ker}} \nc{\rker}{\operatorname{rKer}}
\nc{\im}{\operatorname{Im}}
\nc{\stab}{\operatorname {Stab}}
\nc{\ann}{\operatorname {Ann}}
\nc{\Id}{\operatorname {Id}}
\nc{\Prim}{\operatorname {Prim}}
\nc{\Real}{\operatorname {Re}}
\nc{\Ext}{\operatorname {Ext}}
\nc{\rad}{\operatorname {rad}}
\begin{document}

\title [Weierstrass Sections]{Weierstrass Sections for Parabolic adjoint action in type $A$}
\author [Yasmine Fittouhi and Anthony Joseph]{Yasmine Fittouhi and Anthony Joseph\\
Donald Frey, Professional Chair\\
Department of Mathematics\\
The Weizmann Institute of Science\\
Rehovot, 76100, Israel}
\date{\today}
\maketitle

Key Words: Invariants, Parabolic adjoint action.

\

 AMS Classification: 17B35

 \

\textbf{Abstract}.

\

The notion of ``Weierstrass Section'', comes from Weierstrass canonical form for elliptic curves.  In celebrated work [B. Kostant, Lie group representations on polynomial rings, Amer. J. Math. 85 (1963), 327–-404] constructed such a section for the action of a semisimple Lie algebra on its dual using a principal $s$-triple.  Actually it is enough to have an ``adapted pair'' and indeed the construction in [A. Joseph and D. Shafrir, Polynomiality of invariants, unimodularity and adapted pairs, Transform. Groups 15 (2010), no. 4, 851–-882] works rather well for the coadjoint action of an algebraic, but not necessarily reductive Lie algebra.

In the present work a Weierstrass section is constructed for the adjoint action of the derived algebra of a parabolic subalgebra on its nilradical in type $A$.  The starting point is Richardson's theorem which implies the polynomiality of the invariant subalgebra.  Here adapted pairs seldom exist.  A new construction is developed and this is mainly combinatorial based on joining boxes in the Young tableau associated to the ``Richardson component''.  Indications are given for extending this construction in other types.

The construction has relations to quivers [T. Br\"ustle, L. Hille, Lutz, C.M. Ringel and G. R\"ohrle, The $\Delta$-filtered modules without self-extensions for the Auslander algebra of $k[T]/⟨Tn⟩$. Algebr. Represent. Theory 2 (1999), no. 3, 295–-312] and to hypersurface orbital varieties  [A. Joseph and A. Melnikov, Quantization of hypersurface orbital varieties in $\mathfrak {sl}(n)$. The orbit method in geometry and physics (Marseille, 2000), 165–-196, Progr. Math., 213].

\section{Definitions}\label{1}

The ground field is assume to be the complex numbers $\mathbb C$.  For every positive integer $n$, set $[1,n]=\{1,2,,\ldots\,n\}$.

\subsection{Weierstrass Sections}\label {1.1}

Let $P$ be a connected algebraic group (not necessarily reductive) acting by morphisms on a vector space $M$, necessarily finite dimensional.  Let $\mathfrak p$ be the Lie algebra of $P$.

Given $\mathscr V$ an algebraic variety let $\mathbb C[\mathscr V]$ denote the algebra of regular functions on $\mathscr V$.

A linear subvariety $e+V$ of $M$ is by definition the translate of a vector subspace $V$ of $M$ by an element $e \in M$.  Let $\varphi$ denote the morphism of $\mathbb C[M]^P$ into $\mathbb C[e+V]$ defined by restriction.

Following Popov and Vinberg \cite {PV}, we define a Weierstrass section for the action of $P$ on $M$ to be a linear subvariety $e+V$  of $M$ such that $\varphi$ is an isomorphism.

The algebra $\mathbb C[e+V]$ identifies with the symmetric algebra on $V^*$ and so is polynomial.  However the polynomiality of the invariant algebra $\mathbb C[M]^P$ is not sufficient to ensure the existence of a Weierstrass section (\ref {1.2}).

Geometrically the existence of a Weierstrass section means that a $P$ orbit in $M$ meets $e+V$ in at most one point.

\subsection{Regularity and Adapted Pairs}\label {1.2}

\subsubsection {}\label {1.2.1}

A $P$ orbit in $M$ is said to be regular if it has the minimal codimension.  An element of $M$ is said to be regular if it lies in a regular orbit.  Obviously if $e+V$ meets only regular orbits, then $e$ must be regular.

\

\textbf{Definition.} An adapted pair for the action of $P$ on $M$ is a pair $e \in M, h \in \mathfrak g$ such that $e$ is regular and $h$ acts reductively on $M$ with $h.e=-e$.

\

For this notion to be useful for constructing Weierstrass sections, one needs to ``truncate'' $P$, so that it admits no proper semi-invariants for its action on $M$.  This truncation is canonical for $\mathfrak g$ being almost algebraic but can also be defined in general \cite [Prop. 3.1] {OV}.  At the level of the Lie algebra $\mathfrak p$ of $P$, it just means cutting down the Cartan subalgebra of $\mathfrak p$ so that it exactly vanishes on the weights of the semi-invariants. On the other hand this truncation may eliminate the possibility of finding an adapted pair.

\subsubsection {}\label {1.2.2}

In general adapted pairs are hard to find. If $G$ is a reductive group acting on the dual of its Lie algebra, then an adapted pair is given by the first two elements of a principal $s$-triple.  For the action of a truncated parabolic (or even biparabolic) subalgebra in type $A$ on the dual of its Lie algebra, adapted pairs were constructed in \cite {J2}.  Yet outside type $A$, the truncated Borel generally has not enough (if any) semisimple elements, so adapted pairs cannot exist.  Yet outside types $B_{2n},C,F$ the truncated Borel does admit a Weierstrass section \cite [Thm. 9.4]{J3}.  This section can have some unusual properties \cite [11.4, Example 3]{J3}.  Thus it may contain non-regular elements and $P(e+V)$ need not be open in $M$.

In the remaining cases even the existence of a Weierstrass section is in doubt.  As noted in  \cite [11.4, Example 2]{J3} in type $C_2$ one cannot obtain a Weierstrass section at all because the invariant generators take the form $x,xy+z^2$, with $x,y,z,t$ being root vectors. Thus if $e+V$ is a proposed Weierstrass section $x,z$ should not vanish on $V$. Then even if $e$ is non-zero on $y$, we are left with the quadratic element $z^2$. Geometrically any orbit non-vanishing on $z$ meets $x+V$ at two points.

\subsubsection {}\label {1.2.3}

Given an adapted pair, let $V$ be an $h$-stable complement to $\mathfrak p.e$ in $M$. Obviously $\dim V$ must be the codimension of $P.e$ and hence of any regular $P$ orbit in $M$.   By construction $P.(e+V)$ is dense in $M$.  In particular the restriction map $\varphi$ of \ref {1.1} is injective.

\subsubsection {}\label {1.2.4}

Assume in the remainder of section \ref {1.2} that the action of $P$ on $M$ admits no proper semi-invariants.  Then the invariants in the fraction field of $\mathbb C[M]$ is just the fraction field of $\mathbb C[M]^P$. By a theorem of Chevalley (often referred to as a theorem of Rosenlicht \cite {R}) the former has transcendence degree $\dim V$.  Consequently $\mathbb C[e+V]$ is algebraic over the image of $\mathbb C[M]$ in $\mathbb C[e+V]$.

\subsubsection {}\label {1.2.5}

Assume that $\varphi$ is surjective and let $v_i:i \in I$ be a basis of $V$ consisting of $h$ eigenvectors of eigenvalue $m_i$.  Then $\varphi^{-1}(v_i)$ is a set of homogeneous generators of $\mathbb C[M]^P$ of degree $d_i:=m_i+1$.

\subsubsection {}\label {1.2.6}

Suppose that $\mathbb C[M]^P$ is polynomial.  Then the above relation between degrees of generators and eigenvalues implies the surjectivity of $\varphi$.  For coadjoint action one can do better,  \cite [Sect. 6.2 and Thm. 6.3]{JS}, as this condition relating degrees and eigenvalues is not needed, but is rather a consequence.

\subsubsection {}\label {1.2.7}

The surjectivity of $\varphi$ implies that the eigenvalues of $h$ on $V$ are non-negative.   Since $e$ has $h$ eigenvalue $-1$ and is regular, it follows that every element of $e+V$ is regular by \cite [7.8]{J3}.  This argument is quite standard but a little tricky since it needs the base field to be infinite.

\

\subsection{The Nilfibre}\label {1.3}

Let $\mathbb C[M]^P_+$ denote the subalgebra of $\mathbb C[M]^P$ spanned by the homogeneous invariants of positive degree.

\subsubsection{}\label {1.3.1}

The nilfibre $\mathscr N$ for the action of $P$ on $M$ is the zero variety of ideal $J$ generated by $\mathbb C[M]^P_+$ in $\mathbb C[M]$.  One may remark that $e,h$ is an adapted pair, then $e \in \mathscr N$.

\subsubsection{}\label {1.3.2}

Since $\mathbb C[M]^P_+$ is a $P$ submodule of $\mathbb C[M]$, it follows that $\mathscr N$ is $P$ stable. Since $P$ is connected, the irreducible components of $\mathscr N$ are also $P$ stable.

\subsubsection{}\label {1.3.3}

Assume that $P$ has no proper semi-invariants in $\mathbb C[M]$.

Let $e+V$ be a Weierstrass section for the action of $P$ on $M$.

We noted in \ref {1.2.2}, that $e+V$ may admit non-regular elements.

Now assume that $e+V$ has non-regular elements.

It need not be the case that $e+V$ meets every regular orbit.  Many examples \cite {J5} arise from the coadjoint action of a parabolic in type $A$.

A sufficient condition is for $e+V$ to meet every regular orbit is for $J$ to be a prime ideal \cite [Cor. 8.7]{J4}, the argument being due to Kostant. Remarkably Kostant \cite {K} showed that $J$ is prime if $\mathscr N$ is irreducible.  For an exposition see \cite [Thm. 8.1.3]{D}.

\subsubsection{}\label {1.3.4}

The (counter)-examples in \ref {1.3.3} all came from parabolic coadjoint action.

Here we intimate a study for parabolic adjoint action.  The invariants for these action can be \textit{very} different.  Yet there are some remarkable connections especially concerning the weights of the invariant generators \ref {3.5} and \ref {3.6}.

\section{Richardson's Theorem and its Consequences}\label{2}

\subsection{Richardson's Theorem}\label {2.1}

From now on $P$ denotes a parabolic subgroup of a simple connected simply-connected Lie group $G$.  Let $\mathfrak g$ (resp. $\mathfrak p$) denote the Lie algebra of $G$ (resp. $P$).

 One may write $\mathfrak p$ as a direct sum of its Levi factor $\mathfrak r$ and its nilradical $\mathfrak m$.  From now on $M$ denotes $\mathfrak m$.

Fix a Cartan subalgebra $\mathfrak h$ for the Lie algebra $\mathfrak g$ of $G$.  Let $\pi$ denote a choice of simple roots with respect to $\mathfrak h$. Then $P$ may be chosen so that $\mathfrak h \subset \mathfrak p$ and the Levi factor of $\mathfrak p$ is determined by a subset $\pi'$ of $\pi$.  In this case we write $\mathfrak p_{\pi'}$ for $\mathfrak p$.  When $\pi'$ is empty, $\mathfrak p_{\pi'}$ identifies with a Borel subalgebra $\mathfrak b$ and we denote by $\mathfrak n$ its nilradical.

Richardson's theorem asserts that $P$ admits a dense open orbit in $\mathfrak m$.

 Let $P'$ denote the derived group of $P$. Let $\mathfrak p'$ denote its Lie algebra.  Since $P'$ admits no non-trivial semi-invariants, it follows from Chevalley's theorem that the minimal codimension of a $P'$ orbit in $\mathfrak m$ is the number of generators of the polynomial algebra $\mathbb C[\mathfrak m]^{P'}$.

\subsection{Polynomiality}\label {2.2}

\subsubsection{}\label {2.2.1}
 Via Chevalley's theorem noted in \ref {1.2.4},  Richardson's theorem is equivalent to the fraction field of $\mathbb C[\mathfrak m]$ admitting no non-scalar invariant functions.  This is in turn equivalent to $\mathbb C[\mathfrak m]^{\mathfrak p'}$ being multiplicity-free as an $\mathfrak h$ module.  This follows by a simple argument using unique factorisation.

 \subsubsection{}\label {2.2.2}

 Since $\mathfrak p'$ is generated by ad-nilpotent elements and the base field has zero characteristic, it follows that factors of weight vectors in $\mathbb C[\mathfrak m]^{\mathfrak p'}$ are again weight vectors.  Thus $\mathbb C[\mathfrak m]^{\mathfrak p'}$ is generated by its weight vectors which are irreducible polynomials.  Since $\mathbb C[\mathfrak m]$ is factorial, so is $\mathbb C[\mathfrak m]^{\mathfrak p'}$ and being multiplicity-free further implies that its weight vectors, which are irreducible polynomials, are algebraically independent.  (This argument in a slightly less general form first appeared in \cite [Sect. 4]{J0}.  It was also given in detail in \cite [1.7]{FJ}.)

 \subsubsection{}\label {2.2.3}

 In particular $\mathbb C[\mathfrak m]^{\mathfrak p'}$ is polynomial and one can ask how to describe its generators explicitly.

\subsection{Orbital Varieties}\label {2.3}
\subsubsection {} \label {2.3.1}
Let $\mathscr O$ be a co-adjoint $G$ orbit.  Via the Killing form we can identify $\mathfrak g^*$ with $\mathfrak g$.  We say that $\mathscr O$ is a nilpotent orbit if $\mathscr O \cap \mathfrak n$ is non-empty.  In this case we have
\cite [Remark following Eq. (1)]{S}, the important dimension formula
$$ \dim \mathscr O \cap \mathfrak n = \frac{1}{2} \dim \mathscr O. \eqno {(1)}$$

The proof results from the Steinberg triple variety whose construction is mainly based on Bruhat decomposition.

\subsubsection {} \label {2.3.2}

Equation $(1)$ is all that is needed to give a proof of Richardson's theorem in a few lines.  This is perhaps well-known but as it emphasizes the role of orbital varieties, we give a proof anyway.

\begin {cor} $P$ admits a dense orbit in $\mathfrak m$.
\end {cor}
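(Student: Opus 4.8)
The plan is to deduce the existence of a dense $P$-orbit from Equation $(1)$ by a pure dimension count, the only substantive input being the factor $\tfrac12$ appearing there. First I would locate a candidate orbit. Since $\mathfrak m$ consists of nilpotent elements it lies in the cone $\mathcal N$ of nilpotent elements of $\mathfrak g$, which is a finite union of $G$-orbits. Hence $\mathfrak m=\bigcup_{\mathscr O}(\mathscr O\cap\mathfrak m)$ is a finite union over nilpotent orbits $\mathscr O$, and passing to closures exhibits $\mathfrak m$ as a finite union of the closed sets $\overline{\mathscr O\cap\mathfrak m}$. As $\mathfrak m$ is irreducible, one of these equals $\mathfrak m$; fix the corresponding orbit $\mathscr O$, so that $\mathscr O\cap\mathfrak m$ is dense in $\mathfrak m$ and $\dim(\mathscr O\cap\mathfrak m)=\dim\mathfrak m$.

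Next I would pin down $\dim\mathscr O$. On one hand $\mathscr O\subseteq G.\mathfrak m$, the image of the adjoint action map on $G\times^P\mathfrak m$; since $\dim G/P=\dim\mathfrak m$ this gives $\dim\mathscr O\le 2\dim\mathfrak m$. On the other hand $\mathfrak m\subseteq\mathfrak n$, so by the previous step together with $(1)$,
$$\dim\mathfrak m=\dim(\mathscr O\cap\mathfrak m)\le\dim(\mathscr O\cap\mathfrak n)=\tfrac12\dim\mathscr O.$$
The two inequalities force $\dim\mathscr O=2\dim\mathfrak m$. Since $\dim\mathfrak g=2\dim\mathfrak m+\dim\mathfrak r$, this is precisely the statement that $\dim\mathfrak g^x=\dim\mathfrak g-\dim\mathscr O=\dim\mathfrak r$ for every $x\in\mathscr O$.

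Finally I would convert this into a dense orbit. Take any $x\in\mathscr O\cap\mathfrak m$. Because $\mathfrak m$ is an ideal of $\mathfrak p$ one has $[\mathfrak p,x]\subseteq\mathfrak m$, so $\dim P.x=\dim[\mathfrak p,x]\le\dim\mathfrak m$. Conversely $\mathfrak p^x=\mathfrak p\cap\mathfrak g^x$ has dimension at most $\dim\mathfrak g^x=\dim\mathfrak r$, whence $\dim[\mathfrak p,x]=\dim\mathfrak p-\dim\mathfrak p^x\ge\dim\mathfrak p-\dim\mathfrak r=\dim\mathfrak m$. Therefore $[\mathfrak p,x]=\mathfrak m$ and $P.x$ is dense in $\mathfrak m$, as required. (One notes in passing that this shows $\mathscr O\cap\mathfrak m$ to be a single $P$-orbit.)

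The routine points are the bookkeeping identities $\dim G/P=\dim\mathfrak m$ and $\dim\mathfrak g=2\dim\mathfrak m+\dim\mathfrak r$. The one place where something genuinely happens is the middle step: the factor $\tfrac12$ of $(1)$ supplies the lower bound $\dim\mathscr O\ge 2\dim\mathfrak m$ and thereby the crucial identity $\dim\mathfrak g^x=\dim\mathfrak r$. I expect this to be the main obstacle, in the sense that without $(1)$ there is no elementary reason for the centraliser of $x$ in $\mathfrak p$ to be as small as $\dim\mathfrak r$, which is exactly what makes $[\mathfrak p,x]$ fill up all of $\mathfrak m$.
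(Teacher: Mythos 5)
Your proof is correct and follows essentially the same route as the paper: finiteness of nilpotent orbits plus irreducibility to select a dominant orbit, equation $(1)$ for the lower bound $\dim\mathscr O\geq 2\dim\mathfrak m$, the bound $\dim\mathscr O\leq\dim G/P+\dim\mathfrak m$ for the upper bound, and then the forced equality to conclude density of the $P$-orbit. Your final step via centralizers ($\dim\mathfrak p^x\leq\dim\mathfrak g^x=\dim\mathfrak r$, hence $[\mathfrak p,x]=\mathfrak m$) is just the Lie-algebra reformulation of the paper's inequality $(3)$, namely $\dim Gx\leq\dim G/P+\dim Px$, so the difference is only cosmetic.
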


\begin {proof}
After Dynkin, there are only finitely many nilpotent orbits. In particular $G\mathfrak m$ is a finite union of nilpotent orbits $\{ \mathscr O_i=Gx_i\}_{i=1}^k$.  Again $G$ and $\mathfrak m$ are connected.  Thus $G\mathfrak m$ is irreducible, so admits a unique dense orbit say $\mathscr O_1$.  Then
$$ \mathfrak m = \mathfrak m \cap G\mathfrak m = \cup_{i=1}^k(\mathfrak m \cap \mathscr O_i),$$
so by $(1)$ we obtain
$$\dim \mathfrak m = \max _i \dim (\mathfrak m\cap \mathscr O_i) \leq \max _i \dim (\mathfrak n\cap \mathscr O_i)= \frac{1}{2} \max _i \dim \mathscr O_i, \eqno {(2)}$$

Yet $\dim G/P= \dim \mathfrak m$ and $Px_i \subset \mathfrak m$. Consequently
$$\dim Gx_i \leq \dim G/P+\dim Px_i \leq 2\dim \mathfrak m, \forall i. \eqno {(3)}$$

So by a particular case of $(3)$ and by $(2)$ we obtain
$$\dim G\mathfrak m=\dim Gx_1\leq 2 \dim \mathfrak m\leq \max_i \dim \mathscr O_i=\dim G\mathfrak m, $$
so
$$\dim G\mathfrak m=2\dim \mathfrak m. \eqno {(4)}$$

  Again by $(3)$
$$\dim G\mathfrak m=\dim Gx_1\leq  \dim \mathfrak m +\dim Px_1.$$

Yet $Px_1 \subset \mathfrak m$ and so by $(4)$ we obtain
$$\dim Px_1=\dim \mathfrak m, \eqno {(5)}$$
as required.
\end {proof}

\subsubsection {}\label{2.3.3}

After Spaltenstein $ \dim \mathscr O \cap \mathfrak n$ is equidimensional \cite [Theorem]{S}.  Its irreducible components are called orbital varieties.  Again from the Steinberg triple variety it follows that $w \mapsto \overline {B(\mathfrak n \cap w \mathfrak n)}$ is a surjection of the Weyl group $W$ onto the set of orbital variety closures.  A proof can be found in \cite [Thm. 2.1.]{J01}

Richardson's theorem is closely associated with orbital varieties and as we have already noted, a proof derives from $(1)$.

\subsubsection {}\label{2.3.4}

Let $\mathfrak m$ be the nilradical of a parabolic.

\

\textbf{Definition.}  A hypersurface orbital variety in $\mathfrak m$, is an orbital variety closure which is a hypersurface in $\mathfrak m$.

\

 As noted in \cite [3.11]{JM} the ideal of definition of a hypersurface orbital variety is given by an irreducible polynomial which is $\mathfrak p$ semi-invariant.  Conversely the same analysis shows that an irreducible polynomial which is $\mathfrak p$ semi-invariant has as its zero variety a hypersurface orbital variety.

This gives the following result.  Recall the notation of \ref {2.1}.  In particular let $\pi'$  be the subset of $\pi$ defining $\mathfrak p_{\pi'}$ and hence its nilradical $\mathfrak m_{\pi'}$.

\begin {lemma} The number $N_{\pi,\pi'}$ of invariant (polynomial) generators of $S(\mathfrak m_{\pi'}^*)^{\mathfrak p'_{\pi'}}$ is just the number of hypersurface orbital varieties in $\mathfrak m_{\pi'}$.
\end {lemma}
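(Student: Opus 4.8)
The plan is to match the polynomial generators of $S(\mathfrak m_{\pi'}^*)^{\mathfrak p'_{\pi'}}$ one-for-one with the irreducible $\mathfrak p_{\pi'}$ semi-invariants in $S(\mathfrak m_{\pi'}^*)$, and then to invoke the correspondence recorded in \ref{2.3.4}. To lighten notation I suppress the subscript $\pi'$, writing $\mathfrak m,\mathfrak p,\mathfrak p'$. By \ref{2.2.2} the algebra $S(\mathfrak m^*)^{\mathfrak p'}$ is free on a finite set of homogeneous weight vectors $f_1,\dots,f_N$, each of which is irreducible as a polynomial in $S(\mathfrak m^*)$; thus $N=N_{\pi,\pi'}$ and it suffices to show that, up to scalars, the $f_i$ are precisely the irreducible $\mathfrak p$ semi-invariants.

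First I would record the elementary equivalence between $\mathfrak p$ semi-invariants and $\mathfrak p'$-invariant weight vectors. Indeed a semi-invariant satisfies $x.f=\lambda(x)f$ for a Lie algebra character $\lambda$ of $\mathfrak p$; since any such character annihilates $\mathfrak p'=[\mathfrak p,\mathfrak p]$, we get $f\in S(\mathfrak m^*)^{\mathfrak p'}$. Conversely, as $\mathfrak h\subset\mathfrak p$ normalises $\mathfrak p'$, the space $S(\mathfrak m^*)^{\mathfrak p'}$ is $\mathfrak h$-stable, and a $\mathfrak p'$-invariant $\mathfrak h$-weight vector is killed by $\mathfrak p'$ and scaled by the central torus $\mathfrak z$ with $\mathfrak p=\mathfrak p'\oplus\mathfrak z$, hence is a $\mathfrak p$ semi-invariant. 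So the irreducible $\mathfrak p$ semi-invariants are exactly the irreducible (in $S(\mathfrak m^*)$) weight vectors lying in $S(\mathfrak m^*)^{\mathfrak p'}$.

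Next I would use multiplicity-freeness. By \ref{2.2.1} the $\mathfrak h$-module $S(\mathfrak m^*)^{\mathfrak p'}$ is multiplicity-free, so each of its weight spaces is one-dimensional; since the monomials $f_1^{a_1}\cdots f_N^{a_N}$ already exhaust the weights, every homogeneous weight vector of $S(\mathfrak m^*)^{\mathfrak p'}$ is a scalar multiple of a single such monomial. If that monomial is irreducible in $S(\mathfrak m^*)$, then exactly one exponent equals $1$ and the rest vanish, whence the vector is proportional to some $f_i$. Together with the previous paragraph this shows that the irreducible $\mathfrak p$ semi-invariants are, up to scalar, exactly $f_1,\dots,f_N$, and these are pairwise non-proportional by algebraic independence.

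Finally, by \ref{2.3.4} the assignment sending a hypersurface orbital variety to its (irreducible, $\mathfrak p$ semi-invariant) defining polynomial is a bijection between hypersurface orbital varieties in $\mathfrak m$ and irreducible $\mathfrak p$ semi-invariants taken up to scalar. Combining this with the identification above yields $N_{\pi,\pi'}=N$, the number of hypersurface orbital varieties in $\mathfrak m$, as claimed. The substantive inputs — polynomiality, multiplicity-freeness and the geometric dictionary of \ref{2.3.4} — are all already in hand, so the only real care needed is the bookkeeping ``up to scalar'', ensuring that the defining polynomial of a hypersurface is determined up to a constant and that distinct hypersurfaces give non-proportional generators; this is where I expect the only (minor) friction.
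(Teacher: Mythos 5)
Your proposal is correct and follows essentially the same route as the paper: the paper's (very terse) proof consists precisely of the correspondence from \cite[3.11]{JM} between hypersurface orbital varieties and irreducible $\mathfrak p$ semi-invariants, combined with the facts from \ref{2.2.1}--\ref{2.2.2} that $S(\mathfrak m^*)^{\mathfrak p'}$ is polynomial, multiplicity-free as an $\mathfrak h$-module, and generated by weight vectors that are irreducible in $S(\mathfrak m^*)$. What you have done is simply make explicit the two steps the paper leaves implicit (the equivalence between $\mathfrak p$ semi-invariants and $\mathfrak p'$-invariant weight vectors, and the multiplicity-freeness argument showing the irreducible weight vectors are exactly the generators up to scalar), and both steps are carried out correctly.
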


\subsubsection {}\label{2.3.5}

In principle, the above lemma settles the
question posed in \ref {2.2.3}.  The only snag is that hypersurface orbital varieties are notoriously difficult to describe and to handle.  In type $A$ they were describe by Melnikov, the result being described in the rather long \cite [Sect. 2]{JM}, based on the lowering of numbered boxes inspired by the Robinson-Schensted algorithm.

From the parametrisation of orbital varieties by McGovern using Garfinkel domino tableaux, Perelman \cite {Pe} was able to describe hypersurface orbital varieties for the remaining classical types analogous to Melnikov's work for type $A$. Then in a tour de force she extended our results in \cite {JM} for $\mathfrak g$ classical. Yet it is extremely complicated even to state the results.  Again although she had some partial unpublished results on $E_6,F_4$, the remaining exceptional cases seem out of reach.

\section{Invariants}\label{3}

\subsection{The B-S Invariants}\label {3.1}

Concerning hypersurface orbital varieties in type $A$,  Benlolo and Sanderson \cite {BS} started the ball rolling by conjecturing the form of the required invariants in type $A$.  Surprisingly not noticed before, their presentation can be significantly simplified and then presented for all types.  This is explained in Sections \ref {3.2} and \ref {3.6}.

\subsection{A Reduction}\label{3.2}

Let $I$ be an index set for the set of connected components of $\pi'$.  Let $J \subset I$ be a proper subset. Let $\pi_J$ be a set of roots containing $\cup _{j \in J}\pi_j$ in $\pi$ and orthogonal to the roots in $\cup_{i \in I \setminus J}\pi_i$.

The containment   $\pi_J\supset\cup_{j\in J} \pi_j$ may be strict. On the other hand $\pi_J$ is strictly contained in $\pi\setminus \cup_{i \in I \setminus J}\pi_i$.

Let $\mathfrak g_{\pi_J}$ be the (proper) subalgebra of $\mathfrak g$ with simple root set $\pi_J$.  Let $\mathfrak p_J$ be the parabolic subalgebra of $\mathfrak g_{\pi_J}$ defined by $\cup_{j\in J} \pi_j=\pi_J\cap \pi'$ and let $\mathfrak m_J$ denote its nilradical.

Notice that for any vector space $M$ we may identify $\mathbb C[M]$ with the symmetric algebra $S(M^*)$ of $M^*$.

\begin {lemma}  One has $S(\mathfrak m^*_J)^{\mathfrak p_J'} \subset S(\mathfrak m^*)^{\mathfrak p'}$.
\end {lemma}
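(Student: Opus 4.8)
The plan is to realise $S(\mathfrak m_J^*)$ as a subalgebra of $S(\mathfrak m^*)$ and then to verify, one Lie-algebra generator at a time, that a $\mathfrak p_J'$-invariant is annihilated by all of $\mathfrak p'$. First I would record the inclusion $\mathfrak m_J\subseteq\mathfrak m$: a weight of $\mathfrak m_J$ is a positive root $\alpha\in\mathbb Q\pi_J$ whose support meets $\pi_J\setminus\pi'\subseteq\pi\setminus\pi'$, so $\alpha$ has support outside $\pi'$ and is a weight of $\mathfrak m$. Since $\mathfrak m$ is a sum of root spaces and $\mathfrak m_J$ is the sum of those attached to $\Delta(\mathfrak m_J):=\Delta_{\pi_J}^+\setminus\Delta_{\pi_J\cap\pi'}$, one gets an $\mathfrak h$-stable splitting $\mathfrak m=\mathfrak m_J\oplus\mathfrak m''$, hence $S(\mathfrak m^*)=S(\mathfrak m_J^*)\otimes S(\mathfrak m''^*)$ and a canonical inclusion identifying $S(\mathfrak m_J^*)$ with the functions depending only on the $\mathfrak m_J$-coordinates. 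Writing $y_\alpha$ for the coordinate dual to the root vector $x_\alpha$, so $y_\alpha$ has $\mathfrak h$-weight $-\alpha$, the subalgebra $S(\mathfrak m_J^*)$ is generated by $\{y_\alpha:\alpha\in\Delta(\mathfrak m_J)\}$. As both invariant rings are for Lie-algebra actions, and $\mathfrak p'=\mathfrak h'\oplus\bigoplus_\gamma\mathbb C x_\gamma$ with $\gamma$ running over $\Delta(\mathfrak p')=\Delta_{\pi'}\cup(\Delta^+\setminus\Delta_{\pi'}^+)$, it suffices to show any $f\in S(\mathfrak m_J^*)^{\mathfrak p_J'}$ is killed by $\mathfrak h'$ and by each $x_\gamma$.

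For the Cartan part, every weight of $f$ lies in $\mathbb Q\pi_J$, so $h_\beta\cdot f=0$ automatically for $\beta\in\cup_{i\in I\setminus J}\pi_i$ by the orthogonality $\pi_J\perp\cup_{i\in I\setminus J}\pi_i$, while for $\beta\in\cup_{j\in J}\pi_j=\pi_J\cap\pi'$ it is immediate since $h_\beta\in\mathfrak p_J'$. This is the first use of the orthogonality built into the definition of $\pi_J$. For the root vectors, recall that $x_\gamma$ acts on $\mathbb C[\mathfrak m]$ by a derivation with $x_\gamma\cdot y_\alpha$ of weight $\gamma-\alpha$; since the weights of $\mathfrak m^*$ are exactly $-\Delta(\mathfrak m)$, this is a scalar multiple of $y_{\alpha-\gamma}$ when $\alpha-\gamma\in\Delta(\mathfrak m)$ and is $0$ otherwise. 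When $\gamma$ is a root of $\mathfrak p_J'$, i.e. $\gamma\in\Delta_{\pi_J\cap\pi'}\cup\Delta(\mathfrak m_J)$, and $\alpha\in\Delta(\mathfrak m_J)$, then $\alpha-\gamma\in\mathbb Q\pi_J$, and if it lies in $\Delta(\mathfrak m)$ it is positive and not in $\Delta_{\pi'}\supseteq\Delta_{\pi_J\cap\pi'}$, hence in $\Delta(\mathfrak m_J)$; thus $x_\gamma$ preserves $S(\mathfrak m_J^*)$ and, the structure constants being those inherited from $\mathfrak g\supseteq\mathfrak g_{\pi_J}$, restricts there to the intrinsic $\mathfrak p_J'$-action, so $x_\gamma\cdot f=0$ by hypothesis.

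It remains to treat $\gamma\in\Delta(\mathfrak p')\setminus\Delta(\mathfrak p_J')$, and by the derivation property it is enough to prove $x_\gamma\cdot y_\alpha=0$, i.e. $\alpha-\gamma\notin\Delta(\mathfrak m)$, for every $\alpha\in\Delta(\mathfrak m_J)$. There are two cases. If $\gamma\in\Delta_{\pi'}\setminus\Delta_{\pi_J\cap\pi'}$, then because the components $\pi_i$ are mutually orthogonal $\gamma$ lies in some $\Delta_{\pi_i}$ with $i\in I\setminus J$, so $\gamma\in\mathbb Q\pi_i$ with $\pi_i\perp\pi_J\ni\alpha$. If instead $\gamma\in\Delta(\mathfrak m)\setminus\Delta(\mathfrak m_J)$, one checks using $\Delta_{\pi_J}\cap\Delta_{\pi'}=\Delta_{\pi_J\cap\pi'}$ that $\gamma\notin\Delta_{\pi_J}$, so $\gamma$ has strictly positive coefficient on some simple root outside $\pi_J$; then $\alpha-\gamma$ has a strictly negative coefficient there and so cannot lie in $\Delta^+\supseteq\Delta(\mathfrak m)$, settling this case at once.

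The first case is the main obstacle and I would isolate it as the combinatorial fact that for orthogonal subsets $S_1\perp S_2$ of $\pi$, and roots $\alpha\in\Delta_{S_1}^+$, $\gamma'\in\Delta_{S_2}^+$, neither $\alpha+\gamma'$ nor $\alpha-\gamma'$ is a root; applying it to $S_1=$(support of $\alpha$)$\subseteq\pi_J$, $S_2\subseteq\pi_i$, and to $\gamma'=\gamma$ or $\gamma'=-\gamma$ according to the sign of $\gamma$, gives $\alpha-\gamma\notin\Delta$. For the difference this is immediate: a root has all simple-root coefficients of one sign, whereas $\alpha-\gamma'$ has disjoint nonempty supports of opposite sign. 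For the sum, were $\alpha+\gamma'$ a root, orthogonality gives $\langle\alpha+\gamma',\alpha^\vee\rangle=2$, so the $\alpha$-root string forces $(\alpha+\gamma')-2\alpha=\gamma'-\alpha$ to be a root, again contradicting the sign property. Granting this, $\alpha-\gamma\notin\Delta(\mathfrak m)$ in the first case too, whence $x_\gamma\cdot f=0$; combined with the previous steps this shows $f\in S(\mathfrak m^*)^{\mathfrak p'}$, as required.
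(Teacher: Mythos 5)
Your proof is correct, and in outline it is the paper's proof: realise $\mathfrak m_J^*$ inside $\mathfrak m^*$ via root spaces, then show that a $\mathfrak p_J'$-invariant is annihilated by all of $\mathfrak p'$, using orthogonality of $\pi_J$ with $\cup_{i\in I\setminus J}\pi_i$ and disjointness of supports. The real difference is one of economy, and it is worth noting. Since the action of $\mathfrak p'$ on $S(\mathfrak m^*)$ is by derivations, the annihilator of a given $f$ is a Lie subalgebra, so it suffices to check the Lie-algebra \emph{generators} of $\mathfrak p'$, namely $x_\alpha$ for $\alpha\in\pi$ and $x_{-\alpha}$ for $\alpha\in\pi'$. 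This is exactly what the paper does: for a simple root $\alpha\in\pi\setminus\pi_J$ one has $[x_\alpha,x_{-\beta}]=0$ because $\alpha$ is not in the support of $\beta$; for $\alpha\in\cup_{i\in I\setminus J}\pi_i$ one has $[x_{-\alpha},x_{-\beta}]=0$ by orthogonality; and the remaining generators lie in $\mathfrak p_J'$, which kills $f$ by hypothesis. Because you instead verify annihilation by \emph{every} root vector $x_\gamma$ of $\mathfrak p'$, you are forced to treat non-simple $\gamma$, which is precisely what necessitates your root-string lemma (for orthogonal $S_1,S_2\subseteq\pi$, neither $\alpha+\gamma'$ nor $\alpha-\gamma'$ is a root) and the separate case $\gamma\in\Delta(\mathfrak m)\setminus\Delta(\mathfrak m_J)$; the generator reduction makes all of this unnecessary. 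Both arguments are sound, and your write-up has the merit of making explicit the compatibility of the ambient and intrinsic $\mathfrak p_J'$-actions on $S(\mathfrak m_J^*)$, which the paper passes over in silence; but the paper's observation that only simple root vectors need checking collapses the whole verification to three lines.
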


\begin {proof} Let $x_{-\beta}$ be a root vector in $\mathfrak m^*_J$. A simple root $\alpha \in \pi \setminus \pi_J$, cannot be in the support of $\beta$.  Hence $[x_\alpha,x_{-\beta}]=0$ and so $[x_\alpha,\mathfrak m^*_J]=0$.

 Again if $\alpha \in \cup_{i\in I\setminus J}\pi_i$, then $[x_{-\alpha},\mathfrak m^*_J]=0$, since the elements in $\cup_{i \in I\setminus J}\pi_i$  are orthogonal by assumption to those in $\pi_J$.

 Combined these prove the lemma.

\end {proof}

\textbf{Example}. Adopt the Bourbaki \cite {Bo} notation.  Suppose $\pi'=\pi \setminus \{\alpha_2\}$.  Then we can take $\pi_J=\{\alpha_1,\alpha_3\}$.

\subsection{Supplementary Generators}\label{3.3}

We call a supplementary generator for the pair $(\mathfrak g, \mathfrak p)$, a generator not lying in some $S(\mathfrak m^*_J)^{\mathfrak p_J'}$ obtained with $J$ defined as in \ref {1.3} above. We believe that the supplementary generators take a very special form, as suggested by type $A$.  This is discussed briefly in Sections \ref {3.5}, \ref {3.6}.

The supplementary generators already illustrate some classical results in invariant theory.  Thus let $n$ be a positive integer.  Take $\mathfrak g$ to be $n$ copies of $\mathfrak {sl}(2)$ acting on the (outer) $n$-fold tensor product $M$ of its the simple two dimensional $\mathfrak {sl}(2)$ module.  As is well-known $\mathbb C[M]^\mathfrak g$ is reduced to scalars if $n=1$, polynomial on one generator (a $2 \times 2$ minor) if $n=2$ and polynomial on one generator if $n=3$. These are supplementary generators when respectively $\pi'=\pi \setminus \{\alpha_2\}$ in type $A_3$ and when $\pi,\pi'$ are as in the example of \ref {3.2}.  For $n>3$, the invariant algebra is no longer polynomial and there is at the same time no choice of the pair $\pi,\pi'$ which via the Richardson theorem would predict polynomiality.  In addition we remark that for $n=3$, the generator is of degree $4$ on $8$ generators and in our opinion most easily computed using the technique described in \ref {3.6.2}.  It degenerates to a supplementary generator in type $G_2$, by identification of the variables in each of four pairs.

%There is another interesting aspect of the example of \ref {3.2}.  The single invariant generator for the (coadjoint) action of $\mathfrak p'$ on $\mathfrak m^*$ is the highest root vector and its weight is \textit{half} of the  Kostant weight $\varpi_2-w_0\varpi_2$, whilst the single invariant generator for the action of $\mathfrak p'$ on $\mathfrak p$ is the element mentioned above.  It has degree $4$ and its weight is the minus the Kostant weight $\varpi_2-w_0\varpi_2$.

\subsection{Supplementary Generators in Type $A$}\label{3.4}
\subsubsection {}\label{3.4.1}

Recall that in type $A_{n-1}$, that is to say for $\mathfrak {sl}(n)$, the set of conjugacy classes of parabolic subgroups is given by the set of all sequences $n_1,n_2,\ldots,n_r$, where the $n_i$ are positive integers summing to $n$ (but not necessarily decreasing).  The Levi factor $\mathfrak r$ of $\mathfrak p$ is given as sequence of blocks of size $\{n_i\}_{i=1}^r$ going down the diagonal.

\subsubsection {}\label{3.4.2}

As is well-known the Levi factors of different parabolics  are conjugated by an (easily calculable)  element of $W$ if and only if the corresponding sets $n_i:i \in I$ coincide up to ordering - that is define the same partition of $n$.

\subsubsection {}\label{3.4.3}

One may remark that this element of the Weyl group does  \textit{not} leave the nilradical $\mathfrak m$ invariant. Moreover the invariant generators in $S(\mathfrak m^*)$ can be very different. Yet the description of the hypersurface orbital varieties in type $A$ by Melnikov combined with Lemma \ref {2.3.4}, shows that the number of polynomial generators in $S(\mathfrak m^*)$ is the same.

\subsubsection {}\label{3.4.4}

The above very remarkable result has the following explanation which carries over for all $\mathfrak g$ semisimple.

Let $\mathscr V$ be a closed $\mathfrak h$ stable subvariety of $\mathfrak n^+$, for example an orbital variety closure. Recall \cite [2.4,2.5] {J00} that the growth rate of $\mathbb C[\mathscr V]$  relative to an element of $\mathfrak h$ defines a function $r_\mathscr V$ on $\mathfrak h$.

When the $\mathscr V$ are the irreducible components of some $\mathfrak n \cap \mathscr  O$, these functions have some remarkable properties.  They are linearly independent and span a simple module $M_\mathscr O$ of $W$, which gives a concrete realisation of that part of the Springer representation corresponding to $\mathscr O$ together with the trivial representation of certain stabilizers of the component group of the centralizer in $G$ of any element in $\mathscr O$.  (One may remark that Springer obtained his representations via etale cohomlogy of fixed point sets in the flag variety.) This follows from the truth of \cite [Conj. 9.8]{J00} which was first established by Hotta \cite {H} and also in \cite [5.8]{J000}, a work which followed closely ideas of W. Rossmann using orbital integrals.   We remark that for type $A$ the proof does not need the high-brow luxury of etale cohomology or orbital integrals.  Indeed it follows from the fact the function $\Pi'$ (defined in \ref {3.4.5} below) generates the Specht module, which is simple and all Specht modules are obtained exactly once taking parabolics with non-conjugate Levi factors.

Let $\Delta$ (resp. $\Delta^+$) be the set of non-zero (resp. positive) roots and set $\Pi:=\prod_{\alpha \in \Delta^+} \alpha$.  Then we can write $r_\mathscr V=p_\mathscr V/\Pi$, where $p_\mathscr V$ is a homogeneous polynomial on $\mathfrak h$, whose degree equals $|\Delta^+|-\dim \mathscr V$.  We call $p_\mathscr V$ the characteristic polynomial of $\mathscr V$.

This assertion follows from \cite [Cor. 2.4]{J00} which easily generalises to give in particular.

\begin {lemma}  Let $\mathfrak  m$ be a subspace of $\mathfrak n$ spanned by a subset $x_\alpha:\alpha \in \Pi_\mathfrak m$ of root vectors. Let  $\mathscr V$ be a closed $\mathfrak h$ stable subvariety of $\mathfrak m$.  Then $r_\mathscr V=s_\mathscr V/\prod_{\alpha \in \Pi_\mathfrak m}\alpha$, where $s_\mathscr V$ is a homogeneous polynomial whose degree is the codimension of $\mathscr V$ in $\mathfrak m$.
\end {lemma}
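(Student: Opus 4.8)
The plan is to run the argument of \cite[Cor. 2.4]{J00} essentially verbatim, replacing the roles played there by $\mathfrak n$ and $\Delta^+$ with $\mathfrak m$ and $\Pi_\mathfrak m$. The key point is that $r_\mathscr V$ is intrinsic to the $\mathfrak h$-algebra $\mathbb C[\mathscr V]$: recall from \cite[2.4,2.5]{J00} that $r_\mathscr V(h)$ records the leading term in the growth of the weight multiplicities of $\mathbb C[\mathscr V]$ measured by $h\in\mathfrak h$, equivalently the leading pole of the formal character of $\mathbb C[\mathscr V]$. Since this data makes no reference to an ambient space, the only features of $\Delta^+$ actually used are that $\{x_\alpha:\alpha\in\Delta^+\}$ is a weight basis whose weights are confined to an open half-space; the set $\Pi_\mathfrak m$ enjoys exactly these properties relative to $\mathfrak m$, so the proof transports.

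Concretely, I would first observe that $\mathbb C[\mathscr V]$ is a finitely generated graded quotient of $S(\mathfrak m^*)$, the latter being the polynomial ring on the weight vectors dual to $x_\alpha:\alpha\in\Pi_\mathfrak m$. Hence its formal character is a rational function
\[
\ch\,\mathbb C[\mathscr V]=\frac{N(e^{-\alpha})}{\prod_{\alpha\in\Pi_\mathfrak m}(1-e^{-\alpha})},
\]
where $N$ is a finite $\mathbb Z$-combination of exponentials $e^{-\mu}$, $\mu$ in the root lattice. To read off $r_\mathscr V$ I would substitute $e^{-\alpha}\mapsto e^{-s\alpha(h)}$ and expand as $s\to 0$: each factor $1-e^{-s\alpha(h)}=s\,\alpha(h)+O(s^2)$, so the denominator contributes $s^{|\Pi_\mathfrak m|}\prod_{\alpha\in\Pi_\mathfrak m}\alpha(h)$ to lowest order, while the numerator contributes $s^{c}\,s_\mathscr V(h)+O(s^{c+1})$, where $c$ is the order of vanishing of $N$ at the point where all $e^{-\alpha}=1$ and $s_\mathscr V(h)$ is the corresponding leading Taylor coefficient in the direction $(\alpha(h))_{\alpha\in\Pi_\mathfrak m}$. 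Cancelling powers of $s$ yields $r_\mathscr V=s_\mathscr V/\prod_{\alpha\in\Pi_\mathfrak m}\alpha$, and $s_\mathscr V$ is homogeneous of degree $c$ by construction, since rescaling $h\mapsto\lambda h$ scales the direction vector by $\lambda$.

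The main obstacle is to identify $c$ with the codimension $\dim\mathfrak m-\dim\mathscr V$ of $\mathscr V$ in $\mathfrak m$. For $h$ in the locus where $\alpha(h)>0$ for all $\alpha\in\Pi_\mathfrak m$, the assignment $x_\alpha\mapsto\alpha(h)$ makes $\mathbb C[\mathscr V]$ a genuinely positively graded algebra with finite-dimensional graded pieces, whence standard Hilbert-series dimension theory gives that the pole order of the character equals the Krull dimension $\dim\mathscr V$; equivalently $N$ vanishes to order $|\Pi_\mathfrak m|-\dim\mathscr V$, i.e. $c=\dim\mathfrak m-\dim\mathscr V$. This is the one step with real content. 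The resulting identity of leading coefficients holds on the dense positive locus, so $s_\mathscr V$ extends uniquely to a homogeneous polynomial on all of $\mathfrak h$. As a consistency check, comparison with \cite[Cor. 2.4]{J00} applied to $\mathscr V\subset\mathfrak n$ forces $p_\mathscr V=s_\mathscr V\prod_{\alpha\in\Delta^+\setminus\Pi_\mathfrak m}\alpha$, so that $\prod_{\alpha\in\Delta^+\setminus\Pi_\mathfrak m}\alpha$ divides $p_\mathscr V$; this divisibility, not obvious a priori, is automatic from the intrinsic computation above.
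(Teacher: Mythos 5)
Your proposal is correct and takes essentially the same approach as the paper: the paper's entire proof is the remark that \cite[Cor. 2.4]{J00} ``easily generalises'', and your character-theoretic argument (rational character with denominator $\prod_{\alpha\in\Pi_\mathfrak m}(1-e^{-\alpha})$, identification of the pole order at $s=0$ with $\dim\mathscr V$ via Hilbert--Serre, leading Taylor coefficient giving the homogeneous polynomial $s_\mathscr V$) is precisely that generalisation carried out in detail. The only difference is one of explicitness, not of method.
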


\subsubsection {}\label{3.4.5}

Now set $\Delta^{\prime +}=\Delta^+\cap \mathbb N\pi'$,   $\Pi':=\prod_{\alpha \in \Delta^{\prime +}}\alpha$
and $\Pi'_-:=\prod_{\alpha \in \Delta^+\setminus \Delta^{\prime+}}\alpha=\Pi/\Pi'$.

Recall that $\mathfrak m _{\pi'}$ is an orbital variety closure, called the Richardson component.  Moreover  $p_{\mathfrak m_{\pi'}}=\Pi'$, equivalently $q_{\mathfrak m_{\pi'}}=1/\Pi'_-$.

\subsubsection {}\label{3.4.6}

\begin {lemma} Let $\mathscr V$ be a closed irreducible $\mathfrak h$ stable subvariety of $\mathfrak n$ and let $r_\mathscr V$ be its characteristic function.  Then $r_\mathscr V \Pi'_-$ is polynomial
 if and only if $\mathscr V \subset \mathfrak m_{\pi'}$.
\end {lemma}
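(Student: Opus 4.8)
The plan is to convert the statement into a divisibility of polynomials on $\mathfrak h$ and then treat the two implications separately. Since $r_\mathscr V=p_\mathscr V/\Pi$ and $\Pi=\Pi'\Pi'_-$, one has $r_\mathscr V\Pi'_-=p_\mathscr V/\Pi'$, so $r_\mathscr V\Pi'_-$ is polynomial if and only if $\Pi'$ divides $p_\mathscr V$ in $S(\mathfrak h^*)$. As $\Pi'=\prod_{\alpha\in\Delta^{\prime +}}\alpha$ is a product of pairwise non-proportional irreducible linear forms, this is equivalent to: $\alpha$ divides $p_\mathscr V$, i.e. $p_\mathscr V$ vanishes on $\ker\alpha\subset\mathfrak h$, for every $\alpha\in\Delta^{\prime +}$. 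The ``if'' direction is then immediate from the Lemma of \ref{3.4.4}: taking there $\mathfrak m=\mathfrak m_{\pi'}$, spanned by the $x_\alpha$ with $\alpha\in\Delta^+\setminus\Delta^{\prime +}$, that lemma yields $r_\mathscr V\prod_{\alpha\in\Delta^+\setminus\Delta^{\prime +}}\alpha=r_\mathscr V\Pi'_-$ polynomial whenever $\mathscr V\subset\mathfrak m_{\pi'}$.

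For the converse I would argue the contrapositive: assuming $\mathscr V\not\subset\mathfrak m_{\pi'}$, I must produce some $\beta\in\Delta^{\prime +}$ with $\beta\nmid p_\mathscr V$. Set $S=\{\gamma\in\Delta^{\prime +}:x_{-\gamma}|_\mathscr V=0\}$; then $\Delta^{\prime +}\setminus S$ is non-empty, and I choose $\beta$ minimal in the usual order among the surviving Levi roots $\Delta^{\prime +}\setminus S$. Since $\mathscr V$ lies in the coordinate subspace $W_S:=\{x_{-\gamma}=0:\gamma\in S\}$, a second application of the Lemma of \ref{3.4.4}, now to $W_S$, factors $p_\mathscr V=\bigl(\prod_{\gamma\in S}\gamma\bigr)\,p^{W_S}_\mathscr V$, where $p^{W_S}_\mathscr V$ is the characteristic polynomial of $\mathscr V$ computed inside $W_S$; as $\beta\notin S$ it now suffices to show $\beta\nmid p^{W_S}_\mathscr V$.

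The decisive combinatorial point is that $\beta$ is indecomposable among the coordinate weights of $W_S$. Indeed any relation $\beta=\gamma_1+\cdots+\gamma_k$ with $k\geq 2$ and $\gamma_i\in\Delta^+$ forces each $\gamma_i$ to be supported on $\operatorname{supp}(\beta)\subset\pi'$, whence $\gamma_i\in\Delta^{\prime +}$ with $\gamma_i<\beta$; by minimality of $\beta$ these $\gamma_i$ lie in $S$ and so are not coordinates of $W_S$. Consequently the only weight vector of weight $-\beta$ in $\mathbb C[W_S]$ is $x_{-\beta}$ itself, so $\mathbb C[\mathscr V]_{-\beta}=\mathbb C\,\overline{x_{-\beta}}$ is one-dimensional and non-zero. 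From here I would conclude $\beta\nmid p^{W_S}_\mathscr V$ via the positive degeneration description of the characteristic polynomial underlying \ref{3.4.4}: writing $p^{W_S}_\mathscr V=\sum_j m_j\prod_{\alpha\in A_j}\alpha$ with $m_j>0$, the sum running over the top-dimensional coordinate subspaces $L_j$ of an $\mathfrak h$-stable degeneration of $\mathscr V$ in $W_S$ and $A_j$ the roots of the coordinates vanishing on $L_j$, the indecomposability of $\beta$ prevents $x_{-\beta}$ from degenerating into a monomial in the other coordinates, so $\beta\notin A_{j_0}$ for some top $L_{j_0}$.

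The hard part will be this last passage, from ``$x_{-\beta}$ survives and is indecomposable'' to ``$\beta\nmid p^{W_S}_\mathscr V$'', since the positive terms of $\sum_j m_j\prod_{\alpha\in A_j}\alpha$ could a priori combine into a multiple of $\beta$ even when one summand is not. When $\beta$ is simple this is transparent: evaluating $p_\mathscr V$ at a point $\zeta$ in the relative interior of the wall $\ker\beta\cap\overline{C^+}$, where every positive root other than $\beta$ is strictly positive, makes each surviving term strictly positive, so they cannot cancel and $p_\mathscr V(\zeta)\neq 0$. For non-simple $\beta$ no such wall point exists, and it is exactly the reduction to $W_S$ together with the indecomposability of $\beta$ that must rule out the cancellation; making this ``no cancellation'' rigorous, equivalently verifying that the indecomposable coordinate $x_{-\beta}$ furnishes a genuine top-dimensional component off $\{x_{-\beta}=0\}$ in the degeneration, is the step I expect to demand the most care.
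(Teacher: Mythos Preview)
Your overall strategy coincides with the paper's: both directions hinge on rewriting $r_\mathscr V\Pi'_-=p_\mathscr V/\Pi'$, invoking Lemma \ref{3.4.4} for sufficiency, and for necessity using the chain formula \cite[Cor.~8.3]{J000} expressing $p_\mathscr V$ as a positive integer combination of products $\prod_{i:C_i=C_{i+1}}\alpha_i$ of pairwise distinct positive roots, together with an indecomposability argument on roots of $\Delta^{\prime +}$.

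The difference lies in how the hard step---your acknowledged ``no cancellation'' problem---is organised. You argue by contrapositive, fixing a single minimal $\beta\in\Delta^{\prime+}\setminus S$ and trying to show $\beta\nmid p^{W_S}_\mathscr V$; you then get stuck precisely because a positive sum of monomials, one of which omits $\beta$, might still be divisible by $\beta$. The paper instead argues the direct implication and strengthens the induction hypothesis to a \emph{term-by-term} statement: assuming $\Pi'\mid p_\mathscr V$, one shows by induction on the order $|\alpha|$ that every $\alpha\in\Delta^{\prime +}$ divides \emph{every} summand of the chain formula, not merely their sum. Having established this for all $\alpha$ of order $\le m$, one factors them out of each summand; the residual summands are then products of roots in $T:=\Delta^+\setminus\Delta^{\prime +}_{\le m}$. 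A root $\alpha\in\Delta^{\prime +}$ of order $m+1$ cannot be written as a nontrivial non-negative combination of elements of $T\setminus\{\alpha\}$ (any such decomposition would involve roots supported in $\pi'$, hence in $\Delta^{\prime +}$, of order $\le m$, already removed). This indecomposability is exactly the property that made your wall-point argument work for simple roots, and it works again here: one finds $\zeta\in\ker\alpha$ with $\beta(\zeta)>0$ for all $\beta\in T\setminus\{\alpha\}$, so each residual summand not containing $\alpha$ is strictly positive at $\zeta$ and no cancellation can occur. Thus $\alpha$ divides every summand, and the induction closes. Finally, since the $\Delta^{\prime +}$-roots are placed first in the chain ordering, $C_i=C_{i+1}$ for each of them forces $\mathscr V=C_1\subset\bigcap_{\alpha\in\Delta^{\prime +}}\mathfrak m_\alpha=\mathfrak m_{\pi'}$.

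Your reduction to $W_S$ in fact achieves the same discarding (your minimality gives $\Delta^{\prime +}_{<\beta}\subset S$), so your framework is not wrong; what is missing is the observation that indecomposability of $\beta$ among the surviving roots furnishes the required separating point $\zeta$, after which the positivity of the coefficients in the chain formula dispatches the cancellation worry exactly as in the simple-root case.
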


Sufficiency follows from Lemma \ref {3.4.4}.

Necessity follows from \cite [Cor. 8.3]{J000}.  In detail suppose that $r_\mathscr V \Pi'_-$ is polynomial.  This means that if we write $r_\mathscr V=p_\mathscr V/\Pi$, then

\

$(*)$. The roots in $\Delta^{\prime +}$ divide $p_\mathscr V$.

\

Now \cite [Cor. 8.3]{J000} states that $p_\mathscr V$ is a sum with non-negative integer coefficients of a product of the form
$\prod_{i|C_i=C_{i+1}}\alpha_i$, where the $\alpha_i$ are \textit{pairwise distinct positive roots}.

In the above, the notation is as follows.  Fix an ordering $\{\alpha_1,\alpha_2\ldots, \alpha_r\}$ of the positive roots.  Let $\mathfrak m_i$ be the uniquely determined $\mathfrak h$ stable complement to $\mathbb C x_{\alpha_i}$ in $\mathfrak n$.  Define a chain of closed irreducible $\mathfrak h$ stable subvarieties $\{C_i\}_{i=1}^r$ of` $\mathfrak n$ inductively by setting $C_1=\mathscr V$ and taking $C_{i+1}$ to be an irreducible component of $C_i\cap \mathfrak m_i$.   The above sum is over all chains.  By Krull's theorem $C_{i+1}=C_i$ or has codimension $1$ in $C_i$.  Thus the above sum is a homogeneous polynomial whose degree is the codimension of $\mathscr V$ in $\mathfrak n$.

Now given a positive root $\alpha$, let $|\alpha|$ denote the sum of the coefficients when written as a sum of simple roots.  It is called the order of $\alpha$. Notice that the roots in the products can be taken in any order \cite [First line of 8.3]{J000}. We fix them by first taking then to belong to $\Delta^{\prime +}$ and then so that $\alpha \mapsto |\alpha|$ is increasing.

We claim that

\

\ $(*)$ implies that $\alpha \in \Delta^{\prime +}$ divides every factor in the above sum describing $p_\mathscr V$.

\

Indeed this is already clear for a simple root since such a root cannot be written as a non-trivial sum of positive roots with non-negative coefficients. These simple roots can then be discarded from each factor.

Now fix a positive integer $n$ and assume we have shown that for all $m \in [1,n]$ that we can discard every root $\alpha \in \Delta^{\prime +}$ of order $\leq m$, from every factor occurring in the above sum. Take $\alpha \in \Delta^{\prime +}$ of order $m+1$. Then it can only be written non-trivially as positive roots with positive coefficients if these roots lie in $\Delta^{\prime +}$ and hence are of order $\leq m$.  However all such roots have been discarded and so our hypothesis is extended to $m+1$.  This proves our claim.

The condition $C_i=C_{i+1}$  means that $C_i\cap \mathfrak m_i=C_i$ equivalently that $C_i \subset \mathfrak m_i$.

We conclude from the claim and the formula in  \cite [Cor. 8.3]{J000}, that $\mathscr V \subset \cap_{\alpha \in \Delta^{\prime +}} \mathbb C\mathfrak m_\alpha$. Yet the right hand side is just $\mathfrak m_{\pi'}$, proving the converse assertion.

\subsubsection {}\label{3.4.7}

Fix an orbital variety closure $\mathscr V$ and a simple root $\alpha$.  Then either $\mathscr V \subset \mathfrak m_\alpha$ or this fails.  In the first case $s_\alpha p_\mathscr V= - p_\mathscr V$.  In the second case $(s_\alpha +1)p_\mathscr V$ is a sum with non-negative integer coefficients of characteristic polynomials $p_{\mathscr V_i}$ of orbital varieties closures lying in $\mathfrak m_\alpha$ \cite [3.1,3.2]{J00}.

The above result has the following

\begin {cor}  Let $p$ be a sum with non-zero integer coefficients $c_i$ (not necessarily positive) of characteristic polynomials $\{p_i\}_{i=1}^k$ of pairwise distinct orbital varieties closures $\mathscr V_i$ and $\pi'$ a subset of $\pi$.  Suppose $s_\alpha p = -p$, for all $\alpha \in \pi'$.  Then $s_\alpha p_i=-p_i$, for all $\alpha \in \pi'$ and all $i \in [1,k]$.
\end {cor}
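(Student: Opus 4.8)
The plan is to fix a single simple root $\alpha \in \pi'$ and prove that $s_\alpha p_i = -p_i$ for every $i$; since the hypothesis $s_\alpha p = -p$ is imposed for each $\alpha \in \pi'$ separately, and the desired conclusion is likewise indexed by $\alpha$, nothing is lost by treating one $\alpha$ at a time. Rewriting the hypothesis as $(s_\alpha+1)p = 0$ and recalling that, for an orbital variety closure, $s_\alpha p_\mathscr V = -p_\mathscr V$ forces $\alpha \mid p_\mathscr V$ and hence $\mathscr V \subset \mathfrak m_\alpha$ by \ref{3.4.6}, the goal becomes to show that every $\mathscr V_i$ is contained in $\mathfrak m_\alpha$.

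First I would partition $[1,k]$ into those $i$ with $\mathscr V_i \subset \mathfrak m_\alpha$ and those with $\mathscr V_i \not\subset \mathfrak m_\alpha$. Applying $s_\alpha+1$ to $p=\sum_i c_i p_i$, the first group contributes nothing by \ref{3.4.7}, so that
$$\sum_{\mathscr V_i \not\subset \mathfrak m_\alpha} c_i\,(s_\alpha+1)p_i = 0.$$
By the second case of \ref{3.4.7} each summand $(s_\alpha+1)p_i$ with $\mathscr V_i \not\subset \mathfrak m_\alpha$ is a sum, with non-negative integer coefficients, of characteristic polynomials of orbital variety closures lying in $\mathfrak m_\alpha$; moreover it is non-zero, for otherwise $s_\alpha p_i=-p_i$ would again force $\mathscr V_i \subset \mathfrak m_\alpha$. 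Expanding the displayed relation in the characteristic polynomials of orbital variety closures in $\mathfrak m_\alpha$, which are linearly independent by \ref{3.4.4}, turns it into a homogeneous system of linear equations in the $c_i$ whose coefficient matrix has non-negative integer entries.

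It remains to deduce that the offending index set is empty, i.e.\ that this relation cannot hold with the $c_i$ all non-zero. The crux---and the reason the statement is delicate---is precisely that the $c_i$ are allowed to be negative, so that a priori the non-negative contributions of distinct summands could cancel. I would resolve this by showing that the vectors $(s_\alpha+1)p_i$, taken over the indices with $\mathscr V_i \not\subset \mathfrak m_\alpha$, are themselves linearly independent; granting this, the relation forces each such $c_i$ to vanish, contradicting $c_i \neq 0$ unless no such index occurs. Equivalently, one must show that the $(-1)$-eigenspace of the involution $s_\alpha$ on the span of the characteristic polynomials is spanned \emph{exactly} by those $p_\mathscr V$ with $\mathscr V \subset \mathfrak m_\alpha$: these already lie in that eigenspace and are independent, so the real content is that they also span it.

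This independence is the main obstacle, and I expect to extract it from the finer description of the $s_\alpha$-action in \cite[3.1, 3.2]{J00} that underlies \ref{3.4.7}. The expectation is that each $(s_\alpha+1)p_i$ with $\mathscr V_i\not\subset\mathfrak m_\alpha$ carries a distinguished leading term $p_{\mathscr V_i'}$, where $\mathscr V_i'$ is obtained from $\mathscr V_i$ by a single explicit operation (the box-moving of the Robinson--Schensted picture in type $A$), and that the assignment $\mathscr V_i \mapsto \mathscr V_i'$ is injective into the orbital varieties of $\mathfrak m_\alpha$. Ordering the indices compatibly with the closure order of the $\mathscr V_i$ would then make the coefficient matrix triangular with non-zero diagonal, so that the leading terms can be stripped off one at a time, yielding the required independence. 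Running this argument for each $\alpha \in \pi'$ in turn completes the proof.
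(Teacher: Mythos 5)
Your reduction to a single $\alpha \in \pi'$, the splitting of $[1,k]$ into the indices with $\mathscr V_i \subset \mathfrak m_\alpha$ and the rest, and the passage to $\sum_{i} c_i(s_\alpha+1)p_i = 0$ over the offending indices all match the paper's proof. But your proposal stops exactly where the proof has to be given: the linear independence of the elements $(s_\alpha+1)p_i$ for $\mathscr V_i \not\subset \mathfrak m_\alpha$ is left as an ``expectation'' (the hoped-for leading-term/box-moving triangularity), with no argument supplied. Moreover, in the form in which you set it up, this independence is not available. Reading \ref{3.4.7} literally, as saying that $(s_\alpha+1)p_{\mathscr V}$ is a non-negative combination of characteristic polynomials of varieties inside $\mathfrak m_\alpha$, is a sign slip in the paper's statement: test it on the orbital variety $\mathscr V = \mathfrak n$, whose characteristic polynomial is a non-zero constant, so that $(s_\alpha+1)p_{\mathfrak n} = 2p_{\mathfrak n}$ cannot be supported on varieties in $\mathfrak m_\alpha$. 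The correct statement, and the one the paper's own proof actually uses, is the $W$-graph form: for $\mathscr V \not\subset \mathfrak m_\alpha$ one has $s_\alpha p_{\mathscr V} = p_{\mathscr V} + q_{\mathscr V}$, where $q_{\mathscr V}$ is a non-negative integer combination of characteristic polynomials of orbital varieties lying in $\mathfrak m_\alpha$; that is, it is $(s_\alpha-1)p_{\mathscr V}$, not $(s_\alpha+1)p_{\mathscr V}$, which is so supported. Note that $q_{\mathscr V}$ may perfectly well vanish (again $\mathscr V = \mathfrak n$), so distinct varieties can give equal, even zero, contributions; this kills any triangularity argument of the kind you propose.

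With the corrected formula the delicacy you worried about (cancellation permitted by negative $c_i$) simply never arises, and no finer combinatorics is needed. Writing $I = \{i : \mathscr V_i \not\subset \mathfrak m_\alpha\}$, the hypothesis $s_\alpha p = -p$ gives
$$ -2\sum_{i \in I} c_i p_i \;=\; \sum_{i \in I} c_i q_{\mathscr V_i}. $$
The left-hand side is a combination of characteristic polynomials of the $\mathscr V_i$, $i \in I$, none of which lies in $\mathfrak m_\alpha$; the right-hand side is a combination of characteristic polynomials of varieties which do lie in $\mathfrak m_\alpha$. All the orbital varieties occurring are therefore pairwise distinct, so the linear independence of \ref{3.4.4}, applied once to this whole collection (not to the $q$'s among themselves), forces every coefficient to vanish; in particular $2c_i = 0$ for all $i \in I$, whence $I = \emptyset$. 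This is exactly the paper's one-line argument --- its ``$2\sum_{i\in I}c_ip_i$'' is the diagonal term above --- and the reason it works is that the diagonal terms $2p_i$ survive on a set of basis vectors disjoint from those supporting the $q$'s, so the cancellations that blocked you never have to be excluded.
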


\begin {proof}  We can assume $\pi'$ non-empty otherwise there is nothing to prove and we can also consider simple roots individually.

Thus take $\alpha \in \pi'$.  Let $I$ be the subset $\{i \in [1,k]|\mathscr V_i \nsubseteq \mathfrak m_\alpha\}$ of $[1,k]$.  Then $2\sum_{i \in I}c_ip_i$ is a sum of characteristic polynomials of orbital varieties closures lying in $\mathfrak m_\alpha$ and therefore distinct from the $\mathscr V_i:i \in I$.  Then by linear independence (\ref {3.4.4}) it follows that $I$ is the empty set, as required.
\end {proof}

%\textbf{Remark}.  In the case $\mathscr V \nsubseteq \mathfrak m_\alpha$, we considered in \cite [3.1]{J00} the intersection

\subsubsection {}\label{3.4.8}

Let $\pi_1,\pi_2$ be subsets of $\pi$ and let $\mathfrak p_{\pi_1},\mathfrak p_{\pi_2}$ be the corresponding parabolics.  Set $\Pi^i=\prod_{\alpha \in \mathbb N^+\pi_i\cap \Delta^+}$, $\Pi^i_-=\Pi^i/\Pi$, for $i=1,2$.

\begin {prop} Suppose that the Levi factors of $\mathfrak p_{\pi_1},\mathfrak p_{\pi_2}$ are conjugated by some $w \in W$, equivalently $w\{s_\alpha\}_{\alpha \in \pi_1}=\{s_\alpha\}_{\alpha \in \pi_2}$. Then their nilradicals $\mathfrak m_{\pi_1},\mathfrak m_{\pi_2}$ admit the same number of hypersurface orbital varieties.
\end {prop}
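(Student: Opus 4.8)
The plan is to realise the number $N_{\pi_i}$ of hypersurface orbital varieties in $\mathfrak m_{\pi_i}$ as the dimension of a subspace $U_i$ of $S(\mathfrak h^*)$ spanned by characteristic polynomials, and then to show that the action of $w$ carries $U_1$ isomorphically onto $U_2$. Concretely, let $U_i$ be the linear span of the characteristic polynomials $p_{\mathscr V}$ as $\mathscr V$ runs over the hypersurface orbital varieties in $\mathfrak m_{\pi_i}$. Since distinct orbital varieties have linearly independent characteristic polynomials (\ref{3.4.4}), one has $\dim U_i=N_{\pi_i}$. It therefore suffices to prove $w(U_1)\subseteq U_2$; applying the same argument to $w^{-1}$, which conjugates the Levi of $\mathfrak p_{\pi_2}$ onto that of $\mathfrak p_{\pi_1}$, then gives $w(U_1)=U_2$, whence $N_{\pi_1}=N_{\pi_2}$.

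The first step is to identify the characteristic polynomials of hypersurface orbital varieties by their symmetry. Fix a hypersurface orbital variety $\mathscr V$ in $\mathfrak m_{\pi_1}$. For every simple root $\alpha\in\pi_1$ the root vector $x_\alpha$ is omitted from $\mathfrak m_{\pi_1}$, so $\mathfrak m_{\pi_1}\subset\mathfrak m_\alpha$ and a fortiori $\mathscr V\subset\mathfrak m_\alpha$; by \ref{3.4.7} this gives $s_\alpha p_{\mathscr V}=-p_{\mathscr V}$. As $W_{\pi_1}$ is generated by the $s_\alpha,\ \alpha\in\pi_1$, the polynomial $p_{\mathscr V}$ transforms by the sign character of $W_{\pi_1}$, i.e. it is $W_{\pi_1}$-anti-invariant. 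Its degree is the codimension of $\mathscr V$ in $\mathfrak n$, namely $\deg\Pi^1+1$, because $\mathscr V$ has codimension one in $\mathfrak m_{\pi_1}$ and $\mathfrak m_{\pi_1}$ has codimension $\deg\Pi^1$ in $\mathfrak n$.

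Now apply $w$. Because $wW_{\pi_1}w^{-1}=W_{\pi_2}$ and $w\Pi^1=\pm\Pi^2$, the polynomial $wp_{\mathscr V}$ is $W_{\pi_2}$-anti-invariant and still homogeneous of degree $\deg\Pi^2+1$. By \ref{3.4.7} each simple reflection sends a characteristic polynomial to an integral combination of characteristic polynomials of the same degree, so, writing $w$ as a product of simple reflections, $wp_{\mathscr V}=\sum_j c_j p_{\mathscr W_j}$ with the $\mathscr W_j$ pairwise distinct orbital varieties and $c_j\in\mathbb Z\setminus\{0\}$. Since $wp_{\mathscr V}$ is fixed up to sign by each $s_\beta,\ \beta\in\pi_2$, the Corollary of \ref{3.4.7} forces $s_\beta p_{\mathscr W_j}=-p_{\mathscr W_j}$ for every $\beta\in\pi_2$ and every $j$; hence each $p_{\mathscr W_j}$ is $W_{\pi_2}$-anti-invariant and so, vanishing on each reflection hyperplane $\beta=0,\ \beta\in\Delta^+\cap\mathbb N\pi_2$, is divisible by these pairwise coprime linear forms and thus by their product $\Pi^2$. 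By Lemma \ref{3.4.6} this means $\mathscr W_j\subset\mathfrak m_{\pi_2}$, while the degree computation shows that $p_{\mathscr W_j}=\Pi^2\cdot(\text{linear})$, so $\mathscr W_j$ has codimension one in $\mathfrak m_{\pi_2}$. Thus each $\mathscr W_j$ is a hypersurface orbital variety in $\mathfrak m_{\pi_2}$ and $wp_{\mathscr V}\in U_2$.

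This establishes $w(U_1)\subseteq U_2$, completing the argument as indicated in the first paragraph. The main obstacle is precisely the passage carried out in the third paragraph: knowing that the single anti-invariant $wp_{\mathscr V}$ lies in $U_2$ requires descending the $W_{\pi_2}$-anti-invariance of the sum to each constituent characteristic polynomial, which is exactly what the Corollary of \ref{3.4.7} (resting on the linear independence of \ref{3.4.4}) supplies, and then converting that anti-invariance into the geometric containment $\mathscr W_j\subset\mathfrak m_{\pi_2}$ through the divisibility criterion of Lemma \ref{3.4.6}. A minor point to be verified is that the sign ambiguity in $w\Pi^1=\pm\Pi^2$ and the choice of reduced expression for $w$ are immaterial, since only the $\mathbb Z$-span of characteristic polynomials, and not individual signs, enters the conclusion.
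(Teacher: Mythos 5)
Your proposal is correct and follows essentially the same route as the paper: anti-invariance of $p_{\mathscr V}$ under $W_{\pi_1}$, expansion of $wp_{\mathscr V}$ into characteristic polynomials, descent of anti-invariance to each constituent via the Corollary of \ref{3.4.7}, divisibility by $\Pi^2$ plus the degree count, and Lemma \ref{3.4.6} to identify each constituent as a hypersurface orbital variety in $\mathfrak m_{\pi_2}$. Your final step (comparing $\dim U_1$ and $\dim U_2$ via $w(U_1)\subseteq U_2$ and the same argument for $w^{-1}$) is just a cleaner packaging of the paper's invertible-matrix argument, resting on the same linear independence from \ref{3.4.4}.
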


\begin {proof} Recall the notation of \ref {3.4.5}.  Under the hypothesis $w\Pi^1_-=\Pi^2_-$, up to a sign.  Let $\mathscr V_j$ be a hypersurface orbital variety in $\mathfrak m_{\pi_1}$.  Its characteristic polynomial $p_{\mathscr V_j}$, or simply $p$,  takes the form $\varpi_j\Pi_1$, where  $\varpi_j$ is the weight of the generator of the ideal of definition of $\mathscr V_j$. The latter is $\mathfrak h'$ invariant and so $\varpi_j$ lies in the orthogonal of $\pi_1$.  Consequently $s_\alpha p=-p$, for all $\alpha \in \pi_1$.

%Then its characteristic function takes the form $\varpi_j q_{\mathscr V_j}$, with $\varpi_j$ a linear function on $\mathfrak h^*$ lying in the orthogonal of $\pi_1$. Indeed it is the weight of the generator of the ideal of definition of $\mathscr V_j$, which is $\mathfrak h'$ invariant.

By \ref {3.4.4}, $p':=w q_{\mathscr V_j}$ is  a sum of characteristic polynomials $\{p_k\}_{k\in K}$ of uniquely determined orbital variety closures. On the other hand $s_\alpha p'=-p'$, for all $\alpha \in \pi_2$. By Corollary \ref {3.4.7} it follows that $s_\alpha p_k=-p_k$, for all $\alpha \in \pi_2$ and all $k \in K$. Thus for all $k \in K$, it follows that $\Pi^2$ divides $p_k$, and so $r_{\mathscr V_k}\Pi^2_-$ is a polynomial.  Moreover since the action of $W$ does not change degree, it has the same degree as $\varpi_j$ which is one.

We conclude by Lemma \ref {3.4.6} that $\mathscr V_k$ is a subvariety of $\mathfrak m_{\pi_2}$ of codimension one in $\mathfrak m_{\pi_2}$.  Therefore it is a hypersurface orbital variety in
$\mathfrak m_{\pi_2}$, so its charateristic function takes the form $\varpi_k \Pi^2_-$ and belongs to the orthogonal of $\pi_2$ in $\mathfrak h$.

%this expression equals $p:=w\varpi_j/\Pi^2_-$. Its numerator equals $w\varpi_j$ and is orthogonal to the elements of $\pi_2$.  Thus $s_\alpha p=-p, \forall \alpha \in \pi_2$.

Thus may write  $\varpi_j=\sum_{k\in K} A_{j,k}\varpi_k$, for some matrix $A_{j,k} \in \mathbb C$ of coefficients.

%It follows by Lemma \ref {3.4.6}, that these orbital varieties with characteristic functions $\varpi_k/\Pi^2_-$ lie in $\mathfrak m_{\pi_2}$ and have codimension $1$ by \ref {3.4.4}.

Repeating the same argument with $w^{-1}$ it follows that the matrix $\{A_{j,k}\}$ is invertible.  This establishes a bijection between the hypersurface orbital varieties in $\mathfrak m_{\pi_1},\mathfrak m_{\pi_2}$, proving the proposition.
\end {proof}

\subsubsection {}\label{3.4.9}

The above result falls short of what we would like to prove, namely that $w\varpi_j/\Pi^2_-$ is a characteristic function of a \textit{single} hypersurface orbital variety.  In type $A$ this is established by explicit computation (\ref {4.6}).

By contrast there seems to be no relation between their degrees of the generators of the hypersurfaces in $\mathfrak m_{\pi_1},\mathfrak m_{\pi_2}$.  Indeed even the degree sums can be different.  Again there seems to be no relation between the Weierstrass sections in $\mathfrak m_{\pi_1}$ and in $\mathfrak m_{\pi_2}$.

\

\textbf{Example}.  Take $\pi =\{\alpha, \beta\}$ of type $B_2$ with $\beta$ the long root.  Then $\mathfrak m_\alpha$ admits a hypersurface orbital variety $\mathscr V$ and $p_\mathscr V=\alpha(\alpha+\beta)$, yet $\mathfrak m_\beta$ does not. Here the corresponding Levi factors are \textit{not} conjugate.  Again the minimal non-zero orbit meets $\mathfrak m_\alpha$ in codimension $2$, but $\mathfrak m_\beta$ in codimension $4$.

\subsubsection {}\label{3.4.10}

Recall the notation of \ref {3.4.1}. We obtain a supplementary invariant in type $A$ exactly when $n_1=n_r$ and $n_i\neq n_1$, for all $i \in [2,r-1]$.  It was described by Benlolo-Sanderson in \cite {BS} who calculated its degree to be
$$\sum_{i=1}^{r-1} \min {(n_i,n_1)}.\eqno {(7)}$$

This invariant is described precisely in \ref {4.1.6}.  We call it the Benlolo-Sanderson (supplementary) invariant.

\subsection{Kostant Weights}\label{3.5}

Let $\{\varpi_i\}$ be the set of fundamental weights and $w_0$ the unique longest element of the Weyl group.  Take $i \leq [\frac{n}{2}]$.  The $i^{th}$ Kostant weight (in type $A$ is defined to be $\varpi_i-w_o\varpi_i$.  It was shown by Dixmier \cite {D1} that $Y( \mathfrak n):=S(\mathfrak n)^\mathfrak n$ is a polynomial algebra of generators whose weights are Kostant weights. This was put in more general setting by Kostant who introduced what is now known as the Kostant cascade.  For every simple Lie algebra there is an element $f_i \in Y(\mathfrak n)$ whose weight is a Kostant weight and this element can be constructed from the Hopf dual of the enveloping algebra.  Nevertheless he missed \cite {K1} the fact that outside types $A,C$ $f_i$ may admit a polynomial square root which is a (polynomial) generator of $Y(\mathfrak n)$ and whose weight is half a Kostant weight.  These factors of $\frac{1}{2}$ are completely known \cite [5.4]{J6} but still somewhat mysterious.  We call them the coadjoint factors.  They only appear outside types $A$ and $C$.

\subsection{The highest degree Invariant Generator}\label{3.6}
\subsubsection{}\label {3.6.1}

The invariant algebra $S(\mathfrak g)^\mathfrak g$ is polynomial. It admits a highest degree invariant generator $z$ (of degree the number of positive roots plus one) which is algebraically independent of the remaining generators.  (It can be uniquely determined up to a scalar by the requirement that all its derivatives are harmonic polynomials, though it is not so clear if this uniqueness of great importance.) It type $A_{n-1}$ it is simply the determinant.  A similar description of $z$ using a determinant is possible for all $\mathfrak g$ classical.

\subsubsection{}\label {3.6.2}

Now take a generator $f_i$ of $Y(\mathfrak n)$.   Its weight $\varpi'_i$ is a Kostant weight (or half a Kostant weight). The generator $f_i$ is a semi-invariant for the parabolic subalgebra defined by the orthogonal $\pi^i$ of $\pi$ with respect to the Kostant weight. By passing to the dual we may view $f_i$ as a differential operator $f_i^*$.  Now take the derivative of $z$ with respect to $f_i^*$.  The resulting element has weight $-\varpi'_i$  is invariant for the opposed algebra $\mathfrak p^-_{\pi^i}$.

In type $A$ the above element is simply the lower left hand corner $n-i\times n-i$ minor.  It is a $\mathfrak p^-_{\pi^i}$ invariant.

\

\textbf{Example.}  Consider the example of \ref {3.2}.  The single invariant generator for the (coadjoint) action of $\mathfrak p'$ on $\mathfrak m^*$ is the highest root vector and its weight is \textit{half} of the  Kostant weight $\varpi_2-w_0\varpi_2$, whilst the single invariant generator for the action of $\mathfrak p'$ on $\mathfrak p$ is the element in \ref {3.3} with $n=3$.  It has degree $4$ and its weight is the minus the Kostant weight $\varpi_2-w_0\varpi_2$.

\subsubsection{}\label {3.6.3}

Now suppose that $\mathfrak g$ is of type $A_{n-1}$ and let $\mathfrak p$ be \textit{any} parabolic subalgebra given by blocks as above with $m=n_1=n_r$.  Let $\mathfrak m$ be its nilradical.  Restrict $f_m^*z$ to $\mathfrak m + \Id$.  It is an inhomogeneous polynomial.  Its leading term has degree given by $(2)$.  Moreover the leading term is a $\mathfrak p'$ invariant and exactly coincides with the Benlolo-Sanderson invariant.  The proof of its invariance was observed by Benlolo and Sanderson and can also be extracted from \cite [3.11]{JM}.  The result is entirely elementary and can be proved (with some attention to details) without using orbital varieties. Surprisingly the lower order terms are in general \textit{not} invariant.  It is also possible to show that this invariant is irreducible if and only if there is no $j: 1<j<r$ with $n_j=m$ \cite  [Prop. 3.15]{JM}. We give a new proof of irreducibility in \ref {5.3}.

\subsubsection{}\label {3.6.4}

In principle we thus have a way to describe supplementary invariants applicable to all simple Lie algebras.  There are of course some difficulties.  The first is to obtain a good description of $z$.  The second is to be able to match the parabolic with a Kostant weight.  Surprisingly further square roots can appear for example in type $C_3$. Consequently their weights can be half a Kostant weight.  We call these the adjoint factors. Outside type $A$ they can be non-trivial.  They differ from the coadjoint factors.  They are unknown in general; but in classical type can be read off from the results of Perelman \cite {Pe}.

\subsubsection{}\label {3.6.5}

The main difficulty of our procedure is show that we obtain \textit{all} the supplementary invariants.  This is relatively easy in type $A$ due to the rather explicit description of the hypersurface orbital varieties given in \cite [Sect. 2]{JM}. In classical type Perelman \cite {Pe} resorted to simply comparing the number of hypersurface orbital varieties given by domino manipulation with the number of invariants she could find.

\subsubsection{}\label {3.6.6}

Our proposal to overcome the above difficulty is to explicitly describe a regular $P'$ orbit in $\mathfrak m$.  Here we apply this procedure in type $A_{n-1}$.

Actually we do much more.  Namely we construct a Weierstrass section for the action of $P'$ on $\mathfrak m$, which is an entirely new result. Unlike the co-adjoint case \cite {J5}, it is not always possible to find an adapted pair.

\subsubsection{}\label {3.6.7}

Besides the above we also construct from our Weierstrass section a $P$ orbit in an irreducible component of $\mathscr N$ of dimension $\dim \mathfrak m - N_{\pi,\pi'}$.  One can ask if $\mathscr N$ is equi-dimensional (so then each component has dimension $\dim \mathfrak m - N_{\pi,\pi'}$).  Then such an orbit would be dense and one can ask if this sets up a bijection between suitably defined inequivalent Weierstrass sections and components of $\mathscr N$.

Behind our construction is the conjecture that $\mathscr N$ is equidimensional and each component admits a dense $P$ orbit $P.\hat{e}$ and moreover there exists $h \in \mathfrak h$ such that $h.\hat{e} = -\hat{e}$. Fix such a dense $P$ orbit and let $V$ be an $h$ stable vector space complement to $\mathfrak p.\hat{e}$.  We further conjecture that $\hat{e}+V$ is a Weierstrass section for the action of $P'$ on $\mathfrak m$ and that up to equivalence every Weierstrass section so obtains.  In type $A$ we give some examples of $\hat {e}$.

The advantage of this formulation is that it makes sense for a parabolic subalgebra of any simple Lie algebra.

\section{Construction of Weierstrass Sections in type $A$}\label{4}

\subsection {Numbered Tableaux} \label{4.1}

\subsubsection {} \label {4.1.1}

Fix a parabolic subalgebra $\mathfrak p$ in type $A_{n-1}$ and recall the notation of \ref {3.4.1}.   Set $n^i=\sum_{j=1}^{i-1}n_j$.

We shall represent this unordered sequence (in a standard fashion) as a union of columns $C_v:v=1,2,\ldots,r$, labelled by starting from the left with $C_v$ of height $n_v$.   We call this array, the diagram $\mathscr D$ defined by $\mathfrak p$. The rows of $\mathscr D$ are designated by $R_u:u=1,2,\ldots$ starting from above.   Each column of height $s$ is viewed as consisting of a stack of $s$ boxes.  The box lying on the intersection of $R_u$ and $C_v$ is labelled by $b_{u,v}$.
The height of $\mathscr D$ is defined to be $\max_{i=1}^r \{n_i\}$ and denoted by $ht \mathscr D$.

\subsubsection {} \label {4.1.2}

Two columns are said to be neighbouring (of height $s$) if they are of height $s$ and there are no columns of height $s$ between them. By \cite [Thm. 2.19]{JM} the number of hypersurface orbital varieties (and hence by Lemma \ref {2.3.4} the number of invariant generators of $S(\mathfrak m^*)^{\mathfrak p'}$) is the number of pairs of neighbouring columns.  However we shall not assume this result and indeed we shall give another proof (Corollary \ref {5.2.6}).

Through the reduction in \ref {3.4} we obtain a Benlolo-Sanderson invariant for every pair of neighbouring columns in $\mathscr D$.

\subsubsection {} \label {4.1.3}

Associate to neighbouring columns $C_v,C_{v'}$ of height $s$, the minor $M_{v,v'}$, or simply $M$, bounded by the corresponding blocks $B_v,B_{v'}$ of $\mathfrak r$.  More precisely the top row (resp. right hand column) of $M_{v,v'}$ is the top row of $B_v$ (resp. right hand column of $B_{v'}$), whilst the bottom row (resp. left hand column) of $M_{v,v'}$ is just above the top row of $B_{v'}$ (resp. just to the right of the right hand column of $B_v$).  Set $m_{v,v'}=\sum_{i=v+1}^{v'}n_i$, or simply, $m$. It is clear that $M$ is an $m\times m$ minor in $\End \mathbb C^n$.  It may be defined by its subset of rows $R_i:i\in I:=[n^v+1,n^{v'}]$ and columns $C_j:j \in J:= [s+n^v+1,s+n^{v'}]$ it meets.

\textbf{N.B.} If this construction is applied to columns of the same height $s$ which are not neighbouring, then we obtain a product of Benlolo-Sanderson invariants coming from the successive pairs of neighbouring columns of height $s$.  On the other hand as we note in \ref {5.3} the Benlolo-Sanderson invariants are themselves irreducible.

\subsubsection {} \label {4.1.4}

Insert the numbers $1,2,\ldots,n$, into $\mathscr D$ by placing them in the boxes successively down the columns and going from left to right. More precisely $u+n^v$ is placed in $b_{u,v}$. This is exactly the prescription given in \cite [2.12]{JM} as a first step in describing $\mathfrak m$ as an orbital variety closure using the Robinson-Schensted map.

$\mathscr D$  numbered in this fashion is called a tableau and denoted by $\mathscr T$.

%(The second step is to obtain a genuine standard tableau by shifting numbered boxes to the left.)

\subsubsection {} \label {4.1.5}

The meaning of the construction of \ref {4.1.4} in the present context is the following. Let $x_{i,j}$ denote a standard matrix unit.  Then $x_{i,j} \in \mathfrak m$, if and only if $i\in \mathscr T$ lies in a column strictly to the left of $j\in \mathscr T$.

%Again if addition $i,j$ lie in successive columns, then $x_{i,j}$ lies in a block which has Levi blocks as left and lower neighbours.

\

The above observation has the following easy though important consequence.

%Associate the matrix unit $x_{i,j}$ to the pair of boxes $b_{u,v},b_{u',v'}:v<v'$, where $i$ is the entry of $b_{u,v}$ and $j$ the entry of $b_{u',v'}$ with the numbering in the blocks given by \ref {4.1.4}.

\begin {lemma} Suppose that $b_{u,v},b_{u',v'}:v<v'$ lie in that part of $\mathscr T$ bounded by a pair of neighbouring columns $C_v,C_{v'}$. Let $i$ (resp. $j$) be the entry of $b_{u,v}$ (resp. $b_{u',v'}$). Then $x_{i,j}\in M_{v,v'} \subset \mathfrak m$ and every element of $M_{u,v}$ so appears.
\end {lemma}

\subsubsection {} \label {4.1.6}

Actually it is more appropriate (see \ref {3.6.2}) to replace $M$ by its transpose $M^t$ viewed through the Killing form as a function on $\mathfrak g$ and by restriction as a function on $\mathfrak m$.

This presentation has the advantage that we do not have to say that the entries of $M$ are zero off $\mathfrak m$, as in \cite {BS} and \cite {JM}.  Again we may replaced a selected subset of the co-ordinates by $1$.  This means that we are evaluating $M^t$ on a linear subvariety of $\mathfrak m$.  Finally let $\textbf{1}$ denote the identity matrix, viewed as an element of $\End \mathbb C^n$.  Then we evaluate $M^t$ on $\textbf{1}+\mathfrak m$. This ``identity translate'' was motivated by Quantum Groups in which the Cartan subalgebra $\mathfrak h$ is replaced by a torus and the zero element in $\mathfrak h$ by the identity. The image of $M^t$ is not homogeneous and we define its top non-vanishing term to be $\gr M_{v,v'}$.  The following result appears in \cite {BS}. (It is easily checked).  Set $d_{v,v'}:=\deg \gr M_{v,v'}$.

\begin {lemma}  $$d_{v,v'}=\sum_{i=v+1}^{v'}\min \{s,n_i\}, \eqno {(3)}.$$
\end {lemma}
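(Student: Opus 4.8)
The plan is to understand the structure of the minor $M_{v,v'}$ and the effect of evaluating its transpose on $\mathbf{1}+\mathfrak m$, then extract the leading term degree. By Lemma \ref{4.1.5} and the description in \ref{4.1.3}, the minor $M_{v,v'}$ is the $m\times m$ submatrix of $\End\mathbb C^n$ with rows indexed by $I=[n^v+1,n^{v'}]$ and columns by $J=[s+n^v+1,s+n^{v'}]$, where $m=m_{v,v'}=\sum_{i=v+1}^{v'}n_i$. First I would write $M^t$ evaluated on $\mathbf{1}+x$, with $x\in\mathfrak m$, as $\det$ of the corresponding submatrix of $\mathbf{1}+x$. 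The key observation is that an off-diagonal entry $x_{i,j}$ contributes a genuine coordinate function (degree one in the variables of $\mathfrak m$) precisely when $i$ lies strictly to the left of $j$ in $\mathscr T$, whereas a diagonal entry of $\mathbf{1}+x$ contributes a constant $1$ (these diagonal positions meet $\mathfrak r$, not $\mathfrak m$, so $x$ has no diagonal entries here). Entries where $i$ does not lie strictly left of $j$ vanish identically on $\mathfrak m$.

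Next I would analyze which positions $(i,j)$ with $i\in I$, $j\in J$ can simultaneously appear in a single nonzero term of the determinant expansion, and with what total degree. A term of the determinant is a product over a permutation matching rows of $I$ to columns of $J$; its degree equals the number of matched pairs that are genuine off-diagonal coordinates (the rest being the constant $1$ coming from diagonal coincidences). The combinatorial content is that the rows and columns of $M_{v,v'}$ are aligned by the tableau: the column indexing set $J$ is a shift of the row set $I$ by $s$, reflecting that the left block $B_v$ and right block $B_{v'}$ both have height $s$. The degree of $\gr M_{v,v'}$ is therefore the maximal number of off-diagonal matchings achievable, which I would compute block by block over the intermediate columns $C_{v+1},\ldots,C_{v'}$. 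For the block $C_i$ of height $n_i$, the overlap available for contributing coordinate entries is governed by comparing $n_i$ with the height $s$ of the neighbouring columns: one can match at most $\min\{s,n_i\}$ boxes of $C_i$ into coordinate entries, since a column of height exceeding $s$ can only supply $s$ genuine non-diagonal matches within the band of width $s$, while a shorter column supplies only $n_i$.

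Summing the maximal contributions over $i=v+1,\ldots,v'$ then yields $d_{v,v'}=\sum_{i=v+1}^{v'}\min\{s,n_i\}$, which is the claimed formula (3). To make this rigorous I would exhibit one permutation achieving exactly $\sum_{i=v+1}^{v'}\min\{s,n_i\}$ off-diagonal factors with a nonzero product (proving $\deg\gr M_{v,v'}\ge$ this value), and then argue that no permutation can do better, so that the leading term is a single monomial (up to sign) of precisely this degree (proving the reverse inequality, and incidentally that $\gr M_{v,v'}$ does not vanish). The main obstacle I anticipate is the upper bound: showing that no matching can beat $\min\{s,n_i\}$ in each block at once, i.e.\ that the constraints imposed by the shift-by-$s$ alignment of $I$ and $J$ across all intermediate columns are independent and cannot be circumvented by a clever global permutation. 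This amounts to a careful bookkeeping of which $(i,j)$ pairs are simultaneously admissible (via Lemma \ref{4.1.5}, that $i$ lies strictly left of $j$) and hence to verifying that the support pattern of the band forces each column to contribute at most $\min\{s,n_i\}$; once the pattern is pinned down this is a direct, if slightly fiddly, counting argument rather than anything conceptually deep.
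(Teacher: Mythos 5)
Your overall route is the natural one, and since the paper itself offers no proof of this lemma beyond citing Benlolo--Sanderson and declaring it ``easily checked'', there is no competing argument to compare against: expand the minor of $\mathbf{1}+x$ as a sum over permutations $\sigma:I\to J$, note that diagonal positions (boxes of the intermediate columns, which index both a row and a column) contribute the constant $1$ while positions $(i,j)$ with $i$ strictly left of $j$ in $\mathscr T$ contribute degree-one coordinates and all other positions vanish, exhibit an explicit permutation realising degree $\sum_{i=v+1}^{v'}\min\{s,n_i\}$ (the horizontal matching along rows $R_1,\dots,R_s$), and prove the per-column upper bound $\min\{s,n_i\}$. However, there is a genuine gap at exactly the point you flag as the main obstacle, and the heuristic you offer for it is wrong. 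The bound is \emph{not} forced by any ``band of width $s$'': the nonzero entries of $M_{v,v'}$ are not confined to a band, since e.g.\ the positions pairing boxes of $C_v$ with boxes of $C_{v'}$ are as far off the diagonal as possible and are perfectly good degree-one entries. What actually forces the bound is the permutation structure together with the fact that the only degree-zero option available to a row $k$ is its own column $k$. Concretely: view a permutation with nonvanishing term as a directed graph $k\mapsto\sigma(k)$ on $I\cup J$; every coordinate edge goes strictly left to right in $\mathscr T$, so no cycle of length $\geq 2$ can occur, and the graph decomposes into $s$ vertex-disjoint strictly rightward paths from the $s$ boxes of $C_v$ to the $s$ boxes of $C_{v'}$, plus diagonal fixed points. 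Each path meets each intermediate column $C_i$ in at most one box, so at most $\min\{s,n_i\}$ boxes of $C_i$ carry coordinate factors, and summing over $i$ (with $C_{v'}$ contributing $s$) gives the upper bound. This disjoint-path observation (or an equivalent counting of coordinate edges crossing each gap between consecutive columns) is the one missing idea; without it your ``slightly fiddly counting'' has nothing to count.

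A second, smaller, error: your claim that the leading term is ``a single monomial (up to sign)'' is false. Already for two adjacent columns of equal height $s$ with nothing in between, the leading term is the full generic $s\times s$ determinant, a sum of $s!$ monomials; the same happens in general, where the top component is the sum over all permutations realising the maximal number of coordinate factors. Fortunately this is inessential: what you actually need is that these top-degree terms do not cancel, and that holds because distinct permutations contribute distinct monomials (the monomial $\prod_{\sigma(k)\neq k}x_{k,\sigma(k)}$ determines $\sigma$, the fixed points being the complement). With these two repairs your plan yields a complete proof of $(3)$.
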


\subsubsection {} \label {4.1.7}

For our purposes an easy but key observation is that $m_{v,v'}-d_{v,v'}$ is exactly the number of boxes lying on the rows $R_i:i>s$ lying between the columns $C_v,C_{v'}$ (of height $s$).  Combined with Lemma \ref {4.1.5}, this gives the following result.

Two boxes $b_{u,v},b_{u',v'}$ in $\mathscr T$ are said to be strictly ordered if $v<v'$ and we write $b<b'$.

\begin {lemma}  The above construction gives a bijection between the standard matrix units $x_{i,j}\in \mathfrak m$ and pairs of strictly ordered boxes in $\mathscr T$ lying between the given neighbouring columns of height $s$ in the rows $R_i:i=1,2,\ldots,s$.
\end {lemma}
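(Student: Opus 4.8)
The plan is to read the asserted bijection directly off the numbering of \ref{4.1.4}, calling on Lemma \ref{4.1.5} only to guarantee that the matrix units produced really lie in $\mathfrak m$. Explicitly, I would send a strictly ordered pair $b<b'$ of boxes lying in the rows $R_1,\ldots,R_s$ between the neighbouring columns $C_v,C_{v'}$ to the matrix unit $x_{i,j}$, where $i$ (resp. $j$) is the entry of $b$ (resp. $b'$); the inverse map should simply invert the numbering, recovering from $x_{i,j}$ the unique boxes carrying the entries $i$ and $j$.

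First I would record that the numbering $b_{u,a}\mapsto u+n^a$ is a bijection of the boxes of $\mathscr D$ onto $[1,n]$, its inverse sending an integer $i$ to the box in row $i-n^a$ of the column $C_a$ determined by $n^a<i\le n^{a+1}$. Injectivity of the map is then immediate, since a pair of boxes is determined by the pair of its entries. As $b<b'$ means that $b$ sits in a column strictly to the left of $b'$, the criterion stated in \ref{4.1.5} gives $x_{i,j}\in\mathfrak m$. A short range check then places $x_{i,j}$ inside $M_{v,v'}$: for a box in rows $\le s$ of a column $C_a$ with $v\le a\le v'-1$ the entry lies in $[n^a+1,n^{a+1}]\subseteq I=[n^v+1,n^{v'}]$, while for $v+1\le a\le v'$ it lies in $[n^{a}+1,n^{a+1}]\subseteq J=[s+n^v+1,s+n^{v'}]$, using $n^{v+1}=n^v+s$ and $n^{v'+1}=n^{v'}+s$ since $n_v=n_{v'}=s$.

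For surjectivity I would argue in reverse: any matrix unit of the distinguished set, namely one whose indices $i,j$ both label boxes of rows $R_1,\ldots,R_s$ between $C_v$ and $C_{v'}$, pulls back under the numbering to boxes $b,b'$ that are automatically strictly ordered, since $x_{i,j}\in\mathfrak m$ forces the column of $b$ to lie to the left of that of $b'$. Finally I would invoke the count recorded just before the statement: the boxes between $C_v$ and $C_{v'}$ lying in rows $R_i$ with $i>s$ number exactly $m_{v,v'}-d_{v,v'}$, and these are precisely the boxes removed by the restriction to rows $\le s$. This pins down the image as the matrix units of the statement and shows nothing is left over.

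I expect the only real work to be bookkeeping. One must check that for the intermediate columns $C_a$, $v<a<v'$, whose height may be less than or greater than $s$, the entries of their rows-$\le s$ boxes land in the correct sub-ranges of $I$ and $J$; the two boundary columns cause no trouble because $n_v=n_{v'}=s$ means they have no boxes in rows $>s$, so the restriction is vacuous on them and every excluded box lies in an intermediate column, which is exactly what the quantity $m_{v,v'}-d_{v,v'}$ measures. Once these range inclusions are verified the bijection follows formally from the injectivity of the numbering together with Lemma \ref{4.1.5}.
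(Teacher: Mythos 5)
Your proposal is correct and is essentially the paper's own argument: the paper deduces the lemma in a single sentence from Lemma \ref{4.1.5} combined with the observation that $m_{v,v'}-d_{v,v'}$ counts the boxes in rows $R_i:i>s$ between the columns, and your write-up is exactly that deduction with the index-range bookkeeping ($I=[n^v+1,n^{v'}]$, $J=[s+n^v+1,s+n^{v'}]$, $n_v=n_{v'}=s$) made explicit. Your reading of the loosely worded left-hand side of the bijection --- the entries of $M_{v,v'}$ lying in $\mathfrak m$ whose row and column indices avoid the entries of boxes in rows $>s$ --- is also the interpretation the paper actually uses later, in the evaluation argument of \ref{4.2.5}.
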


%Notice here that one must not confuse the co-ordinates on the boxes with the co-ordinates of the standard matrix units.

\

\subsection {The Construction of a Weierstrass Section} \label{4.2}

\

We represent the proposed Weierstrass section $e+V$ as follows.

Following Lemma \ref {4.1.7}, for each matrix unit $x_{i,j} \in \mathfrak m$, we draw a line joining the corresponding boxes in $\mathscr T$. We label every such line with a  $1$ or a $0$.

We will write $e$ as a sum of the form $x_{i,j}$ given by the lines labelled by a $1$ and $V$ as a direct sum of one dimensional subspaces $\mathbb Cx_{i,j}$ given by the lines labelled by $0$.

The way to specify which $x_{i,j}$ is labelled by $1$ or by $0$ is described below following three steps.

 Two distinct boxes in $\mathscr D$ are said to adjacent at level $u$ if they lie on row $R_u$ and there are no boxes in $\mathscr D$  in $R_u$ between them, equivalently that the columns between those containing the given boxes have height strictly less than $u$.

 Throughout all lines will be considered to be directed and going strictly from left to right.  For a particular line this has no meaning (except when we speak of its weight \ref {4.2.4}) but does have a meaning when lines are concatenated and in this the direction is preserved.  (No left-right zig-zagging allowed!)

\subsubsection {First Step}\label {4.2.1}

\

In the first step all horizontal lines between adjacent boxes on the same level are drawn.  This will mean that each box has at most one left (resp. right) going line.  This was the prescription used by Ringel et al \cite {BHRR} to construct an explicit element in $\mathfrak m$ generating a dense $P$ orbit.

\begin {lemma}  Set $s=\max_{i \in [1,r]}n_i$.  The number $m$  of horizontal lines is  $(\sum_{i=1}^r n_i)-s$.
In particular it is independent of the ordering of the set $\{n_i\}_{i=1}^r$.
\end {lemma}

\begin {proof} The assertion is immediate if there is just one column. Decreasing the length of the shortest column by $1$ decreases $m$ by $1$ and the above sum by $1$. This eventually reduces the number of columns by one and then the assertion follows by induction on the number of columns.
\end {proof}

\subsubsection {Second Step}\label {4.2.2}

\

In the second step every horizontal line is given the label $1$ with the following exceptions when it is given the label $0$.  Such an exception occurs exactly when there are two neighbouring columns of height $s$, say $C_v,C_{v'};v<v'$. Consider the horizontal lines joining adjacent blocks lying between $C_v,C_{v'}$.

Notice that the total number of zeros is just the number of pairs of neighbouring columns.

Choose $v''$ maximal such that $v\leq v''< v'$ and $C_{v''}$ has height $\geq s$.  Then the horizontal line joining the boxes $b_{s,v},b_{s,v''}$ is given the label $0$.  We call this a rightmost labelling by $0$. A leftmost labelling by $0$ is defined by requiring in the above that $v''$ be minimal such that $v< v''\leq v'$ and $C_{v''}$ has height $\geq s$.

It turns out that in order to carry out the induction procedure proving the separation result described in Proposition \ref {4.3.4}, we must consider both leftmost and rightmost labellings for all $s$.

There are two easy consequences of this construction. Set $s+1 = ht \mathscr D$.  Let $C_{v_1},C_{v_2},\ldots, C_{v_m}$ denote the columns of height $s+1$ in $\mathscr D$. Then

\

$(i)$. All the horizontal lines lying in $R_{s+1}$ are all labelled with $0$.

\

$(ii)$.   For a particular labelling, $b_{s,i}$ has at most one outgoing line with label $1$ for either $i=1$ or $i=m$.  Indeed if this were to fail, there would be a column $C_\ell$ (resp. $C_r$) of height $s$ to the left (resp. right)  of $C_{v_1}$ (resp. $C_{v_t}$) with no columns of height $\geq s$ in between. Then the assertion for $i=1$ (resp. $i=m$) results from a leftmost (resp. rightmost) labelling at level $s$.

 %In this case there exists $v''$ maximal such that $v\leq v''< v'$ and $C_{v''}$ has height $\geq s$.  Then the horizontal line joining the boxes $b_{s,v},b_{s,v''}$ is given the label $0$.  We call this a rightmost labelling by $0$.  It turns out that in order to carry out the induction procedure proving the separation result described in Proposition \ref {4.3.4}, we also have to consider a leftmost labelling by $0$ defined by requiring in the above that $v''$ be minimal such that $v< v''\leq v'$ and $C_{v''}$ has height $\geq s$.

 \subsubsection {Composite lines}\label {4.2.3}

A line is said to be a composite line if it is concatenation of lines joining boxes going strictly from left to right.

Two composite lines are said to be disjoint if they do not pass through a common box.

Notice that in the second step that between two neighbouring columns $C,C'$ of height $s$, there is for all $i=1,2,\ldots$,  a a unique composite line joining the box in $C\cap R_i$ to the box $C'\cap R_i$.  These lines are disjoint and their union meets every box in rows $\{R_i\}_{i=1}^s$ between $C,C'$.

\subsubsection {Moving lines and the third step}\label {4.2.4}

Consider the labelling of the horizontal lines given in \ref {4.2.2}. If there are neighbouring columns which overlap, then evaluation need not result in linear functions.  Thus the procedure must be modified in a third step.

This modification introduces lines which are not necessarily horizontal and do not necessarily join only adjacent boxes.  Moreover there may be more than one left (resp. right) going line from a given box. However all lines still go strictly from left to right; but they may ``hop over'' a box or they may cross, so we do not present them in figures as straight lines.

Our construction will be required to preserve the following property.

\

$(P_1)$.  For all $s \in \mathbb N^+$ and between any two neighbouring columns $C,C'$ of the same height $s$, there is permutation $\sigma$ of $1,2,\ldots,s$ and  a unique disjoint union of $s$ composite lines joining $C\cap R_i$ to $C'\cap R_{\sigma (i)}$.

\

However our construction will be required to also satisfy the additional property.

\

$(P_2)$.  Of the lines which make up the composite lines in a disjoint union joining the boxes of $C,C'$ given by $(P_1)$, all are labelled by $1$ except for exactly one which is labelled by $0$.

\

Step $3$ requires deleting some lines and adding others, but not too many!

The condition that there is a unique union of disjoint lines in $(P_1)$ has the immediate consequence that there is at most one line joining $b:=b_{u,v}$ to $b':=b_{u',v'}$ and only when $v<v'$ and in this case set $b<b'$.  We denote this line if it exists by $\ell_{b,b'}$.

The weight attached to $\ell_{b,b'}$ is the weight of the corresponding element of $\mathfrak m$ and denoted by  $\varpi_{b,b'}$.

Let $i$ (resp. $j$) be the entry in $b_{u,v}$ (resp. $b_{u',v'}$).  Then  $\varpi_{b,b'}= \alpha_i+\ldots+\alpha_{j-1}$.

In this we may replace the boxes by their entries.

The union of the lines $\ell_{i,j},\ell_{j,k}$ with $i,j,k$ increasing is a composite line denoted by $\ell_{i,j,k}$ (and so on).

 \

\textbf{Example}.  Let $\mathscr D$ be defined by the array $2,1,1,2$.  It admits two sets of neighbouring columns.  In the first step we draw the horizontal lines $\ell_{1,3},\ell_{3,4},\ell_{4,5},\ell_{2,6}$.  In the second step, we attach $1$ to the first and third lines and $0$ to the second and fourth lines.  The evaluation of the $2\times 2$ minor corresponding to the inner pair of neighbouring columns is just $x_{3,4}$.  However the second minor evaluates to $x_{3,4}x_{2,6}$.

To correct for this we make a ``rectangular'' modification. This entails removing $\ell_{2,6}$ and introducing the lines $\ell_{2,4},\ell_{3,6}$ with the former carrying $1$ and the latter $0$.  The evaluation of the $2 \times 2$ minor is unchanged, whilst the second minor evaluates to $x_{3,6}$.  Notice that the only disjoint union of composite lines passing through all the boxes between the outer neighbouring columns are $\ell_{1,3,6},\ell_{2,4,5}$.  Step three will be a careful generalization of this process.

\begin{figure}[H]
\begin{tikzcd}[row sep=tiny, 
column sep = 1 em]
1\arrow[-,r]&3\arrow[-,r]&4\arrow[-,r]&5\\
2\arrow[-,rrr]&&&6\\
&&\arrow[-,"{\text{ step 1}}"]&\\
\end{tikzcd}
\hspace{1 em}
\begin{tikzcd}[row sep=tiny, 
column sep = 1 em]
1\arrow[-,r,"1"]&3\arrow[-,r,"0"]&4\arrow[-,r,"1"]&5\\
2\arrow[-,rrr,"0"]&&&6\\
&&\arrow[-,"{\text{ step 2}}"]&\\
\end{tikzcd}
 \hspace{1 em}
 \begin{tikzcd}[row sep=tiny, 
column sep = 1 em]
1\arrow[-,r,"1"]&3\arrow[-,rrd,"0"]&4\arrow[-,r,"1"]&5\\
2\arrow[-,rru,"1"]&&&6\\
&&\arrow[-,"{\text{ step 3}}"]&\\
\end{tikzcd}
\caption*{The rectangular modification in step 3}
\end{figure}
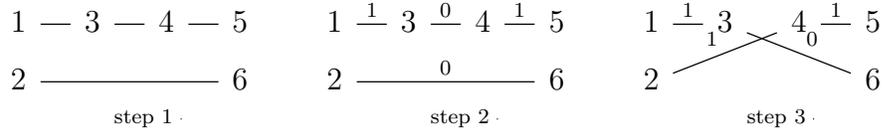

\subsubsection {Evaluation of $M^t$}\label {4.2.5}

Let $C_v,C_{v'};v<v'$ be neighbouring columns of height $s$.  We should like the evaluation of $M^t_{v,v'}$ on the linear subvariety $e+V$ to be the co-ordinate function on $x_{i,j}$ (namely $x^t_{i,j}:=x_{j,i}$) defined by the line with label $0$.

This meets with a further difficulty.  It arises when the expression $m_{v,v'}-d_{v,v'}$ introduced in \ref {4.1.7} is non-zero.  This is easily overcome as follows.  Let $L$ denote the set of entries in the boxes in the rows $R_u:u>s$ and lying between the columns $C_v,C_{v'}$. Observe that $$|L|=m_{v,v'}-d_{v,v'}. \eqno {(4)}$$

Set $x_{\ell,\ell}=1$ for all $\ell \in L$. This it where the ``identity translate'' motivated by Quantum Groups - \ref {4.1.6} - is needed.

We now assume property $(P_1)$.

Consider the disjoint union of composite lines specified by $(P_1)$. It follows from Lemma \ref {4.1.5} that the co-ordinate functions $x^t_{i,j}$ defined by the individual lines, lie in $\mathfrak m$ and are entries of $M^t_{v,v'}$.  Define the  multi-set $I'$ (resp. $J'$) given by the first co-ordinate to be the set of the starting (resp. finishing) point of the set of individual lines.  The non-intersection property means that these are actually sets, (that is  multiplicity-free), though the intersection $I'\cap J'$ can be non-empty.

The set of lines joining $i \in I'$ to $j \in J'$ defines a bijection $\varphi$ of $I'$ onto $J'$. Thus $|I'|=|J'|$ and this common cardinality is obviously the total number of boxes in $\mathscr D$ between $C,C'$ strictly above row $R_{s+1}$ minus $s$, that is
$$|I'|=|J'|=\sum_{i=v}^{v'}\min \{s,n_i\}-s=d_{v,v'}. \eqno {(5)}$$

Let $\varphi$ denote the restriction of $\mathbb C[\mathfrak m]^{\mathfrak p'}$ to $\textbf{1}+e+V$.

\begin {lemma}  Assume $(P_1)$ and let $C_v,C_{v'}$ be neighbouring columns of height $s$.

\

(i)  The evaluation of $M^t_{v,v'}$ on $\textbf{1}+e+V$ is the product of the co-ordinate functions on the lines labelled by $0$, lying between $C,C'$.

\

(ii) The image $\varphi$ has the same field of fractions as $\mathbb C[V]$.

\

(iii)  Suppose $(P_2)$ also holds, then the image of $\varphi$ is $\mathbb C[V]$.

\end {lemma}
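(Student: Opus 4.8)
The plan is to prove (i) by expanding the determinant that defines $M^t_{v,v'}$ and showing that it collapses to a single monomial, and then to read (ii) and (iii) off the shape of these monomials. For (i) I would first record the matrix whose determinant is $M^t_{v,v'}|_{\mathbf 1+e+V}$: its rows and columns are indexed by the boxes of the rectangle cut out between $C_v$ and $C_{v'}$, an off-diagonal entry is the coordinate function attached to a matrix unit $x_{i,j}\in\mathfrak m$, and it is nonzero on $e+V$ only if a line $\ell_{b,b'}$ has been drawn between the two boxes, in which case it evaluates to $1$ when that line carries the label $1$ and to the corresponding coordinate on $V$ when it carries the label $0$. By $(4)$ the only nonzero diagonal entries sit on $L$, where the identity translate forces the value $1$, and these boxes lie strictly below $R_s$, meeting neither $C_v$ nor $C_{v'}$. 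Now a nonzero term in the Leibniz expansion is a permutation $\tau$ every one of whose entries $(b,\tau(b))$ is either a drawn line, which forces $\tau(b)$ strictly to the right of $b$, or a fixed point in $L$. Since every line runs strictly from left to right, the directed edges $b\mapsto\tau(b)$ admit no cycle except a fixed point, so each box of $L$ must be fixed while the remaining edges split $\tau$ into disjoint chains from the pure-row boxes of $C_v$ to the pure-column boxes of $C_{v'}$, exhausting the boxes of $R_1,\dots,R_s$. Such a configuration is exactly a disjoint union of composite lines joining $C_v$ to $C_{v'}$, so by the uniqueness clause of $(P_1)$ there is a single admissible $\tau$; its contribution is the product of the labels of its segments, the $1$-labels and the $L$-diagonal giving $1$, so $\det M^t_{v,v'}$ equals, up to an immaterial sign, the product of the coordinate functions carried by its $0$-labelled segments. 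This proves (i).

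For (ii) I would exploit the bijection $p\mapsto z(p)$ of the second step between neighbouring pairs and $0$-lines, whose common cardinality is $\dim V$. The structural point is that the $0$-segments occurring in the unique $(P_1)$ union for a pair $p$ of height $s$ are $z(p)$ itself, which lies on the top row $R_s$ of the rectangle of $p$ and so belongs to no smaller rectangle, together with the lines $z(q)$ of the pairs $q$ whose rectangles are strictly nested in that of $p$; since a disjoint union uses each segment at most once, the monomial of (i) is $x_{z(p)}$ times a squarefree monomial in the $x_{z(q)}$ with $q$ strictly nested. Ordering the pairs by the height of their rectangles therefore makes the exponent matrix lower unitriangular, and one inverts it inside the field of fractions: by induction on height, $x_{z(p)}$ is recovered from the monomial of $p$ after dividing out the already-obtained coordinates of the nested pairs. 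Hence every coordinate of $V$ lies in the fraction field of the image, which gives (ii).

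Finally (iii) follows at once: under $(P_2)$ the unique $(P_1)$ union for $p$ carries exactly one $0$-label, necessarily $z(p)$, so the monomial of (i) degenerates to the single coordinate $x_{z(p)}$. Since $p\mapsto z(p)$ is a bijection onto the full set of $0$-lines, the images of the generators attached to the neighbouring pairs are precisely the coordinate functions on $V$, so the image of $\varphi$ is all of $\mathbb C[V]$.

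I expect the main obstacle to be the determinant collapse in (i), specifically ruling out any admissible $\tau$ that routes through $L$ or uses an alternative covering of the rectangle; this is exactly where the strict left-to-right rule and the uniqueness built into $(P_1)$ must be used in tandem. A secondary point requiring care underlies (ii) and (iii): that the $0$-lines appearing in the union of a pair are controlled by nesting and that the pair's own line $z(p)$ survives the rerouting of the third step. Both of these rest on the precise prescription of the labelling rather than on any general principle, and so must be checked against the construction itself.
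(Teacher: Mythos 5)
Your overall route coincides with the paper's: for (i) you collapse the standard (Leibniz) expansion of the minor to a single term via the uniqueness clause of $(P_1)$, for (ii) you argue by triangularity with respect to height (the paper's ``induction on height''), and for (iii) you use the bijection between neighbouring pairs and $0$-lines. However, your proof of (i) contains a concrete false step: the claim that ``the only nonzero diagonal entries sit on $L$''. The evaluation is on $\mathbf{1}+e+V$ with $\mathbf{1}$ the identity of $\End \mathbb C^n$, so \emph{every} diagonal entry of the minor equals $1$, including those indexed by boxes in rows $R_1,\dots,R_s$ lying strictly between $C_v$ and $C_{v'}$; equation $(4)$ merely counts $|L|$ and says nothing about vanishing. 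For instance, for the array $1,2,1,2$ and the height-$2$ pair $(C_2,C_4)$ one has $L=\emptyset$, yet the box $C_3\cap R_1$ (entry $4$) contributes the diagonal entry $x_{4,4}=1$ to the minor. Consequently your deduction that the chains of an admissible $\tau$ must exhaust the boxes of $R_1,\dots,R_s$ does not follow as stated: a priori $\tau$ could fix such a box (picking up a harmless factor $1$) and route its chains around it. The repair is to reverse the order of your two steps. The chains of any nonzero term always form a disjoint union of $s$ composite lines joining the boxes of $C_v$ bijectively to those of $C_{v'}$ --- this needs only the left-to-right acyclicity you correctly establish, not any covering property. Since $(P_1)$ asserts uniqueness among \emph{all} such disjoint unions (covering or not), these chains must coincide with the canonical union; that union passes through every box in rows $\leq s$, so the fixed points are then forced to be exactly $L$, and the term is the canonical one. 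This is precisely the paper's (very terse) argument, and your cycle/chain analysis is a useful elaboration of it once the false diagonal claim is excised.

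A secondary imprecision occurs in (ii): ``nesting'' is not the right description of which $0$-lines occur in the monomial of a pair $p=(C_v,C_{v'})$ of height $s$. With the rightmost labelling, the $0$-line of a pair $(D,D')$ of smaller height with $D$ strictly to the left of $C_v$ and $D'$ strictly between $C_v$ and $C_{v'}$ is placed just to the left of $D'$ and hence lies inside the rectangle of $p$, even though $(D,D')$ is not nested in $p$. In the same array $1,2,1,2$, the $0$-line $\ell_{2,4}$ of the pair $(C_1,C_3)$ appears in the monomial of $(C_2,C_4)$. This does not damage your argument: all such extraneous lines still come from pairs of strictly smaller height (lines at levels $>s$, and $0$-lines of other height-$s$ pairs, never occur as entries of the minor), so your exponent matrix remains unitriangular for the height ordering and the induction goes through; but the justification must be ``smaller height'', not ``nested''. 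Part (iii) is correct and agrees with the paper, granted (as you note) the fact from the construction that after Step $3$ the unique $0$-line in the union for $p$ is still the one attached to $p$ itself.
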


\begin {proof}

\

We claim that
$$I'=I\setminus \{L\},J'=J \setminus \{L\}. \eqno {(6)}$$

Indeed by Lemma \ref {4.1.5}, one has   $I' \subset I,J'\subset J$. Again $L$ lies in the diagonal entries of each of the blocks given by the columns $C_j: v<j<v'$ and therefore $x_{j,j}$ is an entry of $M_{v,v'}$.  On the other hand, by definition of $I',J'$, these index sets come from boxes distinct from those in rows $R_i:i>s$.  Thus it follows that $K$ has null intersection both with $I'$ and with $J'$.  Then combining equations $(2-5)$ we obtain $(6)$.

The monomial obtained from the (standard) development of the minor given by $\prod_{k \in L} x^t_{\ell,\ell}\prod_{i\in I'}x^t_{i,\varphi(i)}$ evaluates to the product of the co-ordinate functions on the lines to which $0$ is attached.

The uniqueness of the union of composite lines implies that there can be no other monomial in the (standard) development of the minor which has a non-zero evaluation.  Hence (i).

The lines carrying a $0$  occur on rows $R_i:i \leq s$ and between $C_v,C_{v'}$. Thus (ii) obtains by induction on height. When $(P_2)$ holds a minor restricts to a single co-ordinate function on $V$ and all  co-ordinate functions on $V$ obtain (by definition of $V$).   Hence (iii).

\end {proof}

\textbf{Remark}. By Lemma \ref {4.1.7}, the degree of the product of the matrix units coming from the lines between $C,C'$ is the degree of $\gr M_{v,v'}$.

\subsection {Separation}\label {4.3}

Admit the result (basically of Melnikov) in \cite {JM} that the number of generators of the polynomial algebra $S(\mathfrak m^*)^{\mathfrak p'}$ is the number of pairs of neighbouring columns in $\mathscr D$. Then by (ii) of Lemma \ref {4.2.5} it follows that $\varphi$ is injective.  However this result of Melnikov involved identifying and then counting hypersurface orbital varieties which is a tricky business especially outside type $A$.  Thus we might wish to give an alternative proof. For this it is enough to show that $P'(e+V)$ is dense in $\mathfrak m$.  In this part of the work has already been done for us by Ringel et al \cite {BHRR}.  Indeed let $\hat{e}$ be defined by placing $1$ on every horizontal line. Then \cite {BHRR} asserts that $P\hat{e}$ is dense in $\mathfrak m$ giving thereby an immensely complicated proof of Richardson's theorem; but one which nevertheless gives some help in constructing a Weierstrass section.  Moreover in \cite {B}, Baur has shown how to generalize their result to all classical Lie algebras.

The first step towards our goal is to prove the separation theorem below.

\subsubsection {}\label{4.3.1}

Return to the situation described by the second step.  Let us compute the number of horizontal lines carrying a $1$.  Since the number carrying a $0$ is just the number of pairs of neighbouring columns of the same height and so independent of permutation, we can assume by Lemma \ref {4.2.1}, that the $n_i:i=1,2,\dots,r$ are decreasing.  Thus we can write $\{n_i\}_{i=1}^r$ as $\{m_i^{k_i}\}_{i=1}^s$, where $m_i^{k_i}$ denotes $k_i$ copies of $m_i \in \mathbb N^+$, with the latter assumed strictly decreasing.  Taking $m_0=0$, we obtain $m_s-s=\sum_{i=1}^s(m_i-m_{i-1}-1)$.  Thus $m_s-s$ is the number of ``gaps'' in the sequence $m_1,m_2,\ldots,m_s$ and is a non-negative integer.

Set $\mathfrak h'=\mathfrak h\cap [\mathfrak r, \mathfrak r]$.  Its dimension is the cardinality of $\pi'$.

\begin {lemma}  The number of horizontal lines carrying a $1$ is $\dim \mathfrak h'-(m_s-s)\leq \dim \mathfrak h'$.
\end {lemma}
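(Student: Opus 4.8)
The plan is to obtain the desired number as the difference between the total number of horizontal lines and the number of those labelled $0$, and then to rearrange the resulting expression into the form $\dim\mathfrak h'-(m_s-s)$ by pure bookkeeping, invoking only the counts already in hand from Lemma \ref{4.2.1} and the discussion of \ref{4.3.1}.

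First I would record the relevant totals in the notation of \ref{4.3.1}, writing $\{n_i\}_{i=1}^r=\{m_i^{k_i}\}_{i=1}^s$ with $m_i$ the distinct heights and $k_i$ their multiplicities, so that $r=\sum_{i=1}^sk_i$ and $\sum_{i=1}^rn_i=\sum_{i=1}^sk_im_i=:n$ is the total number of boxes. As already noted, $\dim\mathfrak h'=|\pi'|=\sum_{i=1}^r(n_i-1)=n-r$. By Lemma \ref{4.2.1} the total number of horizontal lines is $n$ minus the maximal height, which in the present notation is $m_s$ (so that $m_s-s\ge 0$ is the gap count of \ref{4.3.1}); hence this total is $n-m_s$.

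Next I would count the lines carrying a $0$. By the prescription of the second step (\ref{4.2.2}) exactly one horizontal line is labelled $0$ for each pair of neighbouring columns, so the number of $0$'s is the number of such pairs. Since two neighbouring columns must have a common height, and among the $k_i$ columns of height $m_i$ the neighbouring pairs are precisely the $k_i-1$ consecutive ones, summing over the distinct heights gives $\sum_{i=1}^s(k_i-1)=r-s$ pairs, hence $r-s$ lines labelled $0$.

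Subtracting, the number of horizontal lines carrying a $1$ equals $(n-m_s)-(r-s)$, and it remains only to rewrite this:
$$(n-m_s)-(r-s)=(n-r)-(m_s-s)=\dim\mathfrak h'-(m_s-s).$$
Since $m_s-s$ is the non-negative gap count of \ref{4.3.1}, the right-hand side is at most $\dim\mathfrak h'$, giving the stated inequality. The whole argument is organised counting, so I expect no genuine obstacle; the only point needing care is the overloading of the symbol $s$, which is the maximal height in Lemma \ref{4.2.1} but the number of distinct heights in \ref{4.3.1}. Keeping firmly in mind that the maximal height is $m_s$ and that there are exactly $s$ distinct heights is precisely what makes the three counts align, after which the identity closes immediately.
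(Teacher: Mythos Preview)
Your argument is correct. The paper proceeds by a slightly different piece of bookkeeping: assuming the $n_i$ decreasing, it counts the lines carrying a $1$ directly as $\sum_{i=1}^s(k_i-1)(m_i-1)+\sum_{i=1}^{s-1}m_i$ and then compares this with $\dim\mathfrak h'=\sum_{i=1}^sk_i(m_i-1)$, the difference being $m_s-s$. Your route---total lines from Lemma \ref{4.2.1} minus the $r-s$ lines labelled $0$---is the complementary count and is arguably cleaner, since it avoids writing down the direct formula for the $1$-lines and instead uses two quantities (the total $n-m_s$ and the number $r-s$ of neighbouring pairs) that are already available. Either way the identity collapses to $(n-r)-(m_s-s)$ after one line of algebra; your only real caveat, the overloaded symbol $s$, is handled correctly.
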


\begin {proof}  Since the $n_i$ can be assumed decreasing, it follows easily that the number of horizontal lines carrying a $1$ is just $\sum_{i=1}^s(k_i-1)(m_i-1) +\sum_{i=1}^{s-1}m_i$, whilst the dimension of $\mathfrak h'$ is $\sum_{i=1}^sk_i(m_i-1)$.  Combined these prove the required assertion.
\end {proof}

\subsubsection {}\label{4.3.2}
%
% Recall that if a line joins $b_{u,v}$ to $b_{u',v'}$, then the weight attached to this line is $\alpha_i+\ldots+\alpha_{j-1}$, where $i$ (resp. $j$) is the entry in $b_{u,v}$ (resp. $b_{u',v'}$).   We denote the weight by $\varpi_{u,v}^{u'v'}$ or by $\varpi_{i,j}$, depending on which notation is the more convenient.

Let $K$ be the set of horizontal lines carrying a $1$, as described through the construction of \ref {4.2.2}.  For all $k \in K$, let $\varpi_k$ denote the weight of the vector in $\mathfrak m$ defined by this line (cf \ref {4.2.4}).

 We would like to show that $\mathfrak h'$ separates the $\{\varpi_k\}_{k \in K}$.  In general such questions, though only elementary linear algebra can be notoriously difficult.

 In part of our analysis we do not need the lines to be horizontal.  Ultimately we shall use the conclusions of (i) and (ii) of \ref {4.2.2} though no doubt a more general result is possible.  Yet necessarily the number of lines (carrying $1$) which can be separated must not exceed $\dim \mathfrak h'$. This will fail at the third step.  Thus whatever happens only some lines carrying a $1$ can be included and it is rather convenient to just assume these are all horizontal and given by the construction of \ref {4.2.2}.

  \subsubsection {}\label{4.3.3}

     Choose $ s \in \mathbb N^+$ such that $ht \mathscr D =s+1$. We can assume that $s>0$ since for $s=0$ all lines are horizontal and all carry a zero.   This is the case when the parabolic is the Borel.

   Let $\mathscr T'$ denote the tableau obtained from $\mathscr T$ by eliminating the lowest box on every column and let $K'$ denote its set of entries.  One easily checks that $\mathfrak h'$ is just the linear span of $\{\alpha_{k'}^\vee\}_{k' \in K'}$.

   Fix a pair of strictly ordered boxes $b,b'$ in $\mathscr T$ not necessarily on the same row.

   \begin {lemma} Let $C$ be a column of $\mathscr T$ and $k'$ an entry of $C\cap \mathscr T'$ in row $R_t$.  Then

   \

   (i) $\alpha_{k'}^\vee(\varpi_{b,b'})=0$, unless $b$ or $b'$ belong to $C$.

   \

   (ii) $\alpha_{k'}^\vee(\varpi_{b,b'})=0$, unless $b$ or $b'$ belongs to $C\cap R_{t'}$, with $t'\geq t$.
   \end {lemma}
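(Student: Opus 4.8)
The plan is to reduce both statements to a direct calculation of the pairing $\alpha_{k'}^\vee(\varpi_{b,b'})$ using the explicit formula for $\varpi_{b,b'}$ recorded in \ref{4.2.4}. Write $b=b_{u,v}$ and $b'=b_{u',v'}$ with $v<v'$, and let $i$ (resp.\ $j$) be the entry of $b$ (resp.\ $b'$). Then $\varpi_{b,b'}=\alpha_i+\alpha_{i+1}+\cdots+\alpha_{j-1}$, a consecutive sum of simple roots. In type $A_{n-1}$ the simple coroot $\alpha_{k'}^\vee$ pairs with a simple root $\alpha_\ell$ by the Cartan matrix, giving $2$ if $\ell=k'$, $-1$ if $|\ell-k'|=1$, and $0$ otherwise. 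Hence
\begin{equation*}
\alpha_{k'}^\vee(\varpi_{b,b'})=\#\{\ell\in[i,j-1]:\ell=k'\}\cdot 2-\#\{\ell\in[i,j-1]:|\ell-k'|=1\}.
\end{equation*}
The key reformulation is that this telescopes: $\alpha_{k'}^\vee(\alpha_i+\cdots+\alpha_{j-1})$ is nonzero exactly when $k'$ is one of the two endpoints $i-1,i$ or $j-1,j$ of the interval, i.e.\ when $k'\in\{i-1,i,j-1,j\}$, and otherwise vanishes. So the whole problem becomes: which boxes $b,b'$ have an entry adjacent (in the sense of the integers $k'-1,k',k'+1$) to the entry $k'$ sitting in $C\cap R_t$?

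**Translating adjacency of entries into tableau geometry.**

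The second step is to translate the condition $k'\in\{i-1,i,j-1,j\}$ back into statements about boxes, using the numbering scheme of \ref{4.1.4}: entries increase down each column and columns are filled left to right. Thus the entry $k'$ in box $b_{k',\cdot}$ at position $R_t$ in column $C$ has as its ``integer neighbours'' $k'-1$ and $k'+1$, which are the boxes immediately above and below $k'$ within the \emph{same} column $C$ (because consecutive integers in a single column differ by $1$), except at the top or bottom of the column where $k'\pm1$ jumps to an adjacent column. For part (i), I would argue that $\alpha_{k'}^\vee(\varpi_{b,b'})\neq 0$ forces one of $i-1,i,j-1,j$ to equal $k'$, and then show that under the column-filling rule this forces $b$ or $b'$ to lie in the same column $C$ as $k'$ (the boundary jumps being absorbed because $k'\in\mathscr T'$, so $k'$ is not the bottom box of its column and $k'+1$ stays in $C$). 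This yields (i).

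**Sharpening to the row bound and isolating the obstacle.**

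For part (ii) I would sharpen the above: granting that the relevant box, say $b$, lies in $C$, I must rule out the case where $b$ sits strictly above $R_t$, i.e.\ in some $R_{t'}$ with $t'<t$. Since within the column $C$ entries increase downward by steps of $1$, the entry of $b$ in row $R_{t'}$ is $k'-(t-t')$, so it can be a neighbour $i\in\{k'-1,k'\}$ of $k'$ only when $t-t'\in\{0,1\}$. The value $t-t'=1$ would put $b$ one row above $k'$; I would check that this contributes the coefficient corresponding to $\alpha_{k'}^\vee(\alpha_{k'-1})=-1$ being cancelled, or more precisely that in the consecutive sum $\alpha_i+\cdots+\alpha_{j-1}$ the simple root $\alpha_{k'-1}$ enters only through its effect on the \emph{lower} endpoint, so that the surviving non-vanishing contributions force $b$ or $b'$ into $C\cap R_{t'}$ with $t'\ge t$. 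The main obstacle I anticipate is the careful bookkeeping at the two endpoints of the interval $[i,j-1]$: one must distinguish whether $k'$ coincides with the lower endpoint $i$ (or $i-1$) versus the upper endpoint $j$ (or $j-1$), and verify that the telescoping genuinely vanishes in every off-diagonal column, since a naive count could leave spurious $-1$ terms from a single adjacent simple root. Handling these endpoint cases uniformly — rather than splitting into many subcases — is where the argument needs the most attention, and I would lean on the fact that $k'\in K'$ (so $k'$ is never the bottom box of a column) to streamline the boundary analysis.
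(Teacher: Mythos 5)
Your overall strategy is the same as the paper's: both reduce the lemma to pairing $\alpha_{k'}^\vee$ against the consecutive sum $\varpi_{b,b'}=\alpha_i+\cdots+\alpha_{j-1}$ and then track where the endpoints of the interval sit relative to $k'$, using the column numbering and the fact that $k'\in\mathscr T'$ forces the entry $k'+1$ to lie in $C$ (directly below $k'$, in row $R_{t+1}$). Your part (i) is correct and in fact slightly cleaner than the paper's case-by-case contrapositive: nonvanishing forces $k'\in\{i-1,i,j-1,j\}$, i.e. $i\in\{k',k'+1\}$ or $j\in\{k',k'+1\}$, and both of those entries are boxes of $C$.

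Part (ii), however, is garbled, and the trouble comes from mistranslating your own criterion. For the lower endpoint, $k'\in\{i-1,i\}$ means $i\in\{k',k'+1\}$, \emph{not} $i\in\{k'-1,k'\}$ as you write. The case $i=k'-1$ (the box one row above $k'$) that you flag as the ``main obstacle'' is not a nonvanishing case at all: if $i=k'-1$ then $k'=i+1\notin\{i-1,i\}$, and nonvanishing would then require $k'\in\{j-1,j\}$, which is impossible because $b'$ lies in a column strictly to the right of $C$, so $j$ exceeds every entry of $C$ and in particular $j\geq k'+2$ (here $k'+1\in C$ is used again). Your proposed cancellation check does come out right — this is essentially what the paper's own proof of (ii) does, verifying $-1+2-1=0$ when the interval $[i,j-1]$ contains $k'-1,k',k'+1$ — but you leave it unverified, you never rule out the upper-endpoint cases, and you never treat the symmetric case $b'\in C$. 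The simplest repair is to notice that (ii) is an immediate corollary of your own argument for (i): nonvanishing forces $b$ or $b'$ to have entry $k'$ or $k'+1$, and those boxes sit in $C\cap R_t$ and $C\cap R_{t+1}$ respectively, so the relevant box automatically lies in a row $R_{t'}$ with $t'\geq t$. As written, though, part (ii) does not constitute a proof.
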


   \begin {proof}  (i). Let $k_m$ (resp. $k_r$) be an entry of a column strictly to the left (resp. right) of $C$.  Then $k_m<k'<k_r-1$ (because $k'+1$ is an entry of $\mathscr D \cap C$).  Thus if $b$ is in a column strictly to the left of $C$ and $b'$ is in a column strictly to the right of $C$, then  $\varpi_{b,b'}=\alpha_{k_m}+\ldots + \alpha_{k_r-1}$ vanishes on $\alpha_{k'}^\vee$.

    Again if $b'$ (resp. $b$) is in a column strictly to the left (resp. right) of $C$, then $\varpi_{b,b'}$ ends (resp. begins) at $\alpha_{k_{m}-1}$ (resp. $\alpha_{k_r}$) and again vanishes on  $\alpha_{k'}^\vee$.

    For (ii), suppose $b' \in C$ and has entry $m'$.  By convention $b$ lies strictly to the left of $b'$.  Then $\varpi_{b,b'}$ ends $\alpha_{m'-1}$.  If $t'<t$, then $m'<k'$ and so the assertion results.  Finally suppose  $b \in C$ and has entry $m'$.  By convention $b'$ lies strictly to the right of $b$.  Then $\varpi_{b,b'}$ begins in  $\alpha_{m'}$ and as in the first part ends in some $\alpha_{m-1}$ with $m>k'+1$.  If $t'<t$, then $m'<k'$ and so the assertion follows.
   \end {proof}

 \subsubsection {}\label{4.3.4}

 Recall the notation and hypotheses of \ref {4.2.2}, in particular the labelling of the horizontal lines.

 Recall the definition of $K$ (resp. $K'$) given in \ref {4.3.2} (resp. \ref {4.3.3}).

 Through the above lemma and (ii) of \ref {4.2.2} we obtain the

 \begin {cor}   Set $ht \mathscr D =s+1$.

 \

 (i)  Suppose that there is just one column $C'$ of height $s+1$ and every other column has height $s'<s$. Let $k'$ be the entry of $R_s\cap C'$.  Then $\alpha^\vee_{k'}$ vanishes on $\varpi_k$, for all $k \in K$.

 \

 (ii)  Suppose that the number of columns of height $\geq s$ is strictly greater than one. Let $C'$ (resp. $C''$) be the leftmost (resp. rightmost) column of height $s+1$.  Then either $C'$ or $C''$ admits a box $b \in R_s$ with just one outgoing line $\ell_b$ with label $1$ and let $k'$ be the entry of that box.  Then $\alpha_{k'}^\vee$ is non-zero on exactly one element of $\{\varpi_k\}_{k \in K}$, namely that defined by $\ell_b$.

 \end {cor}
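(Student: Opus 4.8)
The plan is to deduce both statements from the localization Lemma \ref{4.3.3} together with observations (i) and (ii) of \ref{4.2.2}. The common mechanism is the following. Let $b\in R_s$ be a box lying in a column $C$ of height $s+1$ and let $k'$ be its entry; since $b$ is not the bottom box of $C$, it survives in $\mathscr T'$, so $k'\in K'$ and $\alpha_{k'}^\vee\in\mathfrak h'$. Applying part (ii) of Lemma \ref{4.3.3} with this $C$ and $t=s$, the value $\alpha_{k'}^\vee(\varpi_{b_1,b_2})$ vanishes unless one of the endpoints $b_1,b_2$ lies in $C\cap R_{t'}$ with $t'\geq s$, that is, is either $b$ itself (in $R_s$) or the bottom box of $C$ (in $R_{s+1}$). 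Now every $k\in K$ is a horizontal line carrying the label $1$, whereas by (i) of \ref{4.2.2} every horizontal line in $R_{s+1}$ carries the label $0$; hence no element of $K$ meets the bottom box of $C$. Therefore, for $k\in K$, the value $\alpha_{k'}^\vee(\varpi_k)$ can be non-zero only if the line $k$ passes through the single box $b$.

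For (i), the hypothesis that $C'$ is the unique column of height $s+1$ while every other column has height $<s$ means that $C'$ is the only column reaching rows $R_s$ and $R_{s+1}$. Thus the boxes $C'\cap R_s$ (carrying the entry $k'$) and $C'\cap R_{s+1}$ have no same-row neighbours, so no horizontal line whatsoever passes through either of them. By the mechanism above this forces $\alpha_{k'}^\vee(\varpi_k)=0$ for every $k\in K$, which is the assertion.

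For (ii), I would first invoke (ii) of \ref{4.2.2}: choosing a leftmost labelling at level $s$ at the left end and a rightmost labelling at the right end, one of the two boxes $b:=C'\cap R_s$ or $b:=C''\cap R_s$ carries exactly one line $\ell_b$ with label $1$, and, because $C'$ (resp.\ $C''$) is the extreme column of height $s+1$, this unique $1$-line is the outgoing one. Let $k'$ be the entry of $b$. By the mechanism above, $\alpha_{k'}^\vee$ already vanishes on $\varpi_k$ for every $k\in K$ not passing through $b$, and among the lines of $K$ through $b$ only $\ell_b$ survives. It then remains to verify that $\alpha_{k'}^\vee(\varpi_{\ell_b})\neq0$. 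Since $\ell_b$ is outgoing, $k'$ is its left-hand entry, so $\varpi_{\ell_b}=\alpha_{k'}+\cdots+\alpha_{j-1}$, where the far entry $j$ satisfies $j-k'\geq s+1\geq 2$ because the column of $b$ has height $s+1$; consequently $\alpha_{k'}^\vee(\varpi_{\ell_b})=2-1=1$. Hence $\alpha_{k'}^\vee$ is non-zero on exactly the one element $\varpi_{\ell_b}$ of $\{\varpi_k\}_{k\in K}$.

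The step I expect to require the most care is the appeal to (ii) of \ref{4.2.2}: namely that, exploiting the freedom between leftmost and rightmost labellings, one genuinely produces a box at the left or right extreme of the height-$(s+1)$ columns through which exactly one $1$-line passes, and that this line is the outgoing one rather than an incoming one. Once the support of $\alpha_{k'}^\vee$ has been localized to lines through the single box $b$, the remaining verification is the purely local coroot computation above and is routine.
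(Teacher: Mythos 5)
Your overall strategy is exactly the paper's: the paper derives this corollary in a single line from Lemma \ref{4.3.3} together with the observations of \ref{4.2.2}, and your ``mechanism'' (non-vanishing of $\alpha_{k'}^\vee(\varpi_k)$ forces the line $k$ to pass through $b$ itself or through the bottom box of $C$, and the bottom box is excluded because every horizontal line in $R_{s+1}$ carries $0$) together with your part (i) supplies precisely the details the paper leaves to the reader. Part (i) and the localization step are correct.

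In part (ii), however, one step fails as written. You claim that, since $C'$ (resp.\ $C''$) is the extreme column of height $s+1$, the unique $1$-line through $b$ must be the \emph{outgoing} (right-going) one, and your coroot computation treats only that case, writing $\varpi_{\ell_b}=\alpha_{k'}+\cdots+\alpha_{j-1}$. In the paper's usage an ``outgoing line'' from $b$ is any line incident to $b$, and the unique $1$-line can perfectly well enter $b$ from the left. Concretely, for the diagram with column heights $(s,s+1)$ one has $C'=C''$ equal to the right-hand column, and the only line through $b=C'\cap R_s$ is the horizontal line arriving from the left; it carries $1$ because there is no pair of neighbouring columns of height exactly $s$, hence no $0$ is placed at level $s$. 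The same happens for heights $(s,s+1,s)$ under a rightmost labelling, where the right-going line from $b$ carries $0$ and the left-going one carries $1$. The repair is immediate: if $b$ is the right endpoint of $\ell_b$, with left entry $i<k'$, then $\varpi_{\ell_b}=\alpha_i+\cdots+\alpha_{k'-1}=\varepsilon_i-\varepsilon_{k'}$, so $\alpha_{k'}^\vee(\varpi_{\ell_b})=-1\neq 0$, and the conclusion stands in both cases. A secondary point: \ref{4.2.2}(ii) only yields \emph{at most} one $1$-line through the chosen box, whereas the corollary asserts exactly one; to close this you must also check that under the hypothesis of (ii) the chosen box has at least one incident line carrying $1$. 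This is true (an isolated incident line is never a designated $0$-line, a leftmost labelling never places $0$ on a line leaving $C'\cap R_s$ rightwards, and a rightmost labelling never places $0$ on a line entering $C''\cap R_s$ from the left), but it is a short case analysis that your write-up, like the paper's, passes over in silence.
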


 \subsubsection {}\label{4.3.5}

Continue to take $ht \mathscr D=s+1$. We now show that $\{\varpi_k\}_{k \in K}$ is separated by $\mathfrak h'$.   More precisely

\begin {prop}  The $\{\varpi_k\}_{k \in K}$ restricted to $\mathfrak h'$ are linearly independent.
\end {prop}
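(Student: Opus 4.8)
The plan is to reduce the statement to a linear-algebra assertion about the incidence matrix $\bigl(\alpha_{k'}^\vee(\varpi_k)\bigr)_{k' \in K',\, k \in K}$ and then to induct, peeling off one tall column at a time. Suppose $\sum_{k \in K} c_k \varpi_k$ vanishes on $\mathfrak h'$; since $\mathfrak h'$ is spanned by $\{\alpha_{k'}^\vee\}_{k' \in K'}$ (see \ref{4.3.3}), it suffices to show every $c_k = 0$. Recall from Lemma \ref{4.3.1} that $|K| = \dim \mathfrak h' - (m_s - s) \le \dim \mathfrak h'$, so full column rank is at least numerically possible. I would induct on $\dim \mathfrak h' = |K'|$, the number of non-bottom boxes of $\mathscr T$; the base case $\dim \mathfrak h' = 0$ (the Borel) is vacuous since then $K = \emptyset$.

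For the inductive step I would invoke the dichotomy of Corollary \ref{4.3.4} with $ht\,\mathscr D = s+1$. In case (i), where a single column $C'$ has height $s+1$ and all others have height $< s$, the coroot $\alpha_{k'}^\vee$ attached to the box $b$ at $R_s\cap C'$ annihilates every $\varpi_k$. Here I would delete the bottom box of $C'$, producing a diagram $\mathscr D_1$ in which $b$ becomes a bottom box; because no other column reaches row $R_s$, no horizontal line at level $s$ is created or destroyed, so $K$ is unchanged while $\alpha_{k'}^\vee$ drops out of the spanning set, i.e. $\mathfrak h'_1 \subset \mathfrak h'$ has codimension one. The induction hypothesis gives independence of $\{\varpi_k\}_{k\in K}$ on $\mathfrak h'_1$, hence \emph{a fortiori} on $\mathfrak h'$.

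In case (ii), where at least two columns have height $\ge s$, Corollary \ref{4.3.4}(ii) produces an extreme column of height $s+1$ (say the leftmost, $C'$) whose box $b \in R_s$, of entry $k'$, has a \emph{unique} outgoing $1$-line $\ell_b$; writing $k_0 \in K$ for this line, we have $\alpha_{k'}^\vee(\varpi_{k_0}) \ne 0$ while $\alpha_{k'}^\vee(\varpi_k) = 0$ for all $k \ne k_0$. Evaluating the relation $\sum_k c_k \varpi_k = 0$ on $\alpha_{k'}^\vee$ then forces $c_{k_0} = 0$. To finish, I would again shorten $C'$ by deleting its bottom box, obtaining $\mathscr D_1$; I would show that $\alpha_{k'}^\vee$ leaves the spanning set of $\mathfrak h'_1$ and that the relabelling of lines converts \emph{exactly} $\ell_b$ from a $1$-line into a $0$-line, leaving every other label unchanged, so that $K_1 = K \setminus \{k_0\}$ and $K'_1 = K' \setminus \{k'\}$ with the full pairing structure preserved under the canonical renumbering. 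The induction hypothesis applied to $\mathscr D_1$ then yields independence of $\{\varpi_k\}_{k \ne k_0}$ on $\mathfrak h'_1 \subset \mathfrak h'$, and combined with $c_{k_0}=0$ this gives the claim.

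The main obstacle is precisely the bookkeeping in case (ii): verifying that shortening the extreme tall column alters the second-step labelling in the minimal way described, namely that the newly created neighbouring pairs of height-$s$ columns reabsorb a label $0$ onto $\ell_b$ (and only onto $\ell_b$), with all other $1$-lines and their weights, as well as all surviving coroots, carried over intact. This is where the leftmost/rightmost conventions of \ref{4.2.2} and its consequences (i),(ii) must be used carefully, and it is exactly the point at which one sees why \emph{both} labellings had to be allowed. The example $\mathscr D = (2,1,1,2)$ of \ref{4.2.4} is the prototype: shortening the outer column turns the former $1$-line into the $0$-line of a new neighbouring pair, the coroot $\alpha_{k'}^\vee$ disappears, and the induction closes.
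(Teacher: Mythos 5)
Your strategy coincides with the paper's: induct by deleting the bottom box of the extremal tall column singled out by Corollary \ref{4.3.4}, transfer the pairing matrix to the smaller tableau, and recover the last row of rank from the coroot $\alpha_{k'}^\vee$. However, the key bookkeeping step you say you ``would show'' in case (ii) is false in a sub-case which the paper treats separately. You claim that shortening $C'$ always ``converts exactly $\ell_b$ from a $1$-line into a $0$-line, leaving every other label unchanged, so that $K_1=K\setminus\{k_0\}$''. This holds only when $\mathscr T$ possesses a column of height \emph{exactly} $s$: then the shortened column acquires a neighbouring partner of height $s$, and the leftmost/rightmost freedom of \ref{4.2.2} lets the new $0$ land on $\ell_b$. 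If instead the height sequence of $\mathscr T$ jumps over $s$ (at least two columns of height $s+1$, none of height $s$ --- e.g. the array $3,3,1$ of Example 2 in \ref{4.3.6}), then the shortened column is the \emph{only} column of height $s$ in $\mathscr T_-$, there is no neighbouring pair of height $s$, no $0$ can be placed at level $s$ under either convention, and $\ell_b$ remains a $1$-line: $K_1=K$ while only $K'$ shrinks. The paper splits case (ii) along exactly this dichotomy: in the ``gap'' sub-case it invokes Lemma \ref{4.3.1} (the gap forces $|K|<|K'|$, so losing the coroot $k'$ is harmless), and the induction hypothesis for $\mathscr T_-$ --- same rows $K$, one fewer column --- already gives rank $|K|$, which transfers back to $\mathscr T$ because its matrix contains that of $\mathscr T_-$ as a submatrix. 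Your framework absorbs this fix (in that sub-case one does not even need $c_{k_0}=0$), but as written your induction does not close.

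A second, smaller point: the statement you defer as the ``main obstacle'' --- that the pairing structure is preserved under the canonical renumbering --- is precisely the Claim occupying most of the paper's proof, and it is not formal: it rests on two type-$A$ coincidences, namely that scalar products are unchanged by translation along the Dynkin diagram, and that the only entries which could change, those of the form $\alpha^\vee_{m'}(\varpi_{\ell_b})$ with $m'$ in $C$ and $\ell_b$ right-going from $C$, differ by $\alpha^\vee_{m'}(\alpha_{m_2})=0$. Both your case (i) and case (ii) rely on this, so until that computation is carried out the proposal remains a strategy outline rather than a proof.
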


\begin {proof} The proof is by induction of the number of boxes in $\mathscr T$.

Set $\ell'=\dim \mathfrak h'= |K'|$ and $\ell=|K|$.

We need to show that the $\ell \times \ell'$ matrix with entries $\{\varpi_{k}(\alpha^\vee_{k'})\}_{k\in K,k'\in K'}$ has rank $\ell$.

Recall the conclusion of the corollary. Let $C$ be either $C'$ or $C''$ in its conclusion admitting at most one outgoing line in $R_s$ with label $1$.  let $k'$ denote the entry of the box $b:=C\cap R_s$.  If there is no outgoing line with label $1$ from $b$, then hypothesis of (i) holds and we define $K_C:=\phi$.   Otherwise define $K_C:=\{k\}$, where $k\in K$ is the line defined by $\ell_b$.  In the latter case $\alpha^\vee_{k'}(\varpi_k)\neq 0$.

\

$(*)$.  Let $\hat{b}$ be the box containing $k'+1$.  Eliminate $\hat{b}$ and reduce by $1$ the label of every box having entry $>k'+1$.

\

This gives a new tableau $\mathscr T_-$. It has one less box than $\mathscr T$ and so we can assume that the conclusion of the proposition applies to it.

Identify the set $\hat{K}$ of \textit{all} lines in $\mathscr T$ not outgoing from the box $\hat{b}$, with those of $\mathscr T_-$.

Observe that $\hat{K}\supset K$, since the lines outgoing from $\hat{b}$ all carry a $0$.  Of course $\hat{K} \setminus K$ is the set of the remaining lines carrying a $0$.

It is convenient to consider the larger matrix with rows defined by $\hat{K}$.

\

\textbf{Claim.}  The larger matrix  defined by $\mathscr T_-$ with rows defined by $\hat{K}$ is that obtained by eliminating the column containing the label $k'$ from the larger matrix defined by $\mathscr T$ and rows defined by $\hat{K}$.

\

Let $C_1$ be a column  different from $C$.  Let $m'$ be the entry of a box $b$ in $C_1$ and consider the matrix elements obtained by evaluation at $\alpha^\vee_{m'}$.

Observe that if $C_1$ is to the left (resp. right) of $C$, then $m'<k'$ (resp. $m' >k'+1$).

Suppose $C_1$ is to the left of $C$ and $\ell_b$ an outgoing line from a box $b$ in $C_1$. Since $C$ has the maximal height, $\ell_b$ must end to the left of $C$, or at $C$. Thus its weight $\varpi_{\ell_b}$ must end in some $m-1$ with $m \leq k'+1$.

Suppose $C_1$ lies to the right of $C$.  Let $b$ be a box of $C_1$ and $\ell_b$ an outgoing line from $b$. Let $m$ be the entry of $b$, then $m>k'+1$. Let $\varpi_{\ell_b}$ be the weight of $\ell_b$.  If $\ell_b$ is right going then $\varpi_{\ell_b}$ must begin at $\alpha_m$. If $\ell_b$ is left going and does not end in $C$, then  $\varpi_{\ell_b}$ must begin at some $\alpha_{m'}$ with $m' >k'+1$.

Thus the procedure defined by $(*)$ does not change the above matrix elements, since (fortuitously!) scalar products in type $A$ do not change on translation of the Dynkin diagram.

Taking account of Lemma \ref {4.3.3}, we conclude that the only matrix elements that can possibly change are the $\alpha_{m'}^\vee(\varpi_{\ell_b})$, where $m'$ is the label of a box in $C$ and $\ell_b$ is a right going line from a box in $C$.
%evaluated at $\alpha_{m'}^\vee$.
In this $m'<k'$, otherwise these matrix elements are eliminated by $(*)$.

One can choose positive integers $m_1\leq m_2$ such that $\varpi_{\ell_b}=\sum_{i=m_1}^{m_2}\alpha_i$.  Since $k'+1$ is an entry of $C$, one has $m_2\geq k'+1$, whilst by Lemma \ref {4.3.3}(ii), we can assume $k' \geq m_1 \geq m'$, otherwise $\alpha_{m'}^\vee(\varpi_{\ell_b})=0$.

 Now $(*)$, leaves $m'$ fixed.  Again if $i <k'$ (resp. $i=k',i>k'$) then in the sum $(*)$ leaves $i$ fixed (resp. eliminates $i$, reduces $i$ by $1$).  Thus the new weight becomes $\varpi_{\ell_b}=\sum_{i=m_1}^{m_2-1}\alpha_i$.  Consequently the matrix element defined by $\mathscr T$ minus that defined by $\mathscr T_-$, differs by  $\alpha^\vee_{m'}(\alpha_{m_2})$, which (fortuitously!) equals zero.

 This proves the claim.

 \

Let us now show how the proposition results from the claim.

We first return to the matrices defined by $K$.  Then we show that the matrix obtained from $\mathscr T$ in which the column defined by $k'$ and the row defined by $K_C$ are eliminated, has rank at least $\ell-|K_C|$.

In $K_C$ is empty, then by the induction hypothesis the matrix defined by $\mathscr T_-$ has rank $\ell$ and by the claim so has the matrix defined by $\mathscr T$.

We are left with the case $K_C = \{k\}$. By definition $k$ is the unique outgoing line carrying a $1$ from $C$ to a column $C_1$ of height $\geq s$.

Recall that $ht \mathscr T = s+1$. Suppose there is no column of height $s$ in $\mathscr T$.  Then the sequence that defines $\mathscr T$ has a gap in the sense of \ref {4.3.1} and so $\ell'>\ell$, by Lemma \ref {4.3.1}.  In the matrix defined by$\mathscr T_-$, the dimension of $\mathfrak h'$, that is to say $\ell'$ is reduced by one, but the number of lines remains unchanged, so $\ell$ is also unchanged. By the induction hypothesis the matrix defined by $\mathscr T_-$  has rank $\ell$, so when the row defined by $k$ is removed, it comes to have rank $\ell-1$.  By the claim this is the rank of the matrix defined by $\mathscr T$ when the column defined by $k'$ and the row defined by $k$ are removed.  Recalling the column defined by $k'$ has only one non-zero entry and this occurs on the row defined by $k$, it follows that the matrix defined by $\mathscr T$ has rank $\ell$.

Finally suppose there is a column $C_2$ in $\mathscr T$ of height $s$.  Recall that $C$ has height $s+1$. Then in $\mathscr T_-$ there are at least two columns of height $s$, namely $C_2$ and the column obtained from $C$ by deleting $\hat{b}$.  Then $0$ can be placed on the line defined by $k$ in $\mathscr T_-$ using as necessary a left or right most labelling (\ref {4.2.2}).  Then this line will not be counted in the matrix defined by $\mathscr T_-$.  Yet by the induction hypothesis this matrix has rank $\ell-1$ and so, as in the previous paragraph,  the matrix defined by $\mathscr T$ has rank $\ell$.

This completes the proof of the proposition.

\end {proof}

 \subsubsection {}\label{4.3.6}

 \textbf{Example 1.}  This exemplifies the very last step of the above proof.

 Take $\mathscr T$ to be given by the array $1,3,3,2$.  Since $C_3\cap R_2$ has right going line labelled by $1$, we are forced to take $C'=C_2$ to apply Corollary \ref {4.3.4}(ii) and then $k'=3$ being the entry of $C'\cap R_2$.  Now $\alpha_{k'}^\vee$ is only non-vanishing on the weights defined by the lines $\ell_{3,6}$ and $\ell_{4,7}$.  The latter carries a zero, so does not contribute to the matrix.  Applying $(*)$, results in $\mathscr T_-$ being given by the array $1,2,3,2$.  Passing to this new array the line $\ell_{3,6}$ (which is unique $k \in K$ for which $\alpha_{k'}^\vee(\varpi_k)\neq 0$ and so is eliminated in computing the minor) is replaced by the line $\ell_{3,5}$.   Thus the latter also needs to be eliminated.  This is possible as there is a pair of neighbouring columns of height $2$, and we take a leftmost labelling by $0$ in $\mathscr T_-$.  By contrast for the array $2,3,3,1$ we must take a rightmost labelling.

  \textbf{Example 2.} This exemplifies the penultimate step of the above proof.

  Take $\mathscr T$ to be given by the array $3,3,1$.  It has a gap. Take $C=C_2$.  It has a left going line $\ell_{2,5}$ from $b_{2,2}$ which has entry $5$. Applying $(*)$ of \ref {4.3.5}  results in $\mathscr T_-$  being given by the array $3,2,1$.  It has no gaps and so the rank of the matrix it defines is $3$.  Then we can simply eliminate the line corresponding to $\ell_{2,5}$.  This drops the rank by $1$.  Thus we conclude that the matrix defined by $\mathscr T$ has rank $3$ as required.

  \

  \textbf{Remark.}  Inspection of our reduction shows that there is always an $\ell \times \ell$ minor in the matrix defined by $\mathscr T$ which is triangular up to permutations of rows and columns, whose diagonal entries are $\pm 1$.

\subsection{Performing Step $3$}\label{4.4}

\

Recall that after the first step (\ref {4.2.1})
all the horizontal lines on adjacent boxes in a given row are joined.

In the second step the lines are labelled by either $1$ or $0$ as described in \ref {4.2.2} and here we shall use the rightmost labelling by $0$. This means that for each pair of neighbouring columns $C,C'$ of height $i$, the rightmost line on the composite line joining $C\cap R_i$ to $C'\cap R_i$ is labelled by $0$, all other lines on this composite line are labelled by $1$.

Any two boxes on the same row are joined by a composite line, thus property $(P_1)$ holds, but property $(P_2)$ may fail.

To obtain Property $(P_2)$ without losing property $(P_1)$ is in principle quite easy.  One selects a pair of neighbouring columns $C,C'$ of height $i$. One then ``gates'' lines with a $0$ on previous rows lying between $C,C'$ and removes some specific lines on $R_i$. Then the boxes losing a left or right going line either by gating or removal are rejoined to recover $(P_1),(P_2)$.  It turns out that gating, removing and rejoining is ``canonical'', so the reader can either guess the way that Step $3$ is established or read the details below.

\

A gated or ungated line will always refer to a line carrying a $0$.

\

Given two boxes $b,b'$, not necessarily in the same row, we write $b<b'$ (resp. $b\leq b'$) if $b$ lies in a column strictly to the left (resp. to the left) of $b'$. Note that equality does not mean that the boxes are the same, merely that they are on the same column.  the strictly inequality means that $b$ can be connected to $b'$ by a line which may form part of a composite line going strictly monotonously from left to right.

\subsubsection{}\label{4.4.1}

For all $i \in \mathbb N^+$, let $R^i$ denote the union of the rows $\{R_j\}_{j=1}^i$.

The modification to obtain $(P_2)$ is made inductively down the rows. At the $i^{th}$ (induction) stage lines carrying a $0$ in $R^{i-1}$ are gated, some lines in $R_i$ are removed and then new lines are drawn and labelled.  These new lines need \textit{not} be horizontal and may slide between columns.

Call a box left (resp. right) extremal if it has no left (resp. right) going lines.  The left (resp. right) extremal boxes at Step $2$ are exactly those which have no adjacent box to the left (resp. right) in $\mathscr D$.

Our construction does not add a left (resp. right) going line to a box which has no left (resp. right) going line.  Thus the extremal boxes remain the same.

%If a line between two boxes $b,b'$ is gated (or removed) we regard this as if there is no line between them.  In this sense $b$ (resp. $b'$) is missing a right (resp. left) going line.  Then the boxes are reconnected so that the number of outgoing lines from a given box which are not gated is preserved.

A line can consist of a single box $b$.  This occurs exactly when $b$ is isolated in its row.

\subsubsection{}\label{4.4.2}

At the $(i-1)^{th}$ stage, we assume as a first part of the induction hypothesis, that the following modification of $(P_1)$ holds.

\

$(P_1^{i-1})$.  The boxes in $R^{i-1}$ lie on a unique disjoint union of $(i-1)$ composite lines.

%$L_1,L_2,\ldots,L_{i-1}$.

\

%N.B.  After the $i^{th}$, the condition $(P_1^{i-1})$ does \textit{not} imply $(P_1^i)$ because composite lines need not be horizontal.
%
%\

If there are no ungated lines carrying a $0$ in $R^{i-1}$ between neighbouring columns of height $i$, then $R_i$ (which consists of a single composite line) is adjoined to $R^{i-1}$.

\subsubsection{}\label{4.4.3}

The second part of the induction hypothesis is that the lines in $R^{i-1}$ that are \textit{not} gated (so by convention carry a $0$) and lie between the neighbouring columns of height $i$, join $u$ pairs of boxes $b_j,b_j':j=1,2,\ldots,u:b_j, b_j' \in R^{i-1}$ with $b_j<b_j'\leq b_{j+1}$.

Only the last part of this property is non-trivial. It will be called the strict non over-lapping property of the ungated lines at the $(i-1)^{th}$ step.

\subsubsection{}\label{4.4.4}
 Fix a pair of neighbouring columns $C,C'$ of height $i$.

 \

 $(*)$.  Consider only the ungated lines in $R^{i-1}$ which lie \textit{between} $C,C'$.

 \

To simplify notation we shall take the labelling of the pairs of boxes $b_j,b_j'$ given in \ref {4.4.3} to just refer to the lines between $C,C'$.  We may assume $u>0$, otherwise there are no such lines.

Gate the lines between the pairs $b_j,b_j':j=1,2,\ldots,u:b_j, b_j' \in R^{i-1}$.  (These all carry a zero.)

Recall that the composite line (which lies entirely in $R_i$), joining $C\cap R_i$ to $C'\cap R_i$ consists of horizontal lines between adjacent boxes $b''_1<b''_2<\ldots< b''_v$. It has just one horizontal line with label $0$.  Since we are taking a rightmost labelling of $0$, this line joins $b''_{v-1}$ to $b''_v$.

By $(*)$, it follows that $b_v'' \geq b'_u$, hence $b_v''> b_u$ and  $b_1'' \leq b_1$, hence $b_1''<b_1'$.

\subsubsection{}\label{4.4.5}

 Choose $v_1$ maximal such that $b_{v_1}'' < b_1'$.  This forces $v_1<v$.  If $b_1\geq b''_{v_1+1}$, then $b_1'>b''_{v_1+1}$ contradicting the choice of $v_1$ so we also have $b_1<b''_{v_1+1}$.

 Delete the lines joining $b_{v_1}'',b_{v_1+1}''$ and $b_{v-1}'',b_v''$.  Since we are using a rightmost labelling, this is just one line if $v_1=v-1$ and carries a $0$, otherwise it is two lines the former carrying a $1$, the latter a $0$.

 \subsubsection{}\label{4.4.6}

 Join the boxes $b_{v_1}'',b_1'$ with a line carrying a $1$ and the boxes $b_u,b_v''$ with a line carrying a $0$. The latter is the unique new ungated line at the $i^{th}$ step between $C,C'$.

 \subsubsection{}\label{4.4.7}

 By the strict non over-lapping of ungated lines at the $(i-1)^{th}$ step, as defined in \ref {4.4.3}, we obtain a sequence of boxes and inequalities
 $$b_1<b_1'\leq b_2<b_2'\leq \ldots b_{u-1}<b'_{u-1}\leq b_u <b'_u.$$

  \subsubsection{}\label{4.4.8}

   By \ref {4.4.6}, $b_1'$ (resp. $b_u$) has a left (resp. right) going line.

  Suppose $v_1=v-1$.  Then $b_i:1\leq i <u$ (resp. $b'_i:1<i\leq u$) has no right (resp. left) going line. Then join $b_j$ to $b'_{j+1}$ for all $j \in [1,u-1]$.  This operation is trivial if $u=1$.

  \subsubsection{}\label{4.4.9}

  The case $v_1<v-1$ described below may seem more complicated but follows the same principle.  Indeed If $b,b'$ are joined by a line in $R^{i-1}$ carrying a $0$, gated at the $i^{th}$ step or a line in $R_i$ which is removed, then our construction has as its goal to give a right going line from $b$ and a left going line from $b'$. (The reader may check that there is only one way in which this can be done.)

  The complication is caused by columns of height $>i$ between $C,C'$, whilst we recall that the latter have height $i$. We shall label these columns in a particular fashion.  This can be achieved in one operation but it is less confusing when presented in three operations.

   \subsubsection{}\label{4.4.10}

   \

  \textbf{Operation $1$.} First label the columns of height $>i$ between $C,C'$ by  $C_1,C_3,\ldots, C_m$ going from left to right.
  Set $C_0=C,C_{m+1}=C'$.

   \

   \textbf{Operation $2$.}  Take $k\geq 1$ maximal such that there are no ungated lines in $R^{i-1}$ between $C_0,C_k$ and set $C_0'=C_k,C_1=C_{k+1}$.
    In this either $C_0'=C_0$ or there are columns of height $>i$ between $C_0,C_0'$.

     By choice of $k$, there is at least one ungated line between $C_k,C_{k+1}$ and that $b''_{v_1}$ lies in $C_0'$.  Repeat this process with $C_1$ replacing $C_0$ and so on.  Forget the labelling of operation $1$!

   \

   \textbf{Operation $3$.}  Now again relabel the columns by setting $C_{2j}=C_j, C_{2j+1}=C'_j$, for all $j=0,1,\ldots$.  Forget the labelling of operation $2$!

   \

    This process has the following two results.

    \

   \textbf{$A$.}  For $j$ even, one can have $C_j=C_{j+1}$ or being distinct with possibly some columns of height $>i$ between them; but no ungated lines in $R^{i-1}$ between them.

   \

   \textbf{$B$.}  For $j$ odd, the columns $C_j,C_{j+1}$ are distinct with no columns of height $>i$ strictly between them; but at least one ungated lines in $R^{i-1}$ between them.

   \

   %Of course we could have said this in the first place but the previous remarks show that such a labelling is possible.

   Note that in this notation $C'=C_{2n}$ for some positive integer $n$.

   Let $b''_{v_j}$ be the entry of $C_j\cap R_i$, for $j=1,2,\ldots,2n$.  By the remark in Step $2$, this is compatible with our previous definition of $b''_{v_1}$.  Again one checks that $v=v_{2n}$.

   %such that only between $C_j,C_{j+1}:j\in [1,m]$, with $j$ odd are there at least one ungated line carrying zero in $R^{i-1}$, but allowing $C_j=C_{j+1}:j\in [2,\ldots,m-1]$, for $j$ even and allowing there to be other columns of height $>i$ between $C_j,C_{j+1}$, again for $j$ even.  For the present purposes the latter columns are irrelevant.

   %For all $j=1,2,\ldots,m+1$, let $b''_{v_{j}}$ be the entry of $C_j$ in row $R_i$.  In this note that $v_1+1=v_2,v_{m+1}=v$.

  Now take $j\in \{2,3,\ldots,2n-1\}$.

  For $j$ even, let $u_j$ be maximal such that $b_{u_j}<b''_{v_j}$.
  For $j$ odd, let $u'_j$ be minimal such that $b''_{v_j}<b'_{u'_j}$.

  Join $(b_{u_j},b''_{v_j})$, for $j$ even  and join $(b''_{v_j},b'_{u'_j})$, for $j$ odd, with a line carrying a $1$.  Since there are no ungated lines between $C_{2j},C_{2j+1}$ one has $u'_j=u_j+1$. Thus compared to \ref {4.4.8}, the pairs $(b_{u_j},b'_{u'_{j+1}})$, with $j$ even should be exactly the pairs in $R^{i-1}$ which are \textit{not} joined.

  Now take $j\in \{1,2,\ldots,2n-1\}$.  Then the lines in $R_i$ joining $b''_{v_j},b''_{v_{j+1}}$ with $j$ odd, must be deleted.  (For $j=1,2n-1$, this was already undertaken in \ref {4.4.5}.)
  Notice that if we set $b'_{u'_1}=b'_1,b_{u_{2n}}=b_u$, then the pairs
  $(b''_{v_1},b'_{u'_1}),(b_{u_{2n}},b''_{v_{2n}})$ were already joined in \ref {4.4.6}.

 This achieves the  goal announced in \ref {4.4.9}.

  \subsubsection{}\label{4.4.11}

\begin {prop} After the $i^{th}$ stage has been carried out $(P_1^i)$ holds.  Moreover the strict non-overlapping property of the ungated lines holds at the $i^{th}$ stage.
\end {prop}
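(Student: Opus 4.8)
The plan is to verify the two induction claims---the modified property $(P_1^i)$ and the strict non-overlapping of ungated lines at the $i^{th}$ stage---by carefully tracking the net effect on lines of the gating, deletion and rejoining operations carried out in \ref{4.4.4}--\ref{4.4.10}. I would proceed by fixing a pair of neighbouring columns $C,C'$ of height $i$ and working entirely within the strip between them, since lines outside this strip (and lines lying wholly in $R^{i-1}$ that are gated) are untouched and, by the induction hypothesis $(P_1^{i-1})$, already lie on a unique disjoint union of $(i-1)$ composite lines. The key bookkeeping observation is that every box which loses a left (resp. right) going line through gating, through the deletions of \ref{4.4.5}, or through the removals of \ref{4.4.10}, is immediately supplied a replacement incoming (resp. outgoing) line by the rejoining rule; this is precisely the ``canonical'' principle announced in \ref{4.4.9}. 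Thus I would first establish a local balancing lemma: after the $i^{th}$ stage, each box in the strip has exactly one left going and one right going line, except for the genuine extremal boxes, whose status is preserved by \ref{4.4.1}.

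Granting the balancing, $(P_1^i)$ amounts to showing that the boxes of $R^i$ decompose into exactly $i$ disjoint composite lines with no branching. Here I would argue that each composite line entering the strip through $C\cap R_t$ for $t<i$ is extended monotonically from left to right and exits through $C'\cap R_{\sigma(t)}$, while the single new composite line picks up $R_i$; disjointness follows because the rejoining pairs $(b_{u_j},b''_{v_j})$ for $j$ even and $(b''_{v_j},b'_{u'_j})$ for $j$ odd are chosen with the extremality conditions $u'_j=u_j+1$, so no box acquires two lines of the same orientation. The chain of inequalities displayed in \ref{4.4.7}, namely $b_1<b_1'\leq b_2<\cdots\leq b_u<b'_u$, is exactly what guarantees that the gated pairs and the relabelled columns interleave without conflict, which is where I expect the verification to be delicate. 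Uniqueness of the disjoint union follows from the observation in \ref{4.2.4} that there is at most one line joining any ordered pair of boxes, together with the fact that all lines strictly increase the column index.

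For the second assertion, the strict non-overlapping of the ungated lines at the $i^{th}$ stage, I would note that by construction \ref{4.4.6} the \emph{only} new ungated line between $C,C'$ joins $b_u$ to $b''_v$; every previously ungated line in $R^{i-1}$ lying between $C,C'$ has been gated in \ref{4.4.4}. Hence the ungated lines relevant at stage $i$ are the single new line in this strip together with the ungated lines inherited (unchanged) from strips governed by neighbouring columns of other heights. I would then check that these satisfy the inequality $b_j<b_j'\leq b_{j+1}$ of \ref{4.4.3}: within a single strip there is now one ungated line, so overlapping is vacuous, and across strips the inequality follows because distinct neighbouring-column strips of the same height are separated by a column of height $\geq i$ on which no ungated line can begin or end. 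The main obstacle, I expect, is the case analysis of \ref{4.4.10} when $v_1<v-1$, where several columns of height $>i$ sit between $C,C'$; there one must confirm that the three-operation relabelling genuinely yields the alternating pattern $A,B$ and that the pairs $(b_{u_j},b'_{u'_{j+1}})$ left unjoined in $R^{i-1}$ match exactly the pairs newly rejoined, so that no box is left unbalanced and the endpoint identifications $v=v_{2n}$ and $b''_{v_1}\in C_0'$ hold as claimed.
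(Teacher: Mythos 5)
Your overall structure (induction on $i$, bookkeeping of which boxes lose and regain lines, and the treatment of the non-overlapping assertion) matches the paper's, and your handling of the second assertion is essentially the paper's own argument: the new ungated lines lie in $R_i$ between successive columns of height $i$, and the inherited ones satisfy non-overlapping by the induction hypothesis. However, there is a genuine gap in your proof of the uniqueness clause of $(P_1^i)$. You claim uniqueness follows from the fact that there is at most one line joining any ordered pair of boxes and that lines go strictly left to right. This is insufficient, because gated lines are \emph{not} removed from the tableau: they must remain, since they carry the $0$'s needed for the minors associated with neighbouring columns of height $<i$. Consequently, after stage $i$ a box such as $b_u$ has \emph{two} right-going lines (its gated line to $b'_u$ and the new line to $b''_v$), which also falsifies your ``balancing lemma'' as stated if gated lines are counted; and if they are not counted, the lemma says nothing about whether a second disjoint union of composite lines covering all of $R^i$ could be assembled using gated lines. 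That possibility is precisely the danger, and no appeal to ``one line per ordered pair of boxes'' excludes it.

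The paper closes this gap with a propagation argument that your proposal is missing. Since the $0$-labelled line in $R_i$ joining $b''_{v_{2n-1}},b''_{v_{2n}}$ is \emph{completely} removed, the original horizontal composite line in $R_i$ cannot occur in any second union; the box $b''_{v_{2n}}$ is not left extremal, so in any covering union it must be entered via the new line from $b_u$, which forces the gated line joining $b_u,b'_u$ out of the second union; the removal of the line joining $b''_{v_1},b''_{v_2}$ similarly excludes the gated line at the other end, and continuing this cascade one shows that \emph{no} gated line $(b_j,b'_j)$, $j\in[1,u-1]$, can appear in any disjoint union passing through every box of $R^i$. Only after all gated lines have been so excluded does uniqueness reduce to the uniqueness of the union built from non-gated lines, where your counting argument (and the induction hypothesis $(P_1^{i-1})$) applies. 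Without this step your proof establishes existence of the disjoint union but not its uniqueness, and uniqueness is exactly what $(P_1)$ must deliver for the evaluation of the minors in Lemma \ref{4.2.5} to produce a single monomial.
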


\begin {proof}

If $b,b'$ are joined by a line in $R^{i-1}$ carrying a $0$, gated at the $i^{th}$ step or a line in $R_i$ which is removed, then (\ref {4.4.9}) that our construction gives a right going line from $b$ and a left going line from $b'$.

We conclude that after the $i^{th}$ step there is unique disjoint union of composite lines, whose union passes through every box in  $R^i$ and does not include any line which has been gated or (of course!) removed.  Moreover it takes the left extremal boxes to the right extremal boxes up to a permutation (which in general is different to the one obtained from $(P_1^{i-1})$).

Of course this is not quite the end of the story since gated lines are not actually removed, indeed they are needed for minors associated with neighbouring columns of height $<i$.

To complete the proof, let us suppose that there is a second disjoint union of composite lines whose union passes through every box in $R^i$. In this we can assume that there are just two columns of height $i$, as the general case is similar.  Moreover we can assume that the line in $R_i$ carrying a $0$ has been removed, otherwise the lines in $\mathscr T$ are unchanged and $(P_1^{i-1})$ gives $(P_1^i)$.

The \textit{complete!} removal of the line in $R_i$ joining $b''_{v_{2n-1}},b''_{v_{2n}}$ carrying a zero means that the original horizontal composite line in $R_i$ cannot be used in the second union.  Yet $b''_{v_{2n}}$ is not left extremal and is joined at the $i^{th}$ step (see \ref {4.4.6}) by a left going line meeting $b_u$. Consequently the gated line joining $b_u,b_u'$ cannot appear in a composite line of our second union. Again since the line joining $b''_{v_1},b''_{v_2}$ is \textit{completely!} removed, the gated line joining $b_1,b_2'$ cannot appear in a composite line of our second union.  Continuing on in this fashion we conclude that successively none of the gated lines joining $b_j,b_j'$, for $j\in [1,u-1]$ can appear in a composite line of our second union.

Thus uniqueness follows from uniqueness in the first part in which the use of gated lines was forbidden.

The last assertion of the proposition is immediate for the new ungated lines added at the $i^{th}$ stage.  Indeed they all lie in $R_i$ and between successive columns of height $i$.  On the other hand the remaining ungated lines lie outside the columns of height $i$ and were obtained before or at the $(i-1)^{th}$ stage, so satisfying the strict non-overlapping property at the $(i-1)^{th}$ stage by the induction hypothesis.
\end {proof}

  \subsubsection{}\label{4.4.12}

For neighbouring columns $C,C'$ of height $s$, we obtain $(P_1)$ from $(P_1^s)$.  Moreover $(P_2)$ obtains, since there is just one line carrying a zero in these disjoint composite lines, namely that joining $b_u,b''_v$, in the notation of \ref {4.4.6}.

This completes Step $3$.

The proof of the proposition shows that it may be amply illustrated when there are just columns of heights $1,2,3$ and when just the first and second steps are carried out.  An example is given in Figure $1$.  In Figure $2$ we give an example when both $R_1,R_2$ have gated lines at stage $3$.

\

\textbf{Example.} Let $\mathscr T$ be defined by the array $(2,3,1,1,1,3,3,,1,1,1,1,3,3,3,1,1,2)$ 
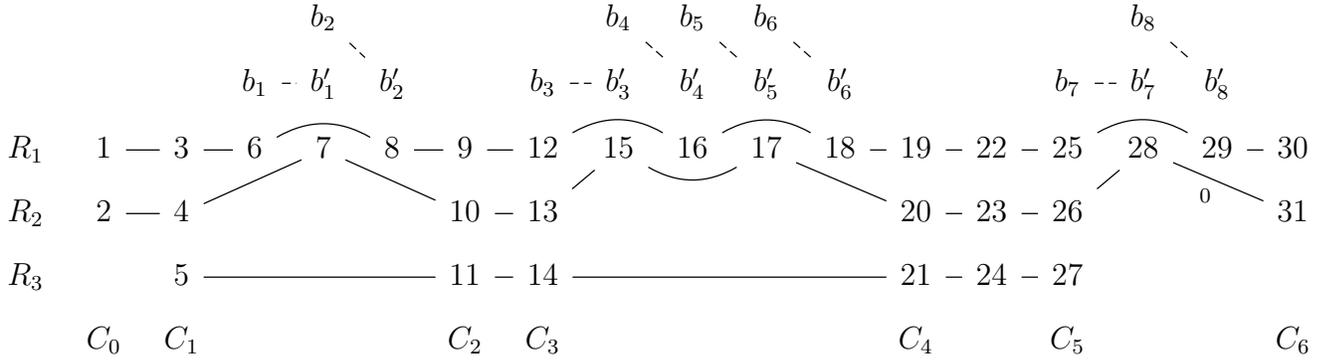
\begin{figure}[H]
\begin{tikzcd}[row sep=tiny, 
column sep = 0.5 em]
&&&&b_2\arrow[-,dashed,rd]&&&&b_4\arrow[-,dashed,rd]&b_5\arrow[-,dashed,rd]&b_6\arrow[-,dashed,rd]&&&&&b_8\arrow[-,dashed,rd]&&\\
&&&b_1\arrow[-,dashed,r]&b_1'&b'_2&&b_3\arrow[-,dashed,r]&b'_3&b'_4&b'_5&b'_6&&&b_7\arrow[-,dashed,r]&b'_7&b'_8&\\
R_1&1 \arrow[-,r] & 3 \arrow[-,r] &6\arrow[-,rr, bend left] &7 &8 \arrow[-,r] &9 \arrow[-,r] &12 \arrow[-,rr, bend left] &15\arrow[-,rr,bend right,swap] &16 \arrow[-,rr, bend left] &17 &18 \arrow[-,r] &19 \arrow[-,r] &22 \arrow[-,r] &25 \arrow[-,rr, bend left] &28 &29 \arrow[-,r] &30\\ 
R_2&2\arrow[-,r]&4\arrow[-,rru]&&&&10 \arrow[-,llu]\arrow[-,r]&13\arrow[-,ru]&&&&&20\arrow[-,llu]\arrow[-,r]&23\arrow[-,r]&26\arrow[-,ru]&&&\arrow[-,llu,"0"]31\\
R_3&&5\arrow[-,rrrr]&&&&11\arrow[-,r] &14\arrow[-,rrrrr] &&&&&21\arrow[-,r] &24\arrow[-,r] &27\\
&C_0&C_1&&&&C_2&C_3&&&&&C_4&&C_5&&&C_6
\end{tikzcd}
\caption{The gated lines after stage 1 are those joining the pair $(b_i,b_i'):i=1,2,\cdots,8$ the new lines obtained at stage following \ref{4.4.9} are drawn. Note that $P_1^2$ is satisfied.}
\end{figure}

\textbf{Example.} Let $\mathscr T$ be defined by the array $(3,2,1,1,2,3)$, for the step 3 one has a gating in each row.
\begin{figure}[H]
\begin{tikzcd}[row sep=tiny, 
column sep = 1 em]
1\arrow[-,r]&4\arrow[-,r]&6\arrow[-,r]&7\arrow[-,r]&8\arrow[-,r]&10\\
2\arrow[-,r]&5\arrow[-,rrr]&&&9\arrow[-,r]&11\\
3\arrow[-,rrrrr]&&&&&12\\
&&&\arrow[-,"\text{step 1}"]&&&\\
\end{tikzcd} \hspace{1em}
\begin{tikzcd}[row sep=tiny, 
column sep = 1em]
1\arrow[-,r,"1"]&4\arrow[-,r,"1"]&6\arrow[-,r,"0"]&7\arrow[-,r,"1"]&8\arrow[-,r,"1"]&10\\
2\arrow[-,r,"1"]&5\arrow[-,rrr,"0"]&&&9\arrow[-,r,"1"]&11\\
3\arrow[-,rrrrr,"0"]&&&&&12\\
&&&\arrow[-,"\text{step 2}"]&&&\\
\end{tikzcd} \hspace{1em}
\begin{tikzcd}[row sep=tiny, 
column sep = 1 em]
1\arrow[-,r,"1"]&4\arrow[-,r,"1"]&6\arrow[-,rrd,"0" ]&7\arrow[-,r,"1"]&8\arrow[-,r,"1"]&10\\
2\arrow[-,r,"1"]&5\arrow[-,rru,"1"]&&&9\arrow[-,r,"1"]&11\\
3\arrow[-,rrrrr,"0" ]&&&&&12\\
&&&\arrow[-,"\text{stage 3}"]&&&\\
\end{tikzcd} \hspace{1 em}
\begin{tikzcd}[row sep=tiny, 
column sep = 1em]
1\arrow[-,r,"1"]&4\arrow[-,r,"1"]&6\arrow[-,rrrdd,"0" ]&7\arrow[-,r,"1"]&8\arrow[-,r,"1"]&10\\
2\arrow[-,r,"1"]&5\arrow[-,rru,"1"]&&&9\arrow[-,r,"1"]&11\\
3\arrow[-,rrrru,"1"]&&&&&12\\
&&&\arrow[-,"\text{stage 3}"]&&&\\
\end{tikzcd}\caption{On the last diagram there are gated lines (not shown) between $(6,7)$ and $(6,9)$.}
\end{figure}
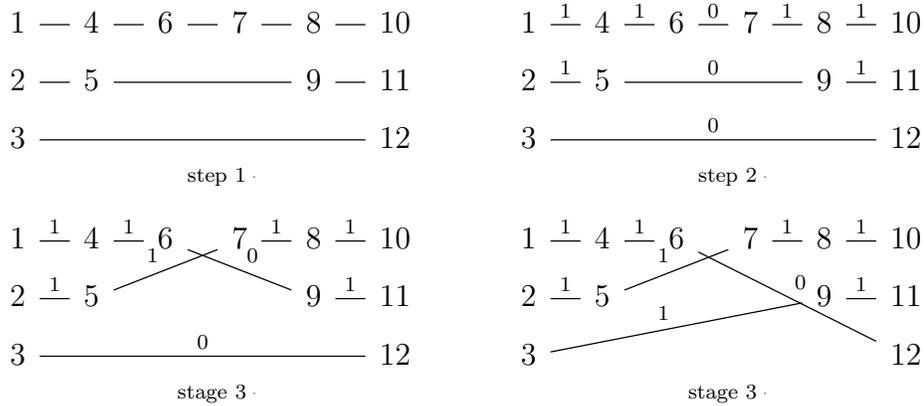

\subsection {}\label {4.6}
We now establish the assertion of the first paragraph of \ref {3.4.8}.  In this we are assuming $\mathfrak g$ is of type $A_{\ell-1}$ and we retain the notation of Section \ref {4}.

Let $\mathfrak p_{\pi_1},\mathfrak p_{\pi_2}$ be parabolics whose Levi factors are conjugate. We may represent them by diagrams $\mathscr D_1,\mathscr D_2$ whose columns have the same length up to permutation. Insert $\{1,2,\ldots,\ell\}$ into their boxes in the manner described in \ref {4.1.4}.  The resulting tableaux $\mathscr T_1,\mathscr T_2$ represent their nilradicals $\mathfrak m_{\pi_1},\mathfrak m_{\pi_2}$.

To prove the required assertion we only have to consider the case where we interchange two adjacent columns $C_2,C_3$ of heights $n \neq m$ and there exists a third column $C_1$, neighbouring to $C_2$ of height $n$.  We can assume that $C_1$ lies to the left of $C_2$.

Then the numbering in $\mathscr T_1$ is given by taking $C_1$ (resp. $C_2,C_3$) to have entries $\{i+1,i+2,\ldots,i+n\}$ (resp. $\{k+1,k+2,\ldots,k+n\},\{k+n+1,\dots, k+n+m\}$) down the rows, with $1\leq i+1, k+n+m \leq \ell$ and $i+n\leq k$.

Now let $w$ be the operation that subtracts $n$ from every entry of $C_2$ and adds $m$ to every entry of $C_2$ leaving all the other entries fixed.  It obviously belongs to the symmetric group $S_\ell$ which we recall is just the Weyl group in this case.  Again it clearly sends $\mathscr T_1$ to $\mathscr T_2$.

%Let $\varpi_1$ (resp. $\varpi_2$) be the weight of generator of the

The ideal of definition of the hypersurface orbital variety $\varpi_1$ (resp. $\varpi_2$) in $\mathfrak m_1$ (resp. $\mathfrak m_2$) defined by the above pair of neighbouring columns of height is the Benlolo-Sanderson minor whose weight is just $\varpi_1=\sum_{j=1}^n \varepsilon_{i+j}-\varepsilon_{k+n+1-j}$ (resp. $\varpi_2=\sum_{j=1}^n \varepsilon_{i+j}-\varepsilon_{k+n+m+1-j}$), in the Bourbaki notation \cite [Tables]{Bo}.

\begin {lemma} One has $w\pi_1=\pi_2,w\varpi_1=\varpi_2$.  In particular $wr_{\mathscr V_1}=r_{\mathscr V_2}$.
\end {lemma}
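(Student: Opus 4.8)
The plan is to prove the two displayed identities by a direct computation with the explicit permutation $w$, and then to deduce the assertion about characteristic functions from the $W$-module realisation of \ref{3.4.4}; the whole argument is elementary bookkeeping in $S_\ell$, the only subtle point being a sign. First I would record the action of $w$ on indices. Since $C_1,C_2,C_3$ carry the entries $\{i+1,\dots,i+n\}$, $\{k+1,\dots,k+n\}$ and $\{k+n+1,\dots,k+n+m\}$ with $i+n\le k$, the block $B:=\{k+1,\dots,k+n+m\}$ is exactly the union of the entries of $C_2$ and $C_3$. On $B$ the permutation $w$ acts by $k+j\mapsto k+m+j$ for $1\le j\le n$ and $k+n+j\mapsto k+j$ for $1\le j\le m$, and it fixes every index outside $B$. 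I would check first that this is a bijection of $B$ (the image sets $\{k+m+1,\dots,k+m+n\}$ and $\{k+1,\dots,k+m\}$ partition $B$), so that $w\in S_\ell=W$; that $w$ carries $\mathscr T_1$ to $\mathscr T_2$ is then clear by construction.

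For $w\pi_1=\pi_2$ recall that $\pi_1$ consists of the simple roots $\varepsilon_c-\varepsilon_{c+1}$ with $c,c+1$ lying in a common column of $\mathscr T_1$. Such a root with $\{c,c+1\}\cap B=\emptyset$ is internal to a column other than $C_2,C_3$; these columns are unchanged in $\mathscr T_2$ and $w$ fixes the root. The internal roots $\varepsilon_{k+j}-\varepsilon_{k+j+1}$ $(1\le j\le n-1)$ of $C_2$ are sent to $\varepsilon_{k+m+j}-\varepsilon_{k+m+j+1}$, which are precisely the internal roots of $C_2$ in its new (right-hand) position in $\mathscr T_2$, and likewise the internal roots of $C_3$ go to the internal roots of $C_3$ in its new (left-hand) position. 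The inter-column roots such as $\varepsilon_{k+n}-\varepsilon_{k+n+1}$ lie in neither $\pi_1$ nor $\pi_2$, so the fact that $w$ does not preserve them is irrelevant. Hence $w$ restricts to a bijection $\pi_1\to\pi_2$. The identity $w\varpi_1=\varpi_2$ is even more immediate: writing $\varpi_1=\sum_{j=1}^n\bigl(\varepsilon_{i+j}-\varepsilon_{k+n+1-j}\bigr)=\sum_{a\in C_1}\varepsilon_a-\sum_{b\in C_2}\varepsilon_b$, the $C_1$-indices are $\le k$ and hence fixed by $w$, whilst $w$ carries $\{k+1,\dots,k+n\}$ onto $\{k+m+1,\dots,k+m+n\}$; this gives $w\varpi_1=\sum_{a\in C_1}\varepsilon_a-\sum_{j=1}^n\varepsilon_{k+m+j}=\varpi_2$.

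Finally, for $wr_{\mathscr V_1}=r_{\mathscr V_2}$ I would pass to the characteristic polynomials. By \ref{3.4.5} the Benlolo--Sanderson hypersurface orbital variety has $p_{\mathscr V_1}=\varpi_1\Pi^1$, so that $r_{\mathscr V_1}=p_{\mathscr V_1}/\Pi=\varpi_1/\Pi^1_-$, and similarly for $\mathscr V_2$. The crucial observation is that $w$ sends the simple system $\pi_1$ into the positive simple system $\pi_2\subset\pi$, and therefore carries each positive root of the Levi $\langle\pi_1\rangle$ to a positive root of $\langle\pi_2\rangle$; consequently $w\Pi^1=\Pi^2$ with no sign. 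Combined with $w\varpi_1=\varpi_2$ this yields $wp_{\mathscr V_1}=p_{\mathscr V_2}$, and the desired relation then follows from the realisation of the $\{r_{\mathscr V}\}$ as the $W$-module of \ref{3.4.4}, in which the action on characteristic functions corresponds to the natural action on the numerators $p_{\mathscr V}$.

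The point to watch is exactly this sign. As a block transposition of $B$, the permutation $w$ acts on $\Pi=\prod_{\alpha\in\Delta^+}\alpha$ by the scalar $(-1)^{nm}$, so a naive evaluation of $w(\varpi_1/\Pi^1_-)$ would produce an unwanted factor $(-1)^{nm}$ coming from $w\Pi^1_-=(-1)^{nm}\Pi^2_-$. What rescues the clean statement is precisely that $w$ maps the positive roots of the first Levi to positive (not merely signed) roots of the second, which is the content of $w\pi_1=\pi_2$ with both sets contained in the positive simple roots; this is the step on which the identity hinges, and once it is in hand the remaining verifications are routine.
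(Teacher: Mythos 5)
Your proof is correct and follows essentially the same route as the paper's: the same explicit block-swap permutation $w$, the same column-by-column verification of $w\pi_1=\pi_2$ and $w\varpi_1=\varpi_2$, and the same deduction of the statement on characteristic functions from these two identities. You are in fact more complete than the paper in two places: you treat the internal roots of $C_3$ as well as those of $C_2$ (the paper's proof only discusses $C_1,C_2$), and you spell out the step which the paper compresses into ``together they imply the third part'' via $w\Pi^1=\Pi^2$ and $wp_{\mathscr V_1}=p_{\mathscr V_2}$.

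One caveat: your final paragraph on signs is misleading. The fact that $w$ carries positive roots of the first Levi to positive roots of the second gives $w\Pi^1=\Pi^2$ and hence the clean identity $wp_{\mathscr V_1}=p_{\mathscr V_2}$ on characteristic \emph{polynomials}; but it does not cancel the factor $(-1)^{nm}$ in $w\Pi^1_-=(-1)^{nm}\Pi^2_-$, which you yourself compute. Under the genuine action of $w$ on rational functions on $\mathfrak h$ one has $wr_{\mathscr V_1}=(wp_{\mathscr V_1})/(w\Pi)=(-1)^{nm}\,r_{\mathscr V_2}$, sign and all, so nothing ``rescues'' that evaluation. What makes the unsigned statement of the lemma correct is precisely the convention you invoke in your third paragraph (and which the paper uses implicitly, cf.\ the action on the $p_{\mathscr V}$ in \ref{3.4.7} and the expression $w\varpi_j/\Pi^2_-$ in \ref{3.4.9}): the $W$-action on characteristic functions is the one transported from the action on the characteristic polynomials, i.e.\ one compares $(w\varpi_1)/\Pi^2_-$ with $r_{\mathscr V_2}$. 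With that reading your third paragraph already completes the proof, and the ``rescue'' claimed in the fourth paragraph is both unnecessary and, as stated, incorrect.
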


\begin {proof} It is clear that only the elements of $\pi$ which come from $C_1,C_2$ are changed by $w$. Those coming from $C_1$ are unchanged because the entries in $C_1$ are unchanged.  Those coming from $C_2$ form the set $\{\varepsilon_{k+j}-\varepsilon_{k+j+1}\}_{j=1}^{n-1}$, which under $w$ becomes the set $\{\varepsilon_{k+j+m}-\varepsilon_{k+j+1+m}\}_{j=1}^{n-1}$.  These in turn are the elements of $\pi_2$ coming from $C_2$ in $\mathscr T_2$.  This proves the first part.  For the second part we note that $\varepsilon_{i+j}$ remains unchanged whilst $\varepsilon_{k+n+1-j}$ becomes $\varepsilon_{k+n+m+1-j}$, for all $j\in [1,n-1]$.  This proves the second part. Together they imply the third part.
\end {proof}

%
% \subsubsection {}\label{4.3.6}
%
% \textbf{Example 1.}  Take $\mathscr T$ to be given by the array $1,3,3,2$.  Since $C_3$ has right going line labelled by $1$, we are forced to take $C=C_2$.  The $k'=3$.  It is only non-vanishing the weights defined by the lines $\ell_{3,6}$ and $\ell_{4,7}$.  The latter carries a zero.  Applying $(*)$ results in $\mathscr T_-$ being given by the array $1,2,3,2$.  In this $\ell_{3,6}$ is replaced by $\ell_{3,5}$.   It needs to be eliminated.  This is possible there is a pair of neighbouring columns of height $3$ as long as we take a leftmost labelling by $0$ in $\mathscr T_-$.  By contrast for the array $(2,3,3,1$ we must take a rightmost labelling.
%
%  \textbf{Example 2.}  Take $\mathscr T$ to be given by the array $3,3,1$.  It has a gap. Take $C=C_2$.  It has a left going line $\ell_{2,5}$ from $b_{2,2}$ which has entry $5$. Applying $(*)$ results in $\mathscr T_-$ to be given by the array $3,2,1$.  It has no gaps and os the rank of the matrix it defines is $3$.  Then we can simply eliminate the line corresponding to $\ell_{2,5}$.  This drops the rank by $1$.  Thus we conclude that the matrix defined by $\mathscr T$ has rank $3$ as required.
%
%  \
%
%  \textbf{Remark.}  Inspection of our reduction shows that there is always an $\ell \times \ell$ minor in the matrix defined by $\mathscr T$ which is triangular up to permutations of rows and columns whose diagonal entries are $\pm 1$.

\section{The Number of Invariant Generators}\label{5}

 \subsection {}\label{5.1}

Fix a diagram $\mathscr D$ and let $g$ be the number of its pairs of neighbouring columns.

Define the algebra morphism $\varphi$ as in \ref {4.2.5} using the assignment of lines and labels on the lines given by Step 2.

The Benlolo-Sanderson construction gives $g$ invariant polynomials in $\mathbb C[\mathfrak m]^{\mathfrak p'}$.  By Lemma \ref {4.2.5}(ii) these polynomials must be algebraically independent since their images under $\varphi$ have this property.

Since $\mathbb C[\mathfrak m]^{\mathfrak p'}$ is a domain whose image has GK dimension $g$, it follows that $\varphi$ can admit a non-zero kernel only if the GK dimension of $\mathbb C[\mathfrak m]^{\mathfrak p'}$ is strictly greater than $g$, which translates in the present situation to saying that $\mathbb C[\mathfrak m]^{\mathfrak p'}$ is a polynomial algebra on $>g$ generators.

By \cite {JM} the ideal of definition of a hypersurface orbital variety in $\mathfrak m$ is the zero locus of an irreducible $P$ semi-invariant polynomial in $C[\mathfrak m]$.  Moreover the converse holds and so the number of such polynomials is just the number of hypersurface orbital varieties in $\mathfrak m$.  By \cite {JM}, this number is exactly $g$.

On the other hand $\mathbb C[\mathfrak m]^{\mathfrak p'}$ is generated by irreducible polynomials, which are weight vectors, as a polynomial algebra.  We conclude that $\mathbb C[\mathfrak m]^{\mathfrak p'}$ has GK dimension $g$ and so $\varphi$ is injective.

Here we wish to give a second proof of injectivity not using the counting of hypersurface orbital varieties.  For this it is enough to show the

\begin {prop}  $P'(e+V)$ is dense in $\mathfrak m$.
\end {prop}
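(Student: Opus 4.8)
The plan is to produce a single point of $e+V$ at which the differential of the action morphism $a\colon P'\times(e+V)\to\mathfrak m$, $(g,x)\mapsto g\cdot x$, is surjective. Since $P'$ is connected and $e+V$ is an affine space, the source is smooth and irreducible, as is $\mathfrak m$; so a surjective differential at one point makes $a$ submersive there, hence dominant, which is exactly the density asserted. The natural test point is the element $\hat e$ of \ref{4.3} obtained by placing $1$ on every horizontal line. Writing $e=\sum_{k\in K}x_k$ for the sum over the $1$-lines and $V=\bigoplus_{z}\mathbb C x_z$ for the span of the $0$-lines of Step $2$ (recall that throughout \ref{5.1} one works with the Step $2$ assignment of \ref{4.2.5}), one has $\hat e=e+\sum_z x_z\in e+V$. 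The image of $da$ at $(\Id,\hat e)$ is $[\mathfrak p',\hat e]+V$, so it suffices to prove
\begin{equation*}
[\mathfrak p',\hat e]+V=\mathfrak m .
\end{equation*}

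First I would invoke the theorem of Br\"ustle--Hille--Ringel--R\"ohrle \cite{BHRR}, by which $P\hat e$ is dense in the irreducible variety $\mathfrak m$, hence open, so its tangent space at $\hat e$ fills $\mathfrak m$, that is $[\mathfrak p,\hat e]=\mathfrak m$. Writing $\mathfrak h=\mathfrak h'\oplus\mathfrak z$ with $\mathfrak z$ the centre of $\mathfrak r$ (which lies in $\mathfrak h$) and recalling $\mathfrak p'=[\mathfrak r,\mathfrak r]\oplus\mathfrak m$ and $\mathfrak h'=\mathfrak h\cap[\mathfrak r,\mathfrak r]$, one has $\mathfrak p=\mathfrak p'+\mathfrak h$ and $\mathfrak h'\subseteq\mathfrak p'$. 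Therefore
\begin{equation*}
\mathfrak m=[\mathfrak p,\hat e]=[\mathfrak p',\hat e]+[\mathfrak h,\hat e],
\end{equation*}
and since $[\mathfrak h',\hat e]\subseteq[\mathfrak p',\hat e]$ the whole matter reduces to absorbing the remaining toral directions, i.e. to showing $[\mathfrak h,\hat e]\subseteq[\mathfrak p',\hat e]+V$.

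The key point — the only place where genuine input is required — is the separation Proposition of \ref{4.3.5}. For $\eta\in\mathfrak h$ one has $[\eta,\hat e]=\sum_{k\in K}\varpi_k(\eta)x_k+\sum_{z}\varpi_z(\eta)x_z$, whose second summand lies in $V$; thus modulo $V$ the bracket is $\sum_{k\in K}\varpi_k(\eta)x_k$. Now \ref{4.3.5} asserts that the functionals $\{\varpi_k\}_{k\in K}$ are linearly independent on $\mathfrak h'$, which is precisely the statement that the evaluation map $\mathfrak h'\to\mathbb C^{K}$, $\eta'\mapsto(\varpi_k(\eta'))_{k\in K}$, is surjective. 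Hence, given $\eta\in\mathfrak h$, I can choose $\eta'\in\mathfrak h'$ with $\varpi_k(\eta')=\varpi_k(\eta)$ for every $k\in K$; then $[\eta,\hat e]-[\eta',\hat e]=\sum_{z}\bigl(\varpi_z(\eta)-\varpi_z(\eta')\bigr)x_z\in V$, so $[\eta,\hat e]\in[\mathfrak h',\hat e]+V\subseteq[\mathfrak p',\hat e]+V$. This yields $[\mathfrak h,\hat e]\subseteq[\mathfrak p',\hat e]+V$, whence $\mathfrak m=[\mathfrak p',\hat e]+V$ and the proposition follows.

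The main obstacle is entirely concentrated in this last step: on passing from $P$ to its derived group $P'$ one loses exactly the scaling directions coming from the centre $\mathfrak z$ of the Levi, and a priori $[\mathfrak z,\hat e]$ need not sit inside $[\mathfrak p',\hat e]+V$. These directions are recovered only because the weights of the $1$-lines are independent on $\mathfrak h'$, so that $\mathfrak h'$ already reaches every $K$-coordinate of $\sum_{k}\varpi_k(\eta)x_k$ while the discrepancy on the $0$-lines is swept into $V$. Everything else is formal: the inclusion $\hat e\in e+V$ together with the submersion criterion for dominance. I would note finally that this argument uses only the Step $2$ data, the Step $3$ modification of \ref{4.4} playing no role in the density statement.
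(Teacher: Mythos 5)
Your proof is correct, and it shares the paper's essential inputs: the same test point $\hat e$ (all horizontal lines set to $1$, so $\hat e\in e+V$), the theorem of Br\"ustle--Hille--Ringel--R\"ohrle \cite{BHRR} giving $[\mathfrak p,\hat e]=\mathfrak m$, and the separation Proposition \ref{4.3.5} to bridge the gap between $\mathfrak p$ and $\mathfrak p'$. Where you diverge is in how that bridge is built. The paper (\ref{5.2.1}--\ref{5.2.3}) first shows that the roots attached to \emph{all} horizontal lines are linearly independent, constructs from this a semisimple element $h\in\mathfrak h$ whose $-1$ eigenspace contains all the corresponding root vectors, and then uses the eigenspace decompositions $\mathfrak p=\mathfrak p_{\neq 0}+\mathfrak h$, $\mathfrak p'=\mathfrak p'_{\neq 0}+\mathfrak h'$ together with $\mathfrak p_{\neq 0}=\mathfrak p'_{\neq 0}$ to convert $\mathfrak p.\hat e=\mathfrak m$ into $\mathfrak p'.\hat e+V=\mathfrak m$. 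You replace that machinery by the elementary decomposition $\mathfrak p=\mathfrak p'+\mathfrak h$ plus the remark that separation is exactly surjectivity of $\eta'\mapsto(\varpi_k(\eta'))_{k\in K}$ from $\mathfrak h'$ onto $\mathbb C^K$, which lets $[\mathfrak h,\hat e]$ be absorbed into $[\mathfrak h',\hat e]+V$. Your route is leaner: Lemmas \ref{5.2.1}, \ref{5.2.2} and Corollary \ref{5.2.3} are not needed for the density statement. What the paper's longer route buys is the element $h$ itself, acting by $-1$ on every horizontal root vector (hence $h.\hat e=-\hat e$) and by $0$ on $\mathfrak h^\perp$ --- precisely the kind of grading element demanded by the conjectural picture of \ref{3.6.7} and the discussion in \ref{5.4}; your argument produces no such $h$. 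Finally, your closing step (surjective differential of $P'\times(e+V)\to\mathfrak m$ at one point implies dominance) is the rigorous form of what \ref{5.2.5} sketches: testing against linear functionals $\xi\in\mathfrak m^*$, as the paper does, shows in effect only that no hyperplane contains $P'(e+V)$, and full Zariski density rests on the standard argument cited there from Dixmier, which is exactly the submersion argument you wrote out.
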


\textbf{N.B.}  Notice that the pair $e+V$ is given by Step $2$.

 \subsection {Proof of the Proposition}\label{5.2}

 \subsubsection {}\label{5.2.1}

 Let $\mathscr H$ denote the set of all horizontal lines in $\mathscr T$.  For all $i \in \mathbb N^+$, let $m_i$ denote the number of boxes in the $i^{th}$ row of $\mathscr T$.

  Given blocks $b_1,b_2$ on the same row with $b_1$ to the left of $b_2$, let $\ell_{b_1,b_2}$ be the horizontal line joining them and $x_{b_1,b_2}$ the corresponding element in $\mathfrak m$ and $\alpha_{b_1,b_2}:=\varpi _{b,b'}$ its weight, which is in fact a root.

  Let $(. \ , \ .)$ denote the Cartan inner product on $\mathfrak h^*$.

 \begin {lemma}  The roots defined by the elements of $\mathscr H$ are linearly independent and form a system of type $\prod_{i \in \mathbb N^+}A_{m_i-1}$.
 \end {lemma}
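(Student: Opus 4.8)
The plan is to compute the root attached to each horizontal line explicitly in the $\varepsilon$-basis and then read off both assertions from an elementary incidence-vector and inner-product analysis. First I would recall from \ref{4.1.4} that the entry of the box $b_{u,v}$ is $u+n^v$, so that along a fixed row $R_i$ the entries form a set $S_i$ of $m_i$ distinct integers strictly increasing from left to right; moreover, since each of $1,\ldots,n$ occupies a unique box, the sets $S_i$ are pairwise disjoint and partition $[1,n]$. By \ref{4.2.4} the horizontal line joining two adjacent boxes of $R_i$ with entries $a<b$ carries the root $\alpha_a+\alpha_{a+1}+\cdots+\alpha_{b-1}=\varepsilon_a-\varepsilon_b$. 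Writing $S_i=\{a^{(i)}_1<a^{(i)}_2<\cdots<a^{(i)}_{m_i}\}$, the adjacent boxes of $R_i$ are exactly the consecutive pairs, so $\mathscr H$ contributes in row $R_i$ precisely the $m_i-1$ roots $\beta^{(i)}_k=\varepsilon_{a^{(i)}_k}-\varepsilon_{a^{(i)}_{k+1}}$ for $k=1,\ldots,m_i-1$.

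For the linear independence I would interpret each root $\varepsilon_a-\varepsilon_b$ as the edge vector of the graph $\Gamma$ on the vertex set $[1,n]$ whose edges join $a^{(i)}_k$ to $a^{(i)}_{k+1}$. By the previous paragraph $\Gamma$ is a disjoint union of paths, one path $a^{(i)}_1-a^{(i)}_2-\cdots-a^{(i)}_{m_i}$ for each row $R_i$, and these paths sit on the pairwise disjoint vertex sets $S_i$. Hence $\Gamma$ is a forest, and the standard fact that the vectors $\varepsilon_a-\varepsilon_b$ indexed by the edges of a forest are linearly independent yields the first assertion. Equivalently, the roots of row $R_i$ lie in the coordinate subspace spanned by $\{\varepsilon_p:p\in S_i\}$, and these subspaces are in direct sum.

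To identify the type I would compute Cartan inner products using $(\varepsilon_p,\varepsilon_q)=\delta_{p,q}$. Two roots from the same row are either consecutive, $\beta^{(i)}_k$ and $\beta^{(i)}_{k+1}$, which share exactly the index $a^{(i)}_{k+1}$ and so have inner product $-1$, or non-consecutive, in which case their index sets are disjoint and the inner product is $0$; thus the $m_i-1$ roots of $R_i$ form the chain that is the Dynkin diagram of $A_{m_i-1}$. Two roots coming from different rows have disjoint index sets and are therefore orthogonal. Consequently the Dynkin diagram of the whole collection is the disjoint union of chains of types $A_{m_1-1},A_{m_2-1},\ldots$, which is exactly the assertion that the roots defined by $\mathscr H$ form a system of type $\prod_{i\in\mathbb N^+}A_{m_i-1}$.

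The only substantive point is the disjointness of the entry sets $S_i$ across rows, which simultaneously makes $\Gamma$ a forest and forces the inter-row orthogonality; once this is noted, both statements reduce to the standard incidence-vector and Dynkin-diagram computations above, so I do not expect a genuine obstacle. As a consistency check, the number of horizontal lines is $\sum_i(m_i-1)=n-s$, where $s=ht\,\mathscr D$ is the number of rows, which matches both the rank of $\prod_i A_{m_i-1}$ and the count in Lemma \ref{4.2.1}.
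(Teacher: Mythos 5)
Your proposal is correct and takes essentially the same route as the paper: both arguments rest on the fact that distinct boxes carry distinct entries, so lines on different rows give roots with disjoint index sets (hence orthogonal), while consecutive lines on a row share exactly one entry giving inner product $-1$, which yields the chains $A_{m_i-1}$ and their orthogonal product. The only cosmetic difference is that you package linear independence as the forest/incidence-vector fact in the $\varepsilon$-basis, whereas the paper observes that within a row the roots have pairwise disjoint supports as sums of simple roots and combines this with the cross-row orthogonality — the same elementary observation in different clothing.
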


 \begin {proof}  The entries of the boxes are pairwise distinct, so for horizontal lines on distinct rows $x_{b_1,b_2}$ commutes with $x_{b_3,b_4}$ and its transpose. Hence  $(\alpha_{b_1,b_2},\alpha_{b_3,b_4})=0$.  On the other hand the weights of the horizontal lines joining adjacent boxes on the same line have distinct supports, so are linearly independent.  The proves the first part of the lemma.

 For horizontal lines joining adjacent boxes on a given row, a scalar product is only non-zero when a box is shared and then it equals $-1$.   Consequently the composite lines on $R_i$ which pass through $m_i$ boxes, form a system of type $A_{m_i-1}$.  Hence the second part.
 \end {proof}

 \subsubsection {}\label{5.2.2}

  Let $\mathscr H_1$ (resp. $\mathscr H_0$) denote the subset of $\mathscr H$ of lines carrying $1$ (resp. $0$).

 Let $\Delta$ be the set of all roots in $\mathfrak g$.   Let $\Pi$ be the set of roots defined by $\mathscr H$.  Let $\mathfrak h^\perp$ be the orthogonal of $\Pi$ in $\mathfrak h$.

 It follows from Lemma \ref {5.2.1} that $\mathbb Z \Pi \cap \Delta$ is a root subsystem of $\Delta$.  Moreover we may choose a complement $\mathfrak h_\Pi$ of $\mathfrak h^\perp$ as a Cartan subalgebra of the corresponding semisimple Lie subalgebra $\mathfrak s$ of $\mathfrak {sl}(n)$.

 Set $\Delta_\Pi:=\Delta \cap \mathbb Z \Pi$.

 \begin {lemma} There exists $h \in \mathfrak h$ such that the $-1$ eigensubspace of $h$ on $\mathfrak {sl}(n)$ is that generated by the $x_\alpha : \alpha \in \Pi$ and the $0$ eigenspace is $\mathfrak h^\perp$.
 \end {lemma}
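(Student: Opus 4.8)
The plan is to produce $h$ by solving a finite linear system and then perturbing it generically. By Lemma \ref{5.2.1} the roots in $\Pi$ are linearly independent in $\mathfrak h^*$, so the affine system $\alpha(h)=-1$ $(\alpha\in\Pi)$ is consistent and its solution set $A$ is a coset of the common kernel $\Pi^\perp=\mathfrak h^\perp$. For every $h\in A$ and every $\alpha\in\Pi$ one has $[h,x_\alpha]=\alpha(h)x_\alpha=-x_\alpha$, so the subspace generated by $\{x_\alpha\}_{\alpha\in\Pi}$ already lies in the $(-1)$-eigenspace of $\ad h$. The whole point, therefore, is to choose $h\in A$ so that this inclusion becomes an equality and so that no root space survives in the $0$-eigenspace.

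Since $\ad h$ vanishes on $\mathfrak h$ and acts by the scalar $\beta(h)$ on each $x_\beta$, the $(-1)$-eigenspace is $\bigoplus_{\beta\in\Delta,\ \beta(h)=-1}\mathbb C x_\beta$, and I must show the set of such $\beta$ is exactly $\Pi$. I would split $\Delta$ according to membership in $\Delta_\Pi=\Delta\cap\mathbb Z\Pi$. For $\beta\in\Delta_\Pi$ the essential input is Lemma \ref{5.2.1}: $\Pi$ is a set of simple roots for a system of type $\prod_i A_{m_i-1}$, so $\beta$ is a $\mathbb Z$-combination of $\Pi$ with coefficients of a single sign, whence $\beta(h)$ equals minus the sum of those coefficients. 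In type $A$ the only roots of height one are the simple ones, so within $\Delta_\Pi$ one has $\beta(h)=-1$ precisely when $\beta\in\Pi$, and $\beta(h)\neq 0$ for every nonzero $\beta\in\Delta_\Pi$; crucially this part holds for \emph{all} $h\in A$, with no freedom available. For $\beta\notin\Delta_\Pi$ the root $\beta$ does not lie in $\mathbb R\Pi=(\mathfrak h^\perp)^\perp$, so $\beta$ restricts non-trivially to the direction space $\mathfrak h^\perp$ of $A$; consequently each of the conditions $\beta(h)=-1$ and $\beta(h)=0$ cuts out a proper affine subspace of $A$.

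Finitely many proper affine subspaces cannot cover $A$ over the infinite field $\mathbb C$, so I would pick $h\in A$ avoiding all of them. For this $h$ the $(-1)$-eigenspace receives no contribution from outside $\Delta_\Pi$ and, by the preceding paragraph, exactly the contributions $x_\alpha$ $(\alpha\in\Pi)$ from inside $\Delta_\Pi$; this gives the first assertion. Likewise no root is annihilated by $h$, so the $0$-eigenspace of $\ad h$ reduces to the Cartan $\mathfrak h$, inside which $\mathfrak h^\perp$ is singled out as the annihilator of $\Pi$ — equivalently the centraliser of the subalgebra $\mathfrak s$ generated by the $(-1)$-eigenvectors — which is the description recorded in the second assertion.

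The main obstacle is the exactness established in the second paragraph, namely ruling out spurious roots $\beta$ with $\beta(h)=-1$. It is precisely here that both special features of the situation enter: the type-$A$ fact that height-one roots are simple disposes of $\Delta_\Pi$, where no genericity is available, while the linear independence of $\Pi$ (hence the positive dimension of $A$ along $\mathfrak h^\perp$) supplies exactly the room needed to dodge the finitely many roots lying outside $\Delta_\Pi$. Everything else is the routine observation that eigenspaces of $\ad h$ on $\mathfrak{sl}(n)$ are read off from the values $\beta(h)$.
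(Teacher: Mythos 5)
Your proof is correct and takes essentially the same route as the paper: the paper picks $h_1$ with $\alpha(h_1)=-1$ for all $\alpha\in\Pi$ (a point of your coset $A$, existing by the linear independence of Lemma \ref{5.2.1}) and then adds a generic $h_2\in\mathfrak h^\perp$ so that every root outside $\Delta_\Pi$ takes a non-integral value, while the type $\prod_i A_{m_i-1}$ structure forces the values on $\Delta_\Pi$ to be non-zero integers equal to $-1$ exactly on $\Pi$, just as in your second paragraph. Two marginal remarks: both you and the paper tacitly use the type-$A$ fact that $\Delta\cap\mathbb R\Pi=\Delta\cap\mathbb Z\Pi$ (your unproved assertion that $\beta\notin\Delta_\Pi$ forces $\beta\notin\mathbb R\Pi$), and your explicit handling of the $0$-eigenspace --- no root vector is annihilated, so only the Cartan survives --- supplies a step the paper passes over with ``hence the lemma''.
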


 \begin {proof} By Lemma \ref {5.2.1} there exists $h_1 \in \mathfrak h_\Pi$ which takes the value $-1$ on the elements of $\Pi$.   Then the eigenvalues of $h$ on $\Delta_\Pi$ are non-zero integers.

 Choose $h_2 \in \mathfrak h^\perp$ in general position and set $h=h_1+h_2$.   Then $h$ takes integer values on a root in $\mathfrak {sl}(n)$ if and only if it belongs $\Delta_\Pi$. Hence the lemma.
 \end {proof}

 \subsubsection {}\label{5.2.3}

 Let $h$ be as in the conclusion of Lemma \ref {5.2.2}. Given $V$ an semisimple $h$ module. let $V_0$ denote its zero $h$ subspace and $V_{\neq 0}$ its $h$ stable complement.

 \begin {cor}

 \

 (i). $\mathfrak p'=\mathfrak p'_{\neq 0} + \mathfrak h'$,

  \

  (ii). $\mathfrak p=\mathfrak p_{\neq 0} + \mathfrak h$,

  \

  (iii). $\mathfrak p'_{\neq 0} = \mathfrak p_{\neq 0}$.

  \end {cor}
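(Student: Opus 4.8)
The plan is to deduce all three identities from a single fact contained in the proof of Lemma \ref{5.2.2}: the element $h$ takes a \emph{nonzero} value on every root of $\mathfrak{sl}(n)$. Indeed, the eigenvalues of $\ad h$ on the root vectors indexed by $\Delta_\Pi$ are nonzero integers, while on the remaining roots they are non-integral and hence again nonzero. Since $\ad h$ is semisimple, $\mathfrak h$ lies in its kernel and no root space does, so the zero eigenspace of $\ad h$ on $\mathfrak{sl}(n)$ is exactly $\mathfrak h$. Granting this regularity statement, parts (i)--(iii) are purely formal consequences of the root space decomposition together with the splitting $\mathfrak p = \mathfrak h + \mathfrak p'$.

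For (ii) I would intersect the global eigenspace decomposition with $\mathfrak p$. Because $\mathfrak p$ is $\ad h$-stable and $\mathfrak h \subset \mathfrak p$, the zero eigenspace of $\mathfrak p$ is $\mathfrak p_0 = \mathfrak p \cap \mathfrak h = \mathfrak h$, whence $\mathfrak p = \mathfrak p_0 \oplus \mathfrak p_{\neq 0} = \mathfrak h + \mathfrak p_{\neq 0}$. For (i) the same argument applies to $\mathfrak p' = [\mathfrak r,\mathfrak r] \oplus \mathfrak m$, giving $\mathfrak p'_0 = \mathfrak p' \cap \mathfrak h$; since $\mathfrak h \cap \mathfrak m = 0$ and, by the definition of $\mathfrak h'$ in \ref{4.3.1}, $\mathfrak h \cap [\mathfrak r,\mathfrak r] = \mathfrak h'$, one obtains $\mathfrak p'_0 = \mathfrak h'$ and therefore $\mathfrak p' = \mathfrak h' + \mathfrak p'_{\neq 0}$.

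Finally, for (iii): the inclusion $\mathfrak p' \subseteq \mathfrak p$ gives immediately $\mathfrak p'_{\neq 0} \subseteq \mathfrak p_{\neq 0}$. Conversely, by the computation of $\mathfrak p_0$ above, $\mathfrak p_{\neq 0}$ is spanned by the root vectors $x_\alpha$ with $\alpha$ a root of $\mathfrak p$; every such $x_\alpha$ already lies in $\mathfrak p'$, since the root spaces of the Levi $\mathfrak r$ lie in $[\mathfrak r,\mathfrak r]$ (the centre of $\mathfrak r$ being contained in $\mathfrak h$) and those of the nilradical lie in $\mathfrak m$. Being of nonzero weight and lying in $\mathfrak p'$, these vectors lie in $\mathfrak p'_{\neq 0}$, which yields the reverse inclusion and hence (iii). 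Equivalently, one may close the argument by the dimension count $\dim \mathfrak p - \dim \mathfrak p' = \dim \mathfrak h - \dim \mathfrak h'$, which together with (i) and (ii) forces $\dim \mathfrak p_{\neq 0} = \dim \mathfrak p'_{\neq 0}$.

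The only step demanding genuine care, and the sole place where Lemma \ref{5.2.2} is really invoked, is the first paragraph: one must be certain that $\ad h$ kills no root vector, so that $\mathfrak p_0$ and $\mathfrak p'_0$ collapse exactly onto $\mathfrak h$ and $\mathfrak h'$. Once this regularity of $h$ on the roots is in hand, there is no further obstacle, the remaining content being bookkeeping with the decomposition $\mathfrak p = \mathfrak h \oplus \bigoplus_{\alpha} \mathfrak g_\alpha$ and the splitting $\mathfrak p = \mathfrak h + \mathfrak p'$.
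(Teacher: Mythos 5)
Your proposal is correct and takes essentially the same route the paper intends: the corollary is stated there without proof, as an immediate consequence of Lemma \ref{5.2.2}, and your argument (regularity of $h$ on every root, then intersecting the $\ad h$-eigenspace decomposition with $\mathfrak p$ and with $\mathfrak p'=[\mathfrak r,\mathfrak r]\oplus\mathfrak m$) is exactly the bookkeeping left to the reader. You were also right to rely on what the \emph{proof} of Lemma \ref{5.2.2} establishes (no root of $\mathfrak{sl}(n)$ vanishes on $h$, hence the zero eigenspace of $\ad h$ is all of $\mathfrak h$) rather than on its literal statement that this eigenspace is $\mathfrak h^\perp$; since $h\in\mathfrak h$ centralizes $\mathfrak h\supsetneq\mathfrak h^\perp$, that statement must be read as a misprint for $\mathfrak h$, and your first paragraph supplies precisely the needed correction.
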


  \subsubsection {}\label{5.2.4}

 Let $E$ (resp. $V$) be the linear span of the root vectors corresponding to $\mathscr H_1$ (resp. $\mathscr H_0$).

 Choose $e \in E, v \in V$ in general position (say with coefficient everywhere being $1$.  Set $\hat{e}=e+v$.  By Lemma \ref {5.2.1}, the action of $\mathfrak h$ on $\hat{e}$ generates $E+V$, whilst by Proposition \ref {4.3.5}, the action of $\mathfrak h_\Pi$ on $e$ generates $E$.

 \begin {lemma}  $\mathfrak p'.(e+v)+V=\mathfrak m$.
 \end {lemma}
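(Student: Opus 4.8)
The plan is to deduce the identity from the density of $P\hat e$ already available to us, rather than re-examining the orbit directly. Recall from \ref{4.3} that $\hat e=e+v$ is exactly the element carrying a $1$ on \emph{every} horizontal line, so it is the element for which Ringel et al \cite{BHRR} prove that $P\hat e$ is dense in $\mathfrak m$. Passing to tangent spaces at $\hat e$, density gives $\mathfrak p.\hat e=\mathfrak m$, the inclusion $\mathfrak p.\hat e\subseteq\mathfrak m$ being automatic. Thus I would take $\mathfrak p.\hat e=\mathfrak m$ as the starting input and descend from $\mathfrak p$ to $\mathfrak p'$.

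The descent is carried out using the grading of Lemma \ref{5.2.2}. By parts (ii) and (iii) of Corollary \ref{5.2.3} we have $\mathfrak p=\mathfrak p_{\neq 0}+\mathfrak h=\mathfrak p'_{\neq 0}+\mathfrak h$ with $\mathfrak p'_{\neq 0}\subseteq\mathfrak p'$, so
\[ \mathfrak m=\mathfrak p.\hat e=\mathfrak p'_{\neq 0}.\hat e+\mathfrak h.\hat e\subseteq \mathfrak p'.\hat e+\mathfrak h.\hat e. \]
By Lemma \ref{5.2.1} the roots attached to the horizontal lines are linearly independent, so the action of $\mathfrak h$ on $\hat e$ generates $E+V$; that is $\mathfrak h.\hat e=E+V$, and hence $\mathfrak m=\mathfrak p'.\hat e+E+V$.

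It remains to absorb $E$ into $\mathfrak p'.\hat e+V$. Since $V$ is $\mathfrak h$-stable we have $\mathfrak h'.v\subseteq V$, while Proposition \ref{4.3.5} asserts that the weights of the lines carrying a $1$ restrict to linearly independent functionals on $\mathfrak h'$; as $e$ has nonzero coefficient on each such root vector, this is precisely the statement $\mathfrak h'.e=E$. Because $\mathfrak h'\subseteq\mathfrak p'$, reducing modulo $V$ gives
\[ E+V=\mathfrak h'.e+V=\mathfrak h'.\hat e+V\subseteq \mathfrak p'.(e+v)+V. \]
Substituting into the previous line yields $\mathfrak m\subseteq\mathfrak p'.(e+v)+V$, and the reverse inclusion is clear, which is the asserted equality.

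Once the two generation facts are in hand the argument is essentially bookkeeping; the genuine inputs are the density of $P\hat e$ from \cite{BHRR} and the separation of the label-$1$ weights by $\mathfrak h'$ in Proposition \ref{4.3.5}, the latter being where the combinatorics of Step $2$ and the inductive argument of \ref{4.3.5} actually enter. The one point that I expect to require care is the passage from $\mathfrak p$ to $\mathfrak p'$: one must verify that replacing the full Cartan $\mathfrak h$ by $\mathfrak h'$ loses only directions inside $E+V$, and that the lost $E$-component is recovered through $\mathfrak h'.e$ rather than through some $h\in\mathfrak h\setminus\mathfrak h'$. This is exactly what Corollary \ref{5.2.3} (for the $V$-part) and Proposition \ref{4.3.5} (for the $E$-part) are designed to guarantee.
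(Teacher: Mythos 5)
Your proposal is correct and follows essentially the same route as the paper: both arguments start from $\mathfrak p.\hat e=\mathfrak m$ (Ringel et al \cite{BHRR}), decompose $\mathfrak p$ via Corollary \ref{5.2.3}, use Lemma \ref{5.2.1} to get $\mathfrak h.\hat e=E+V$, and use Proposition \ref{4.3.5} to recover $E$ from $\mathfrak h'.e$ modulo $V$. The paper compresses this into a single chain of equalities $\mathfrak p.(e+v)=\mathfrak p'_{\neq 0}.(e+v)+E+V=\mathfrak p'.(e+v)+V$, which is exactly the bookkeeping you carried out in expanded form.
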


 \begin {proof}

 Through the corollary and the above observation we obtain $\mathfrak p.(e+v)= \mathfrak p_{\neq 0}.(e+v)+ \mathfrak h.(e+v)=\mathfrak p'_{\neq 0}.(e+v)+ E + V =\mathfrak p'.(e+v)+V$.  Yet after Ringel et al \cite {BHRR} the left hand side equals $\mathfrak m$.

 \end {proof}

\subsubsection {}\label{5.2.5}

That the conclusion of lemma \ref {5.2.4} implies the conclusion of the proposition is fairly standard =  see for example \cite [bottom of p. 262 to top of p. 263] {D};  but we shall write out the details for completeness.  Let $U$ be the subspace $\mathfrak p'.(e+v)$.  Then $P'.(e+v)$ is dense in $U$ since $\mathfrak p'$ is the Lie algebra of $P'$.

Take  $\xi \in \mathfrak m^*$, such that \newline $(e+v +v')(p'\xi)=0$ for all $v' \in V$ and $p' \in P'$.  This must hold in particular for $v'=0$. Thus $(e+v)(p'\xi)=0$ for all $p' \in P'$ and $v'(\xi)=0$, for all $v' \in V$. By the first (resp. second) part  $\xi$ vanishes on $U$ (resp. $V$). Hence $\xi=0$ proving the proposition.

% \subsection {Irreducibility of the Minors}\label{5.3}
%
% A delicate point in \cite {BS} was to show that the Benlolo-Sanderson invariants are irreducible.  Indeed this can fail if we take the corresponding minor between two columns $C,C'$ of equal height which are not neighbouring.  In fact the resulting invariant is just the product of the invariants defined by the successive neighbouring columns between $C,C'$.
%
% In \cite {JM}, three proofs of irreducibility were given.  Two relied on the irreducibility of the associated variety of a simple highest weight module with integral highest weight in type $A$.  Recently G. Williamson \cite {W} has shown that this can fail. The third proof was purely combinatorial but arduous.
%

\subsubsection {}\label{5.2.6}

Now independent of \cite {JM} we obtain

\begin {cor} $S(\mathfrak m^*)^{\mathfrak p'}$ is a polynomial algebra with the number of generators being the number of hypersurface orbital varieties in $\mathfrak m$.
\end {cor}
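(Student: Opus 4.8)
The plan is to read off the corollary from the density statement just established (the Proposition of \ref{5.1}), combined with the algebraic independence recorded in Lemma \ref{4.2.5}(ii) and the polynomiality of $\mathbb C[\mathfrak m]^{\mathfrak p'}$ already obtained in \ref{2.2.3}. Write $g$ for the number of pairs of neighbouring columns of $\mathscr D$; by \ref{4.2.2} this is exactly $\dim V$, and the Benlolo--Sanderson construction attaches one invariant in $\mathbb C[\mathfrak m]^{\mathfrak p'}$ to each such pair, so we have $g$ distinguished invariants to work with.

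First I would argue that the restriction map $\varphi$ is injective. Since $\mathfrak p'$ is the Lie algebra of the connected group $P'$, every element of $\mathbb C[\mathfrak m]^{\mathfrak p'}$ is constant on $P'$ orbits, so any such invariant vanishing on $e+V$ vanishes on all of $P'(e+V)$; by the Proposition of \ref{5.1} this set is dense in $\mathfrak m$, and hence the invariant is identically zero. This is exactly the argument already indicated in \ref{1.2.3}. Next I would carry out the dimension bookkeeping. By Lemma \ref{4.2.5}(ii) the $g$ Benlolo--Sanderson invariants are algebraically independent, so $\mathbb C[\mathfrak m]^{\mathfrak p'}$ has GK dimension at least $g$; on the other hand injectivity embeds it into $\mathbb C[e+V]$, a polynomial ring in $\dim V=g$ variables, forcing GK dimension at most $g$. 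Therefore $\mathbb C[\mathfrak m]^{\mathfrak p'}$ has GK dimension exactly $g$. Since by \ref{2.2.3} it is already known to be a polynomial algebra, its number of generators equals its GK dimension, namely $g$, and the Benlolo--Sanderson invariants constitute a full set of generators.

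Finally I would identify $g$ with the number of hypersurface orbital varieties by invoking Lemma \ref{2.3.4}, which equates the number of polynomial generators of $\mathbb C[\mathfrak m]^{\mathfrak p'}$ with the number of hypersurface orbital varieties in $\mathfrak m$; this yields the corollary. The point worth stressing is that the count $g$ is now imposed by the explicit section $e+V$, so no appeal to Melnikov's enumeration (\cite{JM}, Thm.~2.19) of hypersurface orbital varieties is made. I do not expect any serious obstacle at this stage: the genuinely hard work---the separation result of Proposition \ref{4.3.5} and the density of $P'(e+V)$---has already been discharged, and what remains is the transcendence-degree bookkeeping above, whose only real content is the insistence on injectivity so as to rule out the a priori possibility of generators beyond the $g$ exhibited.
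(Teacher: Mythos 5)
Your proposal is correct and follows essentially the same route as the paper: the GK-dimension sandwich (algebraic independence of the $g$ Benlolo--Sanderson invariants via Lemma 4.2.5(ii), the bound from embedding into $\mathbb C[e+V]$) is exactly the discussion in 5.1, injectivity comes from the density Proposition just as there, polynomiality from 2.2, and the identification with hypersurface orbital varieties passes through Lemma 2.3.4, so that only Melnikov's enumeration is avoided. One caveat: your parenthetical claim that the Benlolo--Sanderson invariants themselves form a full generating set does not follow from the count alone (algebraically independent elements of a polynomial ring need not generate it, e.g. $x^2$ in $\mathbb C[x]$); the paper asserts this only in the Remark following the corollary, with the real justification coming from the irreducibility argument of 5.3, and nothing in the corollary itself requires it.
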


\textbf{Remark.} The generators are the Benlolo-Sanderson invariants described in \cite {BS} and recalled in \ref {4.1.6}.

 \subsection {Irreducibility of the Benlolo-Sanderson Invariants.}\label{5.3}

 A delicate point in \cite {BS} was to show that the Benlolo-Sanderson invariants are irreducible.  Indeed this can fail if we take the corresponding minor between two columns $C,C'$ of equal height which are not neighbouring, the resulting invariant being just the product of the invariants defined by the successive neighbouring columns between $C,C'$.

 In \cite {JM}, three proofs of irreducibility were given.  Two of these proofs relied on the irreducibility of the associated variety of a simple highest weight module with integral highest weight in type $A$.  Recently G. Williamson \cite {W} has shown that this can fail. The third proof was purely combinatorial but arduous.

 %Assume that we have completed step three as described in \ref {4.2.4}, and in particular that properties $P_1,P_2$ hold.  Then we obtain the

 \begin {cor}  The Benlolo-Sanderson invariants are irreducible.
 \end {cor}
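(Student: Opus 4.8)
The plan is to prove irreducibility by transporting the question, via the Weierstrass section produced in Step $3$, into the polynomial ring $\mathbb C[V]$, where irreducibility of a single coordinate is trivial. First I would record that the Step $3$ construction of \ref{4.4} yields a pair $e+V$ satisfying both $(P_1)$ and $(P_2)$ (\ref{4.4.12}). By \ref{4.2.5}(ii) the image of the restriction map $\varphi\colon S(\mathfrak m^*)^{\mathfrak p'}\to \mathbb C[V]$ has the same fraction field as $\mathbb C[V]$, of transcendence degree $g$; since \ref{5.2.6} shows $S(\mathfrak m^*)^{\mathfrak p'}$ is a domain of transcendence degree $g$, the kernel of $\varphi$ must be zero, so $\varphi$ is injective, and by \ref{4.2.5}(iii) it is surjective. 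Hence $\varphi$ is an isomorphism and $e+V$ is a genuine Weierstrass section.

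Next I would fix, by \ref{2.2.2}, a generating set $g_1,\dots,g_g$ of $S(\mathfrak m^*)^{\mathfrak p'}$ consisting of irreducible weight vectors of $\mathbb C[\mathfrak m]$; these are unique up to scalars since the algebra is polynomial and (by \ref{2.2.1}) multiplicity-free. Let $y_1,\dots,y_g$ be the coordinate functions on $V$ dual to the basis given by the $0$-lines. Because $\varphi$ is an isomorphism, the $\varphi^{-1}(y_i)$ form a generating set as well (\ref{1.2.5}), and by the uniqueness just noted they coincide with the $g_i$ up to scalar and reindexing; so I may assume $\varphi(g_i)=y_i$.

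The heart of the argument is then the following. Fix neighbouring columns $C_v,C_{v'}$ of height $s$ and let $\gr M_{v,v'}$ be the associated Benlolo-Sanderson invariant. Being a $\mathfrak p'$-invariant weight vector it equals $F(g_1,\dots,g_g)$ for a unique polynomial $F$. Property $(P_2)$ (\ref{4.4.12}) guarantees exactly one $0$-line among the disjoint composite lines joining $C_v$ to $C_{v'}$, so by \ref{4.2.5}(i) its restriction is the single coordinate $\varphi(\gr M_{v,v'})=c\,y_k$ attached to that line (with $c$ a nonzero scalar). Applying $\varphi$ to $\gr M_{v,v'}=F(g_1,\dots,g_g)$ gives $F(y_1,\dots,y_g)=c\,y_k$ in $\mathbb C[V]$; as the $y_i$ are algebraically independent this forces $F$ to be $c$ times the $k$-th coordinate projection, whence $\gr M_{v,v'}=c\,g_k$. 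Since $g_k$ is irreducible in $\mathbb C[\mathfrak m]$, so is $\gr M_{v,v'}$, and as the pair of neighbouring columns was arbitrary, every Benlolo-Sanderson invariant is irreducible.

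The step I expect to be the crux is the passage from ``restricts to a single coordinate function'' to ``equals a single generator''. A priori $\gr M_{v,v'}$ could be any monomial $\prod_i g_i^{a_i}$, and algebraic independence of the Benlolo-Sanderson invariants by itself does not exclude powers such as $g_i^{2}$; it is exactly the isomorphism $\varphi$ together with $(P_2)$, which makes the restriction a squarefree degree-one monomial in the $y_i$, that pins the exponent vector down to a single $1$. Accordingly the one point I would take care to verify is that $(P_2)$ holds for neighbouring columns of \emph{every} height, not merely the maximal one; this is precisely what \ref{4.4.12} supplies, and it is the reason the Step $3$ construction, rather than the Step $2$ labelling of \ref{4.2.2}, is needed here.
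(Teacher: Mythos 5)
Your strategy coincides with the paper's up to the point of producing the isomorphism $\varphi$ (your first paragraph is essentially the paper's Main Theorem in Section 6 together with Corollary 5.2.6), but the step you yourself flag as the crux contains a genuine gap, and the justification offered for it is circular. The assertion that ``the $\varphi^{-1}(y_i)$ coincide with the $g_i$ up to scalar and reindexing'' cannot be deduced from the uniqueness of the irreducible weight-vector generators: that uniqueness, which comes from multiplicity-freeness (2.2.1--2.2.2), applies only to elements already known to be $\mathfrak h$-weight vectors, and $\varphi^{-1}(y_i)$ is not known to be one. Indeed $\varphi$ is restriction to the translate $\mathbf 1+e+V$, which is not $\mathfrak h$-stable, so $\varphi$ does not intertwine the $\mathfrak h$-action and preimages of weight vectors need not be weight vectors; moreover a generating set of a polynomial algebra is far from unique up to scalars (for instance $\{g_1,\,g_2+g_1^2\}$ generates $\mathbb C[g_1,g_2]$), so ``forms a generating set'' by itself gives nothing. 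The only way to see that $\varphi^{-1}(y_k)$ is a weight vector is to observe that, by Lemma 4.2.5(i) and $(P_2)$, it is up to scalar the Benlolo--Sanderson invariant attached to the corresponding pair of neighbouring columns; but then the claim that $\varphi^{-1}(y_k)$ equals $g_k$ up to scalar is literally the corollary to be proven, so the computation $F(y_1,\ldots,y_g)=c\,y_k$ that you build on top of it assumes its own conclusion. What multiplicity-freeness actually gives is only that $\gr M_{v,v'}$ is a scalar times a single monomial $\prod_j g_j^{a_j}$; the entire content of the corollary is to exclude $\sum_j a_j\geq 2$, and nothing in your argument does this independently.

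The missing step can be supplied in two short ways. The paper's own proof: since $\mathfrak p'$ acts by locally nilpotent derivations, any factor of a $\mathfrak p'$-invariant is again invariant; hence a factorization $\gr M_{v,v'}=fh$ in $\mathbb C[\mathfrak m]$ already takes place inside $\mathbb C[\mathfrak m]^{\mathfrak p'}$, and applying $\varphi$ gives $\varphi(f)\varphi(h)=c\,y_k$. If $f$ is nonconstant then so is $\varphi(f)$ (were $\varphi(f)=c'$ constant, then $f-c'$ would lie in $\ker\varphi=0$), and likewise for $h$; irreducibility of the coordinate function $y_k$ in $\mathbb C[V]$ then forces one factor to be constant. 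Alternatively, staying within your framework: write $\gr M_{v,v'}=d\prod_j g_j^{a_j}$ by multiplicity-freeness, apply $\varphi$ to get $c\,y_k=d\prod_j\varphi(g_j)^{a_j}$ with each $\varphi(g_j)$ nonconstant, and conclude $\sum_j a_j=1$ since $\mathbb C[V]$ is a unique factorization domain and $y_k$ is prime there. Either repair dispenses entirely with the identification $\varphi(g_i)=y_i$; that identification is a consequence of the corollary, not an available ingredient.
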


 \begin {proof}
 Let us recall if $D$ is a domain over a field of characteristic zero and $A$ an algebra generated by nilpotent derivations of $D$, then a factor of an element of $D^A$ again lies in $D^A$.  In particular the factors of a Benlolo-Sanderson invariant again lie in  $\mathbb C[\mathfrak m]^{\mathfrak p'}$.

 By Lemma \ref {4.2.5}(iii) and our assumption, the restriction map $\varphi$ sends a Benlolo-Sanderson invariant to a co-ordinate function on $V$, hence it is injective on the algebra they generate.  By Corollary \ref {5.2.6} this algebra is the \textbf{whole} invariant ring $\mathbb C[\mathfrak m]^{\mathfrak p'}$.  Thus the restriction map is an algebra isomorphism.  Then a  Benlolo-Sanderson invariant, having image a co-ordinate function, must be irreducible.
 \end {proof}

  \subsection {The Nilfibre.}\label{5.4}

  \subsubsection {} \label{5.4.1}
  Obviously $\mathbb C[\mathfrak m]^{\mathfrak p'}$ inherits the gradation  of $\mathbb C[\mathfrak m]$ defined by degree of homogeneous polynomials. Let $\mathbb C[\mathfrak m]^{\mathfrak p'}_+$ denote the augmentation of $\mathbb C[\mathfrak m]^{\mathfrak p'}$ (that it say the subspace spanned by homogeneous invariants of positive degree) let $\mathscr N$ denote its zero locus.  It is called the nilfibre for the action of $P'$ on $\mathfrak m$.

  Obviously $\mathscr N$ is $P$ invariant. A pleasant extension of Richardson's theorem would be that $\mathscr N$ admits a dense $P$ orbit (necessarily of codimension $g$).  However this would imply that $\mathscr N$ is irreducible: but this can fail.

   One can ask if a component $\mathscr C$ of $\mathscr N$ admits a dense $P$ orbit.  This would be of great interest.  Indeed let $e$ be a representative in such an orbit. Since $\mathscr N$ is a cone so is $\mathscr N$, so in particular $ac \in \mathscr C$ for every non-zero scalar $a$. Consequently there exists $p \in P$ such that $p.c=ac$ and hence an element $h \in \mathfrak p$ such that $h.c=c$.  Since $P$ is an algebraic group we can assume $h$ to be ad-semisimple without loss of generality and hence belong to a Cartan subalgebra, which we can assume to be our fixed Cartan subalgebra $\mathfrak h$.

   Now let $V$ be an $h$ stable complement to $\mathfrak p.e$ in $\mathfrak m$.  One can ask if $e+V$ is a Weierstrass section and if all Weierstrass sections so arise.

   \textbf{Example.}  Let $\mathscr T$ be defined by the array $(2,1,1,2)$.  Then $\mathbb C[\mathfrak m]^{\mathfrak p'}$ is generated by $x_{3,4}$ and the second Benlolo-Sanderson invariant which is an irreducible polynomial of degree $4$.  Mod $x_{3,4}$ it is a product of two $2\times 2$ minors.  Thus $\mathscr N$ is not irreducible but equidimensional of codimension $2$ in $\mathfrak m$.

  \subsubsection {} \label{5.4.2}

    By \ref {4.4}, we can assume that properties $P_1,P_2$ hold. Then by Lemma \ref {4.2.5} we can choose a linear subvariety $e+V$ of $\mathfrak m$, so that a Benlolo-Sanderson invariant restricts to a co-ordinate function on $V$.  Define $E,V$ as in \ref {5.2.4}.

   \begin {lemma}  One has $E \subset \mathscr N$.
   \end {lemma}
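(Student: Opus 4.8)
The plan is to reduce the statement to the vanishing of each Benlolo--Sanderson invariant on $E$, and then to exhibit, for every pair of neighbouring columns, a structural zero row or column in the associated minor.

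First I would invoke Corollary \ref{5.2.6}: since $\mathbb C[\mathfrak m]^{\mathfrak p'}$ is generated by the Benlolo--Sanderson invariants, the augmentation ideal $\mathbb C[\mathfrak m]^{\mathfrak p'}_+$ is generated by these invariants, so $\mathscr N$ is exactly their common zero locus. Hence it suffices to show that for every pair of neighbouring columns $C_v,C_{v'}$ of height $s$ the invariant $\gr M_{v,v'}$ vanishes on $E$. Recall from \ref{4.1.6} that $\gr M_{v,v'}$ is the top-degree part of the function $x\mapsto \det M^t_{v,v'}(\mathbf 1+x)$. Since $E$ is a linear subspace, it is enough to prove that this whole function vanishes on $\mathbf 1+E$: the top-degree component then vanishes on $E$ automatically, because the homogeneous components of a polynomial that is identically zero on a linear subspace each vanish separately on it.

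Second, I would locate the zero line, using the rightmost labelling adopted in \ref{4.4}. Let $q$ be the entry of the bottom box $C_{v'}\cap R_s$. Two observations make the column (or, after transposition, the row) of $M_{v,v'}$ indexed by $q$ identically zero on $\mathbf 1+E$. First, $q=n^{v'}+s$ is strictly larger than every row index of the minor, so $q$ is a pure column index and the identity translate $\mathbf 1$ contributes nothing to that column. Second, by property $(P_2)$ together with the disjointness in $(P_1)$, the box $C_{v'}\cap R_s$ lies on a single composite line whose unique incoming segment is the one $0$-line produced in \ref{4.4.6}; thus no drawn line labelled $1$ terminates at $q$ inside the region bounded by $C_v,C_{v'}$. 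Consequently every entry $x_{p,q}$ of that column is either the $0$-line or a coordinate that is not a drawn $1$-line, and all of these vanish on $E$. A matrix with a zero column has zero determinant, so $\det M^t_{v,v'}$ vanishes on $\mathbf 1+E$, whence $\gr M_{v,v'}|_E=0$.

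The main obstacle is resisting the temptation to argue by counting. One might hope to prove simply that the number of $1$-lines entering the minor is smaller than its degree $d_{v,v'}$; this fails, because $1$-lines lying on rows below $R_s$ inside the taller intermediate columns also appear as entries of $M_{v,v'}$ and can raise that count to $d_{v,v'}$ (for instance for the array $1,3,3,1$). The correct point is not \emph{how many} $1$-lines occur but that they can never be assembled into a single nonzero term: the $0$-line at the bottom box of $C_{v'}$ blocks completion of any permutation, which is precisely what the zero-column observation captures cleanly and uniformly, independently of whether one works with the horizontal lines of Step $2$ or the modified lines of Step $3$.
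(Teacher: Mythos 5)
Your first paragraph (reducing to the vanishing of each Benlolo--Sanderson invariant via Corollary \ref{5.2.6}, and passing from the inhomogeneous determinant on $\mathbf{1}+E$ to its top homogeneous component by restricting to a linear subspace) is sound, and it matches what the paper's proof uses implicitly. The gap is the structural claim in your second paragraph: that no line labelled $1$ terminates at $q$ (the entry of $C_{v'}\cap R_s$) from inside the region bounded by $C_v,C_{v'}$, so that the column of $M_{v,v'}$ indexed by $q$ vanishes identically on $\mathbf{1}+E$. This is true for the horizontal configuration of Step $2$, but it is false after Step $3$ --- which is precisely the configuration the lemma concerns, since the proof requires $(P_2)$. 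The inference is flawed because $(P_1)$ asserts uniqueness of the covering \emph{union} of composite lines for the pair $C_v,C_{v'}$; it does not say that $q$ has a unique incoming line (the paper stresses in \ref{4.2.4} that a box may carry several left-going lines). Lines created at a later stage for a \emph{taller} pair of neighbouring columns can end at $q$, carry a $1$, and start inside the region, hence at a row index belonging to $I$.

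Concretely, take the array $2,1,3,1,2$, so $C_1=\{1,2\}$, $C_2=\{3\}$, $C_3=\{4,5,6\}$, $C_4=\{7\}$, $C_5=\{8,9\}$, with neighbouring pairs $(C_2,C_4)$ of height $1$ and $(C_1,C_5)$ of height $2$. After Steps $1$ and $2$ (rightmost labelling) the $0$-lines are $\ell_{4,7}$ and $\ell_{5,9}$. At stage $2$ of Step $3$, processing $(C_1,C_5)$ gates $\ell_{4,7}$, deletes $\ell_{5,9}$, and following \ref{4.4.6} draws $\ell_{5,7}$ with label $1$ and $\ell_{4,9}$ with label $0$. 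Thus $E=\mathbb{C}x_{1,3}+\mathbb{C}x_{3,4}+\mathbb{C}x_{2,5}+\mathbb{C}x_{5,7}+\mathbb{C}x_{7,8}$ and $V=\mathbb{C}x_{4,7}+\mathbb{C}x_{4,9}$. For the pair $(C_2,C_4)$ one has $q=7$, $I=[3,6]$, $J=[4,7]$, and $x_{5,7}$ is an entry of the minor which is a $1$-line ending at $q$ from box $5\in C_3$, strictly inside the region. On $\mathbf{1}+E$ the minor becomes the matrix with rows $(a_{3,4},0,0,0)$, $(1,0,0,0)$, $(0,1,0,a_{5,7})$, $(0,0,1,0)$: it has no zero row and no zero column, and its determinant vanishes only because no permutation term can be completed (rows $3$ and $4$ compete for the single column $4$). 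That global obstruction, not a zero line of the matrix, is what the paper's proof isolates: by the uniqueness in $(P_1)$ exactly one monomial of the expansion survives on $\mathbf{1}+E+V$, and by $(P_2)$ that monomial contains exactly one factor which is a coordinate on $V$, hence it dies upon restriction to $E$. So your closing remark that the real issue is the impossibility of assembling a nonzero term is correct, but the zero-column mechanism you propose to certify it does not exist in general, and the proof as written does not go through.
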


   \begin {proof} Indeed by $(P_2)$ a Benlolo-Sanderson invariant restricted to $E+V$ has just one non-vanishing term in its standard expansion. All the factors of this term are elements coming from $E$, except one which comes from $V$. Hence this invariant vanishes on $E$.
   \end {proof}

   \textbf{Remark.}  The element $e$  given in the construction of \ref {4.2.4} does not generally give an element which generates a $P'$ orbit in $\mathfrak m$ of the maximal dimension $\dim \mathfrak m -g$.  Yet it might generate a $P$ orbit of this dimension in $\mathscr N$, though even this may fail.

  \textbf{ Example 1.}  Let $\mathscr T$ be given by the array $2,1,1,2$.  Here our construction gives $e=x_{1,3}+x_{2,4}+x_{4,5}, V=\mathbb Cx_{3,4}+\mathbb Cx_{3,6}$.  In this case $P'.e$ (resp. $P.e$) has codimension $3$ (resp. $2$) in $\mathfrak m$, whilst $g=2$.
  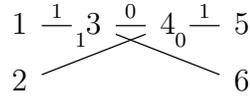
\begin{figure}[H]
 \begin{tikzcd}[row sep=tiny, 
column sep = 1 em]
1\arrow[-,r,"1"]&3\arrow[-,r,"0"]\arrow[-,rrd,"0"]&4\arrow[-,r,"1"]&5\\
2\arrow[-,rru,"1"]&&&6\\
\end{tikzcd}
\caption{Labeling of the line after step 3, the lines labeled  $0$ represent $V$ while the label lines labeled  $1$ represent $e$. }
\end{figure}

  \textbf{Example 2.}  Let $\mathscr T$ be given by the array $1,2,2,1$.  Here our construction gives $e=x_{1,2}+x_{2,4}, V=\mathbb C x_{4,6} +\mathbb Cx_{3,5}$. In this case even $P.e$ has codimension $>2=g$.  This may be remedied by adding the element $x_{5,6}$.  Then $E:=\mathbb C x_{1,2}+\mathbb C x_{2,4}+\mathbb C x_{5,6}$.  In this case $E$ still lie in $\mathscr N$, whilst $P.(e+x_{3,5})$ has codimension $2$ in $\mathfrak m$. In this case the Benlolo-Sanderson invariants are
  $$x_{2,4}x_{3,5}-x_{2,5}x_{3,4}, \quad
  x_{1,2}x_{2,4}x_{4,6}+x_{1,3}x_{3,4}x_{4,6}+
  x_{1,2}x_{2,5}x_{5,6}+x_{1,3}x_{3,5}x_{5,6}.$$

  In this case we believe $\mathscr N$ to be irreducible.
  
  \begin{figure}[H]
\begin{tikzcd}[row sep=tiny, 
column sep = 1 em]
1\arrow[-,r,"1"]&2\arrow[-,r,"1"]&4\arrow[-,r,"0"]&6\\
&3\arrow[-,r,"0"]&5\arrow[-,ru]&\\
\end{tikzcd}
\caption{Labeling of the line after step 3, the lines labeled  $0$ represent $V$ and the rest of the lines represent $E$ }
\end{figure}

  \section {The Main Theorem}\label {6}

  \subsection {}\label {6.1}

  Let $\mathfrak g$ be a simple Lie algebra of type $A_{n-1}$  .Fix a parabolic subalgebra $\mathfrak p_{\pi'}$ of $\mathfrak g$, $\mathfrak p_{\pi'}'$ it derived algebra  and $\mathfrak m_{\pi'}$ be its nilradical.  Let $P'_{\pi'}$ be the connected algebraic subgroup of $G$ with Lie algebra  $\mathfrak p_{\pi'}'$.

  \subsection {}\label {6.1}

  Define the linear subvariety $e+V$ of $\mathfrak m_{\pi'}$ as in \ref {4.2} satisfying $(P_1),(P_2)$.  Its existence was established in \ref {4.4}.  Recall the definition of a Weierstrass section given in \ref {1.1}.

  \begin {thm}  $e+V$ is a Weierstrass section for the action of $P'_{\pi'}$ on $\mathfrak m_{\pi'}$.
  \end {thm}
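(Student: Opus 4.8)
The plan is to verify directly that the restriction morphism $\varphi\colon \mathbb C[\mathfrak m_{\pi'}]^{P'_{\pi'}}\to \mathbb C[e+V]$ of \ref{1.1} is an algebra isomorphism, by reducing the claim to a bijective matching of two systems of polynomial generators. Since $P'_{\pi'}$ is connected one has $\mathbb C[\mathfrak m_{\pi'}]^{P'_{\pi'}}=S(\mathfrak m_{\pi'}^*)^{\mathfrak p'_{\pi'}}$, and by Corollary \ref{5.2.6} the latter is a polynomial algebra freely generated by the $g$ Benlolo--Sanderson invariants $\gr M_{v,v'}$, one for each pair of neighbouring columns of $\mathscr D$. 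On the other side $\mathbb C[e+V]$ is the polynomial algebra on $V^*$, and by the construction of \ref{4.2.2}--\ref{4.4} the space $V$ is spanned by the root vectors attached to the lines carrying the label $0$, whose number equals the number of neighbouring column pairs, namely $g$. Thus $\dim V=g$ and it suffices to show that $\varphi$ carries the $g$ generators $\gr M_{v,v'}$ bijectively onto the $g$ coordinate functions on $V$.

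First I would record that the subvariety $e+V$ produced by Step $3$ satisfies both $(P_1)$ and $(P_2)$; this is exactly \ref{4.4.12}, which rests on Proposition \ref{4.4.11}. With $(P_1),(P_2)$ available, Lemma \ref{4.2.5} applies to each pair of neighbouring columns $C_v,C_{v'}$ of height $s$. Part (i) expresses the evaluation of $M^t_{v,v'}$ as the product of the coordinate functions attached to the $0$-lines lying on the unique disjoint union of composite lines joining $C_v$ to $C_{v'}$, the gated and removed lines playing no role by Proposition \ref{4.4.11}; property $(P_2)$ then says there is exactly one such $0$-line, so the evaluation is a single coordinate function. As the distinguished $0$-line introduced at stage $s$ for this pair is different for different pairs, and as these $g$ distinguished lines exhaust the $0$-lines and hence furnish a basis of $V$, the generators $\gr M_{v,v'}$ are sent bijectively to the coordinate basis of $V^*$. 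By part (iii) this shows the image of $\varphi$ is all of $\mathbb C[V]=\mathbb C[e+V]$.

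Because $\varphi$ maps a system of free polynomial generators of $\mathbb C[\mathfrak m_{\pi'}]^{P'_{\pi'}}$ bijectively onto a system of free polynomial generators of $\mathbb C[e+V]$, it is an isomorphism; equivalently it is a surjection between domains of Krull dimension $g$, so its kernel is a prime ideal with $g$-dimensional quotient and therefore vanishes. (One could also deduce injectivity geometrically, an invariant vanishing on $e+V$ then vanishing on the dense set $P'_{\pi'}(e+V)$, in the spirit of Proposition \ref{5.1}.) Hence $\varphi$ is an isomorphism and $e+V$ is a Weierstrass section, as asserted. A technical point to keep honest throughout is that Lemma \ref{4.2.5} is phrased on the ``identity translate'' $\mathbf 1+e+V$ of \ref{4.1.6}, which must be matched with the restriction map on $e+V$ itself; this is the same identification already used without incident in \ref{5.3}.

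The step I expect to carry the real weight is the appeal to \ref{4.4.12}: everything hinges on the combinatorial fact that, after the gating, deletion and rejoining of Step $3$, the disjoint composite lines between every pair of neighbouring columns of each height retain \emph{exactly one} $0$-line while still passing through every intervening box. It is precisely this property, guaranteed by Proposition \ref{4.4.11}, that forces Lemma \ref{4.2.5}(i) to collapse each minor to a genuine coordinate function rather than to a product of several, or to a higher degree expression, and it is the only place where the delicate combinatorics of the construction, as opposed to the soft polynomiality input of Corollary \ref{5.2.6}, is genuinely needed.
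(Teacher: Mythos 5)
Your proposal is correct and follows essentially the same route as the paper's own (very terse) proof: surjectivity of $\varphi$ from Lemma \ref{4.2.5}(iii) once $(P_1),(P_2)$ are secured by Step $3$ (\ref{4.4.11}--\ref{4.4.12}), and injectivity from Corollary \ref{5.2.6}, since the free generators (the Benlolo--Sanderson invariants) are sent to algebraically independent coordinate functions on $V$. Your write-up merely makes explicit what the paper compresses into two lines (where, incidentally, the first occurrence of ``injectivity'' in the paper's proof should read ``surjectivity''), including the dimension count $\dim V=g$ and the identity-translate bookkeeping.
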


  \begin {proof}  Since $(P_1),(P_2)$, hold, injectivity of the restriction map $\varphi$ follows from Lemma \ref {4.2.5}(iii).  Then injectivity follows from Corollary \ref {5.2.6}.
  \end {proof}

 \section {Index of Notation }\label {6}

 Symbols used frequently are given below in the order in which they appear.

 \

 \ref {1}.    \quad  \quad    $\mathbb C, [1,n]$.

 \ref {1.1}.  \quad  $\varphi$.

 \ref {1.3.1}. $\mathscr N$.

 \ref {2.1}. \quad $P,G,\mathfrak g, \mathfrak p, \mathfrak r, \mathfrak m, \mathfrak h, \pi, \mathfrak p_{\pi'}, \mathfrak b, \mathfrak n, \mathfrak p', P'$.

 \ref {2.3.1}. $\mathscr O$.

 \ref {2.3.4}. $N_{\pi,\pi'}$.

 \ref {3.4.4}. $\Delta, \Delta^+, r_\mathscr V,p_\mathscr V$.

 \ref {3.4.5}. $\Pi',\Pi'_-$.

 \ref {4.1.1}. $\mathscr D, ht \mathscr D, C_v$.

 \ref {4.1.3}. $M_{v,v'}$.

 \ref {4.1.4}. $\mathscr T$.

 \ref {4.1.5}. $x_{i,j}$.

 \ref {4.1.6}. $M^t,d_{u,v}$.

 \ref {4.1.7}. $b<b'$.

 \ref {4.2.4}. $\ell_{b,b'},\varpi_{b,b'}$.

 \ref {4.4.1}. $R^i$.

 \ref {5.2.1}. $\mathscr H$.

\


\begin{thebibliography}{m}

\bibitem {B}  K. Baur,  Richardson elements for classical Lie algebras. J. Algebra 297 (2006), no. 1, 168–-185.

\bibitem {Bo}  N. Bourbaki, \'El\'ements de mathématique. Fasc. XXXIV. Groupes et algèbres de Lie. Chapitre IV: Groupes de Coxeter et systèmes de Tits. Chapitre V: Groupes engendr\'es par des r\'eflexions. Chapitre VI: syst\`emes de racines. (French) Actualit\'es Scientifiques et Industrielles, No. 1337 Hermann, Paris 1968.

 \bibitem {BS}  E. Benlolo, and Y. B. Sanderson, On the hypersurface orbital varieties of $\mathfrak {sl}(N,\mathbb C)$. J. Algebra 245 (2001), no. 1, 225–-246.
 \bibitem {BHRR} T. Br\"ustle, L. Hille, C. M. Ringel and G R\"ohrle, Gerhard The $\Delta$-filtered modules without self-extensions for the Auslander algebra of $k[T]/⟨Tn⟩$. Algebr. Represent. Theory 2 (1999), no. 3, 295-–312.
 \bibitem {D}.  J. Dixmier, Al\`ebres enveloppantes. (French) Cahiers Scientifiques, Fasc. XXXVII. Gauthier-Villars \'Editeur, Paris-Brussels-Montreal, Que., 1974.
 \bibitem {D1}  J. Dixmier, Sur les repr\'esentations unitaires des groupes de Lie nilpotents. IV. (French) Canadian J. Math. 11 (1959), 321-–344.
 \bibitem {FJ} F. Fauquant-Millet and A. Joseph,  Semi-centre de l'algèbre enveloppante d'une sous-algèbre parabolique d'une algèbre de Lie semi-simple. (French) [Semicenter of the enveloping algebra of a parabolic subalgebra of a semisimple Lie algebra] Ann. Sci. École Norm. Sup. (4) 38 (2005), no. 2, 155–-191.
 \bibitem {H} R. Hotta,  On Joseph's construction of Weyl group representations. Tohoku Math. J. (2) 36 (1984), no. 1, 49–-74.

 %\bibitem {FM}  L. Fresse, A. Melnikov, Smooth orbital varieties and orbital varieties with a dense B-orbit, International Mathematics Research  Notices IMRN 2013, no.5, (2013) pp. 1122–1203.
% \bibitem {H} Dz. Hadziev, Some questions in the theory of vector invariants, Mat. Sb. 72 (1967), 420--435. (Eng, Translation: Math. USSR Sb. 1 (1967), 383--396.
\bibitem {J0} A. Joseph, A preparation theorem for the prime spectrum of a semisimple Lie algebra. J. Algebra 48 (1977), no. 2, 241–-289.
\bibitem {J00} A. Joseph, On the variety of a highest weight module. J. Algebra 88 (1984), no. 1, 238–-278.
\bibitem {J000} A. Joseph, On the characteristic polynomials of orbital varieties. Ann. Sci. École Norm. Sup. (4) 22 (1989), no. 4, 569–-603.
\bibitem {J01} A.  Joseph,  Orbital varieties, Goldie rank polynomials and unitary highest weight modules. Algebraic and analytic methods in representation theory (Sønderborg, 1994), 53–-98, Perspect. Math., 17, Academic Press, San Diego, CA, 1997.
 \bibitem {J1}  A. Joseph,  Orbital varieties, Goldie rank polynomials and unitary highest weight modules. Algebraic and analytic methods in representation theory (S?nderborg, 1994), 53–-98, Perspect. Math., 17, Academic Press, San Diego, CA, 1997.
 \bibitem {J2} A. Joseph, Slices for biparabolic coadjoint actions in type $A$. J. Algebra 319 (2008), no. 12, 5060–-5100.
 \bibitem {J3}  A. Joseph,  An algebraic slice in the coadjoint space of the Borel and the Coxeter element. Adv. Math. 227 (2011), no. 1, 522–-585.
 \bibitem {J4}  A. Joseph, Parabolic actions in type A and their eigenslices. Transform. Groups 12 (2007), no. 3, 515–-547.
 \bibitem {J5} A. Joseph, Slices for biparabolic coadjoint actions in type $A$. J. Algebra 319 (2008), no. 12, 5060–-5100.
 \bibitem {J6} A.  Joseph, The integrality of an adapted pair. Transform. Groups 20 (2015), no. 3, 771–-816.

 \bibitem {JM} A. Joseph and A. Melnikov, Quantization of hypersurface orbital varieties in sln. The orbit method in geometry and physics (Marseille, 2000), 165–-196, Progr. Math., 213, Birkh\"auser Boston, Boston, MA, 2003.
 \bibitem {JS} A. Joseph and D. Shafrir, Polynomiality of invariants, unimodularity and adapted pairs. Transform. Groups 15 (2010), no. 4, 851–-882.

 \bibitem {K}  B. Kostant,  Lie group representations on polynomial rings. Amer. J. Math. 85 (1963), 327–-404.
 \bibitem {K1}  B. Kostant,  Center $U(\mathfrak n)$, cascade of orthogonal roots, and a construction of Lipsman-Wolf. Lie groups: structure, actions, and representations, 163–-173, Progr. Math., 306, Birkhäuser/Springer, New York, 2013.

 \bibitem {M1}  A.  Melnikov,   Robinson-Schensted procedure and combinatorial properties of geometric order sl(n). C. R. Acad. Sci. Paris S\'er. I Math. 315 (1992), no. 6, 709–-714.

 \bibitem {OV} A. I. Ooms and M. Van den Bergh, A degree inequality for Lie algebras with a regular Poisson semi-center. J. Algebra 323 (2010), no. 2, 305–-322.

 %\bibitem {M2}  A.  Melnikov, Irreducibility of the associated varieties of simple highest weight modules in sl(n). C. R. Acad. Sci. Paris S\'er. I Math. 316 (1993), no. 1, 53–-57.

% \bibitem {P}  A. N.  Panov, Invariants of the coadjoint action on the basic varieties of the unitriangular group. Transform. Groups 20 (2015), no. 1, 229–-246.

 \bibitem {Pe}  E. Perelman, Quantization of hypersurface orbital varieties in simple Lie algebras of classical types, Thesis, Weizmann Institute 2003.

 \bibitem {PV} V. L.  Popov and E. B. Vinberg, Some open problems in invariant theory. Proceedings of the International Conference on Algebra, Part 3 (Novosibirsk, 1989), 485–-497, Contemp. Math., 131, Part 3, Amer. Math. Soc., Providence, RI, 1992.
 \bibitem {R} M. Rosenlicht, Some basic theorems on algebraic groups.Amer. J. Math.78(1956), 401–-443.
 \bibitem {S} N. Spaltenstein,  On the fixed point set of a unipotent element on the variety of Borel subgroups. Topology 16 (1977), no. 2, 203–-204.

 \bibitem {S1}  V. Sevostyanova, The field of invariants for the adjoint action of the Borel group in the nilradical of a parabolic subalgebra, arXiv:1608.06096.

% \bibitem {S2}  V.  Sevostyanova, The algebra of invariants for the adjoint action of the unitriangular group, arXiv:1605.00800.

 %\bibitem {S2}   V. Sevostyanova, Invariants of the adjoint action in the nilradical of a parabolic subalgebraof types $B_n,C_n,D_n$, arXiv:1203.3001.

% \bibitem {S3}   V. Sevostyanova,

\bibitem {W} G.  Williamson, A reducible characteristic variety in type A. Representations of reductive groups, 517–-532, Progr. Math., 312, Birkhäuser/Springer, Cham, 2015.

     \end{thebibliography}
\end{document}